\newtheorem*{remark}{\bf Remark}
\newtheorem{theorem}{\bf Theorem}[section]
\newtheorem{definition}[theorem]{\bf Definition}
\newtheorem{Theorem}{\bf Theorem}
\newtheorem{lemma}[theorem]{\bf Lemma}
\def\C{{\mathbb C}}
\def\R{{\mathbb R}}
\def\ddc{\mathrm{dd}^c}
\title{H\"older estimates and uniformity in arithmetic dynamics}
\author{Thomas Gauthier}
\address{Laboratoire de Math\'ematiques d'Orsay, B\^atiment 307, Universit\'e Paris-Saclay, 91405 Orsay Cedex, France}
\email{thomas.gauthier@universite-paris-saclay.fr}
\thanks{The author is partially supported by the Institut Universitaire de France.}
\begin{document}

\maketitle

\begin{abstract}
In this note we study common preperiodic points of rational maps of the Riemann Sphere. We show that given any degrees $d_1,d_2\geq2$, outside a Zariski closed subset of the space of pairs of rational maps $(f,g)$ of degree $d_1$ and $d_2$ respectively, the maps $f$ and $g$ share at most a uniformly bounded number of common preperiodic points. This generalizes a result of DeMarco and Mavraki to maps of possibly different degrees. Our main contribution is the use of H\"older properties of the Green function of a rational map to obtain height estimates.
\end{abstract}

\section{Introduction}
In a recent work, DeMarco and Mavraki~\cite{DeMarco_Mavraki_2024} showed that for any $d\geq2$, there is a constant $C\geq1$, depending only on $d$, such that a general pair of degree $d$ rational maps share at most $C$ preperiodic points. This question was first addressed by DeMarco, Krieger and Ye~\cite{DKY1,DKY2} in the case of the Latt\`es family and the quadratic polynomial family, with a slightly different proof, even though they relied on the same tool: the Arakelov-Zhang pairing (see also ~\cite{mavraki-schmidt} and \cite{Poineau-BFT} for works on this question).
We want here to give a proof of a generalization to the case when the rational maps can have different degrees - even multiplicatively independent degrees - which follows more or less the approach oy DeMarco, Krieger and Ye using a	 new ingredient: the H\"older regularity of the canonical metric invariant by an endomorphism of the projective space.


For any integers $d\geq2$, we denote by $\mathrm{Rat}_d$ the space of all endomorphisms of degree $d$ of $\mathbb{P}^1$, i.e.  the space of all degree $d$ rational maps on $\mathbb{P}^1$. This is a smooth affine variety defined over $\mathbb{Q}$ which can be identified with $\mathbb{P}^{2d+1}\setminus\{\mathrm{Res}=0\}$, where $\mathrm{Res}$ is the homogeneous MacCaulay resultant (see,~e.g.,~\cite{BB1}). For a given complex rational map $f:\mathbb{P}^1\to\mathbb{P}^1$, we denote
\[\mathrm{Preper}(f):=\{z\in \mathbb{P}^1(\mathbb{C})\, : \ \text{there are} \ n>m, \ \text{s.t.} \ f^n(z)=f^m(z)\}.\]
The main result of the present paper is the following.
\begin{Theorem}\label{tm:uniform}
For any $d_1,d_2\geq2$, there is a dense Zariski open subset $V\subseteq\mathrm{Rat}_{d_1}\times \mathrm{Rat}_{d_2}$ and an integer $N\geq1$ such that for any $(f,g)\in V(\mathbb{C})$,
\[\#\, \mathrm{Preper}(f)\cap\mathrm{Preper}(g)\leq N.\]
\end{Theorem}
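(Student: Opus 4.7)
The plan is to adapt the Arakelov-Zhang pairing strategy of DeMarco-Krieger-Ye and DeMarco-Mavraki, replacing their degree-specific arguments by Hölder regularity estimates on the canonical Green function, which hold uniformly in families and allow the degrees $d_1$ and $d_2$ to be chosen independently. Throughout, I would spread out a given pair $(f,g)$ over a number field $K$ of definition and work in the adelic formalism with the local potentials $G_{f,v}$ and $G_{g,v}$ at each place $v$ of $K$.

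The core quantitative estimate I would establish is an inequality of the form
\[
  \langle f, g \rangle \;\leq\; \Psi(f,g) \cdot \#\bigl(\mathrm{Preper}(f)\cap \mathrm{Preper}(g)\bigr)^{-\alpha},
\]
where $\langle f,g\rangle$ denotes the Arakelov-Zhang pairing, $\alpha>0$ is the Hölder exponent supplied by the regularity of the invariant Green function, and $\Psi(f,g)$ is a product of local Hölder norms of $G_{f,v}$ and $G_{g,v}$ over all places. This should follow from the following direct argument. Let $F \subset \mathrm{Preper}(f)\cap\mathrm{Preper}(g)$ be finite and Galois-stable, and let $\delta_F$ be the associated empirical measure. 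At every place $v$, the identity $\int G_{f,v}\, d\delta_{F,v}=0$ holds because $F\subset \mathrm{Preper}(f)$, whereas $F\subset \mathrm{Preper}(g)$ forces effective equidistribution of $\delta_{F,v}$ to $\mu_{g,v}$ with rate governed by the Hölder norm of the test function $G_{f,v}$. Summing the resulting local bounds over all places of $K$ produces the displayed inequality.

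It then remains (a) to identify a proper Zariski closed subset $Z \subset \mathrm{Rat}_{d_1}\times \mathrm{Rat}_{d_2}$ outside of which $\langle f,g \rangle$ is bounded below away from zero, and (b) to cut $V$ out of the complement of $Z$ so that $\Psi(f,g)$ is also bounded above on $V$. Point (a) is built in, because $\langle f,g\rangle$ vanishes exactly when the canonical measures of $f$ and $g$ coincide at every place, a condition defining a proper Zariski closed locus, and a Northcott-type specialization argument gives a uniform positive lower bound on any fixed Zariski open of its complement. Point (b) is where I expect the main obstacle: controlling the local Hölder constants \emph{simultaneously at all places}, including non-archimedean places of potentially bad reduction, is delicate. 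This is exactly where the Hölder regularity of the canonical metric enters: viewed as a family statement over the parameter space $\mathrm{Rat}_{d_1}\times \mathrm{Rat}_{d_2}$, it should deliver continuous dependence of the local Hölder constants on the parameter and, combined with standard compactness, yield a uniform upper bound on $\Psi(f,g)$ on an explicit Zariski open subset $V$. Combining these three ingredients with the central inequality produces the uniform constant $N$.
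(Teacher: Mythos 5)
Your high-level plan correctly identifies the core tools -- Arakelov--Zhang pairing, H\"older regularity of the Green function, Favre--Rivera-Letelier style quantitative equidistribution -- and in that sense matches the paper's strategy. But several of your intermediate claims are wrong, and the most substantial part of the argument is missing.

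First, the claimed local identity $\int G_{f,v}\, d\delta_{F,v}=0$ for $F\subset\mathrm{Preper}(f)$ is false at any individual place $v$. What vanishes is the global canonical height $\hat h_f(x)=\sum_v(\ldots)$; the local contributions at individual places do not vanish, and in fact the local potential integrals over a Galois orbit are in general non-zero at every place. The paper's Theorem~\ref{tm:split} therefore keeps the term $\frac{1}{\#E}\sum_{x\in E}(\hat h_f(x)+\hat h_g(x))$, which only disappears \emph{globally} once one specializes $E$ to common preperiodic points.

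Second, your central estimate $\langle f,g\rangle\leq\Psi(f,g)\cdot\#E^{-\alpha}$ with the hope that $\Psi$ is bounded on a Zariski open set cannot work. The local H\"older constants of the canonical metric necessarily grow like $\log\|F\|_v$, so their adelic sum $\Psi(f,g)$ grows like $h_{\mathrm{Rat}_{d_1}}(f)+h_{\mathrm{Rat}_{d_2}}(g)$, and by Northcott this is unbounded on \emph{every} Zariski open subset. Compactness gives nothing here. The paper accepts this linear growth in Theorem~\ref{tm:split} and compensates for it with the \emph{fundamental height inequality} $\langle f,g\rangle\geq C_1 h_N(t)-C_2$ of Theorem~\ref{tm:equi}(1): because the pairing grows at least linearly in the height of the parameter, the upper and lower bounds can be played against each other (Lemma~\ref{lm:uniform-follows-from-ineg}). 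This trade-off is the actual mechanism that produces a uniform $N$, and it has no analogue in your plan.

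Third, your ``point (a) is built in'' dismissal hides the hardest input. The vanishing locus of $\langle f,g\rangle$ is not a priori Zariski closed (the pairing is an arithmetic quantity, and its vanishing is controlled by the arithmetic Hodge index theorem, not by a coincidence of complex measures at one place), and a ``Northcott-type specialization'' does not give a uniform positive lower bound. The paper must invoke Yuan--Zhang's theory of adelic line bundles over quasi-projective varieties, construct the bifurcation measure $\mu_{d_1,d_2}$ on the quotient $X_{d_1,d_2}=(\mathrm{Rat}_{d_1}\times\mathrm{Rat}_{d_2})/\mathrm{PGL}(2)$, prove its non-vanishing (via the classification of maps with the same maximal entropy measure, Lemma~\ref{lm:samemu}), and then run an equidistribution-and-contradiction argument to rule out Zariski dense small sequences. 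Working on the $\mathrm{PGL}(2)$-quotient -- which you never mention -- is essential: the pairing is $\mathrm{PGL}(2)$-invariant, so no height lower bound of the required form can hold on $\mathrm{Rat}_{d_1}\times\mathrm{Rat}_{d_2}$ itself. As written, your proposal has the right tools in hand but not the architecture that makes them fit together.
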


As written above, to show this result, we follow the approach of \cite{DKY1,DKY2}: the key ingredient is the \emph{Arakelov-Zhang pairing} of two rational maps $f,g:\mathbb{P}^1\to\mathbb{P}^1$ defined over a number field $\mathbb{K}$: this pairing can be defined as
\[ \langle f,g\rangle :=-\frac{1}{2[\mathbb{K}:\mathbb{Q}]}\left(\bar{L}_{f}-\bar{L}_{g}\right)^2,\]
where $\bar{L}_{f}$ (resp. $\bar{L}_{g}$) is the adelic semi-positive continuous line bundle associated with $f$. By the arithmetic Hodge Index Theorem of Yuan and Zhang~\cite{YZ-Hodge}, we have $\langle f,g\rangle \geq0$ and $\langle  f,g\rangle=0$ if and only if $\bar{L}_{f}=\bar{L}_{g}$ (see Section~\ref{sec:energy} for more derails).

The proof of this result goes in two main steps: the first one relies on Yuan and Zhang's theory of adelic line bundles on quasi-projective varieties~\cite{YZ-adelic} and the non-vanishing of an appropriate measure induced by the pairing on the quotient space $(\mathrm{Rat}_{d_1}\times \mathrm{Rat}_{d_2})/\mathrm{PGL}(2)$. This non-vanishing property has been established for the case $d_1=d_2$ by DeMarco and Mavraki~\cite{DeMarco_Mavraki_2024} and the case when the $d_1$ and $d_2$ are multiplicatively dependent follows easily. The case when $d_1$ and $d_2$ are multiplicatively independent actually follows easily from properties of the local archimedean contribution of the Arakelov-Zhang pairing (see Section~\ref{sec:energy} for more details).

The second step is the main new ingredient we give here, and is directly inspired from an energy inequality relating the pairing of two maps $f$ and $g$ and the  height of a finite set of points for the height function $\hat{h}_f+\hat{h}_g$, when $f$ and $g$ are defined over a number field. Here, the canonical height function, as a function on the $\bar{\mathbb{Q}}$-points, is defined as
\[\hat{h}_f=\lim_{n\to\infty}\frac{1}{d^n}h_{\mathrm{nv}}\circ f^n \quad \text{on} \ \mathbb{P}^1(\bar{\mathbb{Q}}).\]
To any endomorphism $f:\mathbb{P}^1\to\mathbb{P}^1$ defined over a field $K$, we can associate a polynomial homogeneous lift $F:\mathbb{A}^{2}_K\to\mathbb{A}^{2}_K$ and two such lifts $F,F'$ necessarily satisfy $F=\alpha F'$ with $\alpha\in \bar{K}^\times$. If $(a_0,\ldots,a_{N})$ is the collection of coefficients of a lift $F$ of the map $f$, one can identify $f$ with $[a_0:\cdots:a_{2d+1}]\in \mathbb{P}^{2d+1}\setminus\{\mathrm{Res}=0\}$. 
If $f$ is defined over a number field, let
\[h_{\mathrm{Rat}_d}(f):=\sum_{v\in M_\mathbb{K}}\frac{N_v}{[\mathbb{K}:\mathbb{Q}]}\log\|F\|_v,\]
where $F$ is any lift of $f$ defined over a number field $\mathbb{K}$ and $\|F\|_v=\max\{|a_0|_v,\ldots,|a_{2d+1}|_v\}$.
By the product formula, the quantity $h_{\mathrm{Rat}_d}(f)$ is well-defined.
%
%

Inspired by \cite[Theorem~1.8]{DKY1} and \cite[Theorem~1.9]{DKY2}, we prove the following.
\begin{Theorem}\label{tm:split}
For any $d_1, d_2\geq2$, there is a constant $A\geq1$ depending only on $d_1$ and $d_2$ such that for any rational maps $(f,g)\in\mathrm{Rat}_{d_1}\times\mathrm{Rat}_{d_2}(\bar{\mathbb{Q}})$, for any $0<\delta<1$, and any finite Galois invariant subset $E\subset \mathbb{A}^1(\bar{\mathbb{Q}})$,
\[\langle f,g\rangle\leq A\left(\delta-\frac{\log(\delta)}{\# E}\right)\left(h_{\mathrm{Rat}_{d_1}}(f)+h_{\mathrm{Rat}_{d_2}}(g)+1\right)+\frac{1}{\# E}\sum_{x\in E}(\hat{h}_{f}(x)+\hat{h}_{g}(x)).\]
\end{Theorem}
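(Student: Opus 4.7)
The plan is to express both $\langle f,g\rangle$ and the average $\frac{1}{\# E}\sum_{x\in E}(\hat h_f(x)+\hat h_g(x))$ as sums of local integrals indexed by the places $v\in M_\mathbb{K}$ of a number field of definition for $f$, $g$ and $E$, and then to bound each local discrepancy using the H\"older regularity of the associated dynamical Green functions. For lifts $F,G$ of $f,g$, let $G_{f,v}(z):=\lim_n d_1^{-n}\log\|F^n(z)\|_v$ and similarly $G_{g,v}$ denote the dynamical Green functions on $\mathbb{A}^{2,\mathrm{an}}_v\setminus\{0\}$; these descend to potentials $g_{f,v},g_{g,v}$ of the equilibrium measures $\mu_{f,v},\mu_{g,v}$ on $\mathbb{P}^{1,\mathrm{an}}_v$, and one has $\hat h_f(x)=\sum_v\frac{N_v}{[\mathbb{K}:\mathbb{Q}]}G_{f,v}(\widetilde x)$ for any lift $\widetilde x$ of $x$. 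The arithmetic intersection formula applied to the adelic semi-positive line bundles $\bar L_f,\bar L_g$ then gives
\[\langle f,g\rangle=\sum_{v\in M_\mathbb{K}}\frac{N_v}{[\mathbb{K}:\mathbb{Q}]}\int_{\mathbb{P}^{1,\mathrm{an}}_v}(g_{f,v}-g_{g,v})\,d(\mu_{g,v}-\mu_{f,v}).\]

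Setting $\nu_E:=\frac{1}{\# E}\sum_{x\in E}\delta_x$ and subtracting the local decomposition of the average height integrated against $\nu_E$, the lift-dependent pieces cancel by the product formula, yielding
\[\langle f,g\rangle-\frac{1}{\# E}\sum_{x\in E}(\hat h_f+\hat h_g)(x)=\sum_{v\in M_\mathbb{K}}\frac{N_v}{[\mathbb{K}:\mathbb{Q}]}J_v(E),\]
where each $J_v(E)$ is a signed-measure integral of $\phi_v:=g_{f,v}-g_{g,v}$ against a combination of $\nu_E$ and $\mu_{f,v},\mu_{g,v}$. After symmetrization the problem reduces to an upper bound on each $-J_v(E)$, i.e.\ on a discrepancy of the form $\int\phi_v\,d(\nu_E-\mu_{\ast,v})$, in terms of $\delta$, $\# E$ and norms of $\phi_v$.

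The central analytic step is then an interpolation between an $L^\infty$ truncation estimate (contributing the $\delta$-term) and a logarithmic energy estimate (contributing the $-\log\delta/\# E$-term). The essential regularity input is that $\phi_v$ is H\"older continuous on $\mathbb{P}^{1,\mathrm{an}}_v$ with exponent $\alpha$ depending only on $d_1,d_2$, and with H\"older constant and sup norm both bounded by $C(\log\|F\|_v+\log\|G\|_v+1)$. I would regularize $\phi_v$ at scale $\delta$ as $\phi_{v,\delta}$ with
\[\|\phi_v-\phi_{v,\delta}\|_\infty\lesssim \delta\cdot(\log\|F\|_v+\log\|G\|_v+1)\]
and with $\ddc\phi_{v,\delta}$ of total variation controlled similarly. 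The $L^\infty$ part of the discrepancy is then bounded by the first term, while for the smooth part $\phi_{v,\delta}$ an integration by parts expresses $\int\phi_{v,\delta}\,d(\nu_E-\mu_{\ast,v})$ as a logarithmic energy of $\nu_E$ against $\mu_{\ast,v}$, bounded by $(|\log\delta|/\# E)\cdot(\log\|F\|_v+\log\|G\|_v+1)$ via a Fekete--Szeg\H{o}-type inequality for Galois-invariant $E$.

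Summing over all places and invoking the identity $\sum_v\frac{N_v}{[\mathbb{K}:\mathbb{Q}]}\log\|F\|_v=h_{\mathrm{Rat}_{d_1}}(f)$ (and similarly for $g$) produces the desired inequality, since only finitely many $v$ give non-trivial contributions (the archimedean places together with the places of bad reduction for $f$ or $g$, determined by $\mathrm{Res}(F),\mathrm{Res}(G)$). The main obstacle is the uniform H\"older estimate: one needs a common exponent $\alpha=\alpha(d_1,d_2)>0$ and H\"older/sup-norm constants depending at most linearly on $\log\|F\|_v,\log\|G\|_v$, uniformly in $v$. At archimedean places this is provided by the classical theory of Green currents going back to Briend--Duval and Dinh--Sibony, and at non-archimedean places by the potential theory on the Berkovich projective line due to Favre--Rivera-Letelier; making these dependencies quantitatively explicit across all places is the technical heart of the argument and the new ingredient referred to in the introduction.
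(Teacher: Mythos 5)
Your proposal shares the broad architecture of the paper's argument (decompose place-by-place, exploit H\"older regularity of the Green functions, trade a regularization error of size $\delta$ against a $-\log\delta/\# E$ energy term, then sum using the product formula and the height definitions of $h_{\mathrm{Rat}_{d_i}}$). The explicit $v$-uniform H\"older constants in terms of $\log\|F\|_v$, $\log\|G\|_v$ and the resultants is indeed the technical heart, and you identify it correctly. However, the central step is described in a way that does not close, and a key ingredient is missing.

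You regularize the \emph{potential} $\phi_v$ at scale $\delta$ and then try to bound the smooth part $\int\phi_{v,\delta}\,d(\nu_E-\mu_{\ast,v})$ by ``a logarithmic energy of $\nu_E$ against $\mu_{\ast,v}$'' via a Fekete--Szeg\H{o}-type inequality. This cross-energy has no logarithmic divergence in $\delta$; the $-\log\delta/\# E$ term necessarily comes from the \emph{regularized self-energy} of $\nu_E$, not from a cross-energy. In the paper this is made precise by regularizing the atomic measure $\nu_E$ itself (the metric $\bar L_\varepsilon$ built from $g_{\varepsilon,v}(z)=\frac{1}{\# E}\sum_x\log\max\{|z-x|_v,\varepsilon_v\}$), computing its self-energy $(\bar L_\varepsilon)^2\geq\frac{1}{\# E}\sum_v N_v\log\varepsilon_v$ (Lemma~\ref{lm:energy}, using Favre--Rivera-Letelier and Fili, with the product formula killing the off-diagonal terms $\sum_{x\neq y}\log|x-y|_v$), and crucially invoking the \emph{arithmetic Hodge Index Theorem} (Theorem~\ref{tm:Hodge}): $(\bar L-\bar L_\varepsilon)^2\leq0$, i.e.\ $(\bar L)^2\leq 2(\bar L\cdot\bar L_\varepsilon)-(\bar L_\varepsilon)^2$. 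Without this (or its local avatar, the positivity of the Favre--Rivera-Letelier energy pairing applied globally), there is no inequality connecting the quadratic quantity $\langle f,g\rangle$ with the linear quantity $\frac{1}{\# E}\sum(\hat h_f+\hat h_g)(x)$; the individual place-by-place discrepancies $J_v(E)$ you introduce are not bounded on their own. The ``integration by parts'' step as written also fails because the potential of $\nu_E-\mu_{\ast,v}$ has logarithmic singularities that your regularization of $\phi_v$ does not remove, and the paper avoids this entirely by working with $(\bar L\cdot\bar L_\varepsilon)$, whose curvature $c_1(\bar L_\varepsilon)_v=\frac{1}{\# E}\sum_x\delta_{\zeta_{x,\varepsilon_v}}$ is a genuine measure to which the H\"older bound can be applied directly (inequality~\eqref{ineg:place-by-place}). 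To repair your proposal you should: (i) regularize $\nu_E$ rather than $\phi_v$; (ii) insert the arithmetic Hodge Index step; and (iii) choose the adelic radius $\varepsilon_v=\delta^{1/\alpha_v}$ place by place, as in Section~\ref{sec:split}.
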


The proof of this result relies on a precise analysis of the adelically H\"older regularity properties of the invariant metric for a given rational map, as well as its dependence on the parameter. We then use the approximation of measures distributed on finite Galois-invariant finite sets introduced by Favre and Rivera-Letelier~\cite{FRL} as in \cite{DKY1,DKY2} to conclude by choosing an appropriate adelic radius of approximation, see Section~\ref{sec:split}. Shang and Yap proved a variant of Theorem~\ref{tm:split} for polynomials as Theorem 3.18 in \cite{Shang-Yap}.

\subsection*{Acknowledgment}
The idea to use the H\"older regularity of the Green function of rational maps to obtain the energy estimate from Theorem~\ref{tm:split} comes from discussions with Gabriel Vigny. I would like to thank him for all the very useful discussion we had on this proof. I would also like to warmly thank the referee for very useful comments.

\section{Preliminaries}
\subsection{Basics over a metrized field}\label{sec:basics}
Let $(K,|\cdot|)$ be a complete and algebraically closed field of characteristic $0$. Let $k\geq1$ be an integer and $\pi:\mathbb{A}^{k+1}\setminus\{0\}\to\mathbb{P}^k$ be the canonical projection. For $X=(X_1,\ldots,X_{k+1})\in K^{k+1}$, we also let 
\[\|X\|:=\max_j|X_j|.\]
We also define a distance on $\mathbb{P}^k$ by letting
\[\mathrm{dist}(x,y):=\frac{\max_{1\leq i, j\leq k+1}|X_iY_j-X_jY_i|}{\|X\|\cdot\|Y\|},\]
for any $X,Y\in K^{k+1}\setminus\{0\}$ with $\pi(X)=x$ and $\pi(Y)=y$. For this distance, $\mathrm{diam}(\mathbb{P}^k)=1$.

\medskip

When $X$ is a projective variety over $K$, and $L$ is a line bundle over $X$, we denote by $X^\mathrm{an}$ (resp. $L^\mathrm{an}$) the Berkovich analytification of $X$ (resp. of $L$). A \emph{continuous metric}  on $L^\mathrm{an}$ is a the data for each local analytic section $\sigma$ defined over an open subset $U\subset X^\mathrm{an}$ of a continuous function $\|\sigma\|_U:U\to\mathbb{R}_+$ such that $\|\sigma\|_U$ vanishes only at zeroes of $\sigma$, the restriction of $\|\sigma\|_U$ to an open subset $V$ is $\|\sigma\|_V$ and for each analytic function $f$ on $U$, $\|f\sigma\|_U=|f|\times\|\sigma\|_U$.
When $(K,|\cdot|)$ is non-archimedean, we say that a metric is a \emph{model} metric if there is an $\mathscr{O}_K$-model $\mathscr{X}$ of $X$ $-$ i.e. an integral $\mathscr{O}_K$-scheme with generic fiber $\mathscr{X}_\eta$ isomorphic to $X$ and a line bundle $\mathscr{L}$ on $\mathscr{X}$ which restriction to the generic fiber isomorphic to $L$ $-$ such that the metric is on $L$ is induced by this model $(\mathscr{X},\mathscr{L})$ (see,~e.g.,~\cite[Chapter~1]{book-unlikely} for more details).

\subsection{Adelic metrizations over a number field}
For the material of this section, we refer to \cite{ACL2} and \cite{Zhang-positivity}. 
When $\mathbb{K}$ is a number field, let $M_\mathbb{K}$ be its set of places, i.e. the set of equivalence places of non-trivial norms on $\mathbb{K}$. For any $v\in M_\mathbb{K}$, denote by $|\cdot|_v$ the corresponding norm. We let $\mathbb{K}_v$ be the completion of $(\mathbb{K},|\cdot|_v)$, by $\bar{\mathbb{K}}_v$ the algebraic closure of $\mathbb{K}_v$, and finally by $\mathbb{C}_v$ the completion of $\bar{\mathbb{K}}_v$. We then denote by $\|\cdot\|_v$ the induced norm on the affine space $\mathbb{A}^k$ for any $k\geq1$ and by $\mathrm{dist}_v$ the induced distance on $\mathbb{P}^k_{\mathbb{C}_v}$.

Finally, we assume the norms $|\cdot|_v$ are normalized so that the product formula holds:
\[\prod_{v\in M_\mathbb{K}}|x|_v^{N_v}=1, \quad x\in \mathbb{K}^\times,\]
with $N_v=[\mathbb{K}_v:\mathbb{Q}_p]$, where $p$ is the residual characteristic of $(\mathbb{K},|\cdot|_v)$.

\medskip

We will use the following definitions in the whole text:
\begin{itemize}
\item  An \emph{adelic constant} (over $\mathbb{K}$) is a function $C:v\in M_\mathbb{K}\longmapsto C_v\in \mathbb{R}_+^*$ such that $C_v=1$ for all but finitely many $v\in M_\mathbb{K}$,
\item An adelic constant is \emph{small} if $0<C_v\leq1$, for all $v\in M_\mathbb{K}$.
\end{itemize}
We will denote by $C=\{C_v\}_{v\in M_\mathbb{K}}$ an adelic constant over $\mathbb{K}$.

\medskip

Let $X$ be a projective variety defined over a number field $\mathbb{K}$. Fix a place $v\in M_\mathbb{K}$. Denote by $X_v^\mathrm{an}$ the Berkovich analytification of $X$ at the place $v$.   Let $L$ be a line bundle on $X$, also defined over $\mathbb{K}$. An \emph{adelic metric} on $L$ is a collection of metrics $\{\|\cdot\|_v\}_{v\in M_\mathbb{K}}$ such that, for any $v\in M_\mathbb{K}$, $\|\cdot\|_v$ is a continuous metric on $L_v^\mathrm{an}$ and there exists an $\mathscr{O}_\mathbb{K}$-model $(\mathscr{X},\mathscr{L})$ of $(X,L)$ such that, for all but finitely many $v\in M_\mathbb{K}$, $\|\cdot\|_v$ is a model metric on $L_v^\mathrm{an}$ induced by $(\mathscr{X},\mathscr{L})$. Denote $\bar{L}:=(L,\{\|\cdot\|_v\}_{v\in M_\mathbb{K}})$. We say $\bar{L}$ is \emph{semi-positive} if for any $v\in M_\mathbb{K}$, in any local chart $U\subset X^\mathrm{an}_v$, the metric $\|\cdot\|_v$ can be written under the form $|\cdot|_ve^{-u_v}$, where $u_v$ is plurisubharmonic.
We say $\bar{L}$ is \emph{integrable} if it can be written as a difference of semi-positive adelic line bundles.
We also let $c_1(\bar{L})_v$ be the curvature form of the metric $\|\cdot\|_{v}$ on $X_v^\mathrm{an}$.

\subsection{Arithmetic intersection and heights}

Let $L_0,\ldots,L_k$ be $\mathbb{Q}$-line bundle on $X$. Assume $L_i$ is equipped with an adelic continuous metric $\{\|\cdot\|_{v,i}\}_{v\in M_\mathbb{K}}$ and we denote $\bar{L}_i:=(L_i,\{\|\cdot\|_v\}_{v\in M_\mathbb{K}})$. Assume $\bar{L}_i$ is  semi-positive for $1\leq i\leq k$ and $\bar{L}_0$ is integrable.

We will use in the sequel that the arithmetic intersection number $\left(\bar{L}_0\cdots\bar{L}_k\right)$ is symmetric and multilinear with respect to the $L_i$ and that
\[\left(\bar{L}_0\right)\cdots\left(\bar{L}_k\right)=\left(\bar{L}_1|_{\mathrm{div}(s)}\right)\cdots\left(\bar{L}_k|_{\mathrm{div}(s)}\right)+\sum_{v\in M_\mathbb{K}}N_v\int_{X_v^{\mathrm{an}}}\log\|s\|^{-1}_v \bigwedge_{j=1}^kc_1(\bar{L}_i)_v,\]
for any global section $s\in H^0(X,L_0)$ (whenever such a section exists) and wuere $N_v=[\mathbb{K}_v:\mathbb{Q}_p]$. In particular, if $L_0$ is the trivial bundle and $\|\cdot\|_{v,0}$ is the trivial metric at all places but $v_0$, this gives
\[\left(\bar{L}_0\right)\cdots\left(\bar{L}_k\right)=N_{v_0}\int_{X_{v_0}^{\mathrm{an}}}\log\|1\|^{-1}_{v_0,0} \bigwedge_{j=1}^kc_1(\bar{L}_i)_{v_0}.\]
When $\bar{L}$ is a big and nef $\mathbb{Q}$-line bundle endowed with a semi-positive continuous adelic metric, following Zhang~\cite{Zhang-positivity}, we define $h_{\bar{L}}(X)$ as
\[ h_{\bar{L}}(X):=\frac{\left(\bar{L}\right)^{k+1}}{(k+1)[\mathbb{K}:\mathbb{Q}]\mathrm{vol}(L)},\]
where $\mathrm{vol}(L)=(L)^k$ is the volume of the line bundle $L$ (also denoted by $\deg_X(L)$ sometimes).
We also define the height of a closed point $x\in X(\bar{\mathbb{Q}})$ as 
\[h_{\bar{L}}(x)=\frac{\left(\bar{L}|x\right)}{[\mathbb{K}:\mathbb{Q}]}=\frac{1}{[\mathbb{K}:\mathbb{Q}]\#\mathsf{O}(x)}\sum_{v\in M_\mathbb{K}}\sum_{\sigma:\mathbb{K}(x)\hookrightarrow \C_v}\log\|s(\sigma(x))\|_{v}^{-1},\]
where $\mathsf{O}(x)$ is the Galois orbit of $x$, for any section $s\in H^0(X,L)$ which does not vanish at $x$. Finally, for any Galois-invariant finite set $F\subset X(\bar{\mathbb{K}})$, we define $h_{\bar{L}}(F)$ as
\[h_{\bar{L}}(F):=\frac{1}{\# F}\sum_{y\in F}h_{\bar{L}}(y).\]
We now assume $X$ is a curve. The Zhang's inequalities~\cite{Zhang-positivity}
can be then written as follows:
\begin{lemma}\label{ineg-Zhang-below}
If $X$ is a smooth projective curve defined over a number field, if $L$ is ample, if $\bar{L}$ is an adelic semi-positive metrized line bundle and if we let $e(\bar{L})=\sup_Z\inf_{x\in (X\setminus Z)(\bar{\mathbb{K}})}h_{\bar{L}}(x)$, where the supremum is taken over all Zariski closed proper subsets $Z$ of $X$ defined over $\mathbb{K}$, then
\[\frac{1}{2}\left(e(\bar{L})+k\inf_{y\in X(\bar{\mathbb{K}})}h_{\bar{L}}(y)\right)\leq h_{\bar{L}}(X)\leq e(\bar{L}).\]
In particular, if $h_{\bar{L}}(x)\geq0$ for all $x\in X(\bar{\mathbb{K}})$, then $h_{\bar{L}}(X)\geq0$.
\end{lemma}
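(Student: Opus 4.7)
The statement is Zhang's inequality of successive minima specialized to a curve $X$ (so $k=1$ in the displayed formula), i.e.\ the classical $\tfrac{1}{2}(e(\bar{L}) + e_{\min}) \leq h_{\bar{L}}(X) \leq e(\bar{L})$ with $e_{\min} := \inf_{y \in X(\bar{\mathbb{K}})} h_{\bar{L}}(y)$. The plan is to prove the two bounds separately, using the arithmetic intersection formula recalled above together with deeper positivity results on adelic metrized line bundles.

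For the upper bound $h_{\bar{L}}(X) \leq e(\bar{L})$: fix $\epsilon > 0$ and, by definition of the essential minimum, pick a proper Zariski closed subset $Z \subsetneq X$ defined over $\mathbb{K}$ with $h_{\bar{L}}(x) \geq e(\bar{L}) - \epsilon$ for every $x \in (X \setminus Z)(\bar{\mathbb{K}})$. Since $L$ is ample, $L^{\otimes N}$ is very ample for $N \gg 1$, and one can produce a global section $s$ of $L^{\otimes N}$ whose effective divisor $D = \mathrm{div}(s)$ is disjoint from $Z$. The intersection formula applied to $(\bar{L}^{\otimes N} \cdot \bar{L}) = N(\bar{L})^2$ with this section gives
\[
N(\bar{L})^2 \;=\; (\bar{L}|_D) \;+\; \sum_{v \in M_\mathbb{K}} N_v \int_{X_v^{\mathrm{an}}} \log \|s\|_v^{-1}\, c_1(\bar{L})_v.
\]
The arithmetic Hilbert--Samuel theorem for the semi-positive adelic bundle $\bar{L}$ provides, for each $N$, an optimal section whose total adelic log-norm $\sum_v N_v \log \|s\|_{v,\sup}$ realizes the arithmetic volume $(\bar{L})^2$. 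Controlling the integral term by this optimal norm, dividing by $2N[\mathbb{K}:\mathbb{Q}]\deg(L)$, and letting $N \to \infty$, then $\epsilon \to 0$, yields $h_{\bar{L}}(X) \leq e(\bar{L})$.

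For the lower bound, the first step is a reduction to the normalized case $e_{\min} = 0$: scaling the metrics $\|\cdot\|_v$ by $e^{c_v}$ for an adelic constant $\{c_v\}$ satisfying $\sum_v N_v c_v / [\mathbb{K}:\mathbb{Q}] = e_{\min}$ subtracts $e_{\min}$ from every point height, hence from $e(\bar{L})$, and by multilinearity of the arithmetic intersection from $h_{\bar{L}}(X)$ as well, while preserving semi-positivity. With $e_{\min} = 0$, it suffices to show $h_{\bar{L}}(X) \geq e(\bar{L})/2$; this is deduced from the arithmetic Hodge index theorem of Yuan--Zhang applied to a suitable twist $\bar{L} - (e(\bar{L})/2) \cdot \bar{\mathcal{O}}_{\mathrm{triv}}$, whose arithmetic self-intersection must be non-positive and which expands algebraically to the required inequality. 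The second assertion of the lemma, that $h_{\bar{L}}(X) \geq 0$ when every point height is nonnegative, then follows at once from the lower bound since $e_{\min} \geq 0$ forces $e(\bar{L}) \geq 0$.

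The main obstacle is establishing the arithmetic Hilbert--Samuel theorem (or equivalently the arithmetic Hodge index theorem) for semi-positive integrable adelic metrized line bundles on curves. This deep analytic input, bridging the asymptotic density of small global sections with the self-intersection $(\bar{L})^2$, powers both bounds; once granted, the remaining steps are classical adaptations of geometry-of-numbers techniques to the adelic framework.
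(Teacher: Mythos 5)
The paper does not prove this lemma: it is cited directly from Zhang's ``Positive line bundles on arithmetic varieties,'' so there is no ``paper's proof'' to compare against. Your sketch, however, has concrete directional errors. For the claimed upper bound $h_{\bar{L}}(X)\leq e(\bar{L})$, you choose $Z$ with $h_{\bar{L}}(x)\geq e(\bar{L})-\epsilon$ off $Z$ and a section $s$ of $L^{\otimes N}$ with $\mathrm{div}(s)$ disjoint from $Z$. But then $(\bar{L}|_{\mathrm{div}(s)})\geq N\deg(L)[\mathbb{K}:\mathbb{Q}](e(\bar{L})-\epsilon)$, and the intersection formula $N(\bar{L})^2=(\bar{L}|_{\mathrm{div}(s)})+\sum_v N_v\int\log\|s\|_v^{-1}c_1(\bar{L})_v$ then gives a \emph{lower} bound on $(\bar{L})^2$ (provided $s$ is small), i.e.\ $h_{\bar{L}}(X)\geq (e(\bar{L})-\epsilon)/2$. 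That is (a weakened form of) the lower bound of the lemma, not the upper bound; no choice of ``optimal section'' reverses this. The actual upper bound requires the opposite strategy: if one had $h_{\bar{L}}(X)>e(\bar{L})+\epsilon$, twisting the metric by the constant $-e(\bar{L})-\epsilon$ produces a semi-positive $\bar{L}'$ with $(\bar{L}')^2>0$; Zhang's small-section theorem (arithmetic Nakai--Moishezon, not Hilbert--Samuel) then gives a global section $s$ of some $(L')^{\otimes N}$ with $\|s\|_{v,\sup}\leq1$ at all $v$, so $h_{\bar{L}}(x)\geq e(\bar{L})+\epsilon$ for every $x\notin\mathrm{div}(s)$, contradicting the definition of $e(\bar{L})$ as the supremum over proper closed $Z$.

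The lower-bound sketch also does not work as written. The object $\bar{L}-(e(\bar{L})/2)\cdot\bar{\mathcal{O}}_{\mathrm{triv}}$ still has underlying algebraic line bundle $L$, which is ample; the arithmetic Hodge index theorem quoted in the paper (Theorem~\ref{tm:Hodge}) applies only to integrable adelic line bundles with trivial underlying bundle $\mathcal{O}_X$, so it cannot be applied here, and even formally it would yield $(\cdot)^2\leq0$, which is again the wrong direction for a lower bound on $h_{\bar{L}}(X)$. Finally, arithmetic Hilbert--Samuel and the arithmetic Hodge index theorem are not equivalent, and Hilbert--Samuel does not produce a single section whose adelic norm ``realizes the arithmetic volume''---it is an asymptotic count of small sections. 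The correct ingredient throughout is Zhang's small-section/arithmetic-ampleness machinery (arithmetic Nakai--Moishezon), and I would encourage you simply to cite Zhang's original theorem of successive minima rather than reprove it, as the paper does.
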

We also will use the following version of the arithmetic Hodge Index Theorem of Yuan and Zhang~\cite{YZ-Hodge}.
\begin{theorem}[Arithmetic Hodge Index]\label{tm:Hodge}
Let $X$ be a smooth projective curve defined over $\mathbb{K}$. Let $\bar{L}$ be an integrable adelic line bundle over $X$ with $L=\mathcal{O}_X$. Then
\[(\bar{L})^2\leq0.\]
\end{theorem}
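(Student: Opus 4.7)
My plan is to deduce this from the classical arithmetic Hodge index theorem for model metrics via an approximation argument.

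\textbf{First reduction.} By integrability, write $\bar{L} = \bar{L}_1 - \bar{L}_2$ with $\bar{L}_1, \bar{L}_2$ semi-positive adelic line bundles on $X$. Since $L = L_1 - L_2 = \mathcal{O}_X$ in $\mathrm{Pic}(X)$, the underlying bundles satisfy $L_1 \simeq L_2 =: M$. Replacing $\bar{L}_i$ by $\bar{L}_i \otimes \bar{A}$ for a fixed semi-positive adelic metric on a sufficiently ample line bundle $A$ on $X$ does not change their difference $\bar{L}$ and yields semi-positive adelic metrics on the ample bundle $M \otimes A$. So I may assume $\bar{L}_1, \bar{L}_2$ are two semi-positive continuous adelic metrics on the same ample line bundle $M$.

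\textbf{Approximation.} Now I invoke the approximation theorem of Zhang and Yuan--Zhang: a semi-positive continuous adelic metric on an ample line bundle is the uniform limit, at every place simultaneously, of semi-positive model metrics, each coming from an $\mathscr{O}_{\mathbb{K}}$-model $(\mathscr{X}_n, \mathscr{L}_i^{(n)})$ of $(X, M)$ on which the line bundle is nef. Continuity of the arithmetic intersection pairing under this kind of convergence gives
\[(\bar{L})^2 = \lim_{n \to \infty} \bigl(\bar{L}_1^{(n)} - \bar{L}_2^{(n)}\bigr)^2.\]

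\textbf{Model case.} For each fixed $n$, the difference $\mathscr{L}_1^{(n)} - \mathscr{L}_2^{(n)}$ restricts to $\mathcal{O}_X$ on the generic fiber, so its arithmetic self-intersection decomposes as a sum of local contributions supported on the special and archimedean fibers. Each non-archimedean contribution is non-positive by the ordinary geometric Hodge index theorem applied to the special fiber of $\mathscr{X}_n$, while each archimedean contribution equals the negative of a Dirichlet-energy integral of the corresponding local Green's function and is therefore non-positive. This is exactly the Faltings--Hriljac arithmetic Hodge index theorem on arithmetic surfaces. Passing to the limit yields $(\bar{L})^2 \leq 0$.

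\textbf{Main obstacle.} The hardest input is the approximation step: constructing semi-positive \emph{model} metrics uniformly close, at every place, to a given semi-positive continuous adelic metric. This is the deep technical heart of Yuan--Zhang's theory and requires Richberg-type regularization of continuous plurisubharmonic functions at archimedean places together with its non-archimedean counterpart, via pluripotential theory on Berkovich curves, at finite places. An alternative route would be to apply the intersection formula directly to the canonical nowhere-vanishing section $s = 1$ of $L = \mathcal{O}_X$, obtaining $(\bar{L})^2 = \sum_v N_v \int_{X_v^{\mathrm{an}}} \phi_v \, c_1(\bar{L})_v$ with $\phi_v = -\log\|1\|_v$, and then concluding by a global ``integration by parts'' argument; however, making that rigorous in the Berkovich setting essentially relies on the same approximation machinery.
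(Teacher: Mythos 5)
The paper does not prove this theorem: it is stated as a direct citation of Yuan and Zhang's arithmetic Hodge index theorem for adelic line bundles, so there is no internal argument to compare against. Your sketch is a correct reconstruction of the standard proof in the curve case, reducing to the Faltings--Hriljac theorem on arithmetic surfaces via Zhang's approximation of semi-positive continuous adelic metrics by model metrics, and you have rightly identified the approximation theorem as the genuine technical input.

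Two small points of precision, neither of which undermines the outline. At archimedean places there is no notion of ``model metric''; what the approximation gives you there are smooth metrics with semi-positive curvature (Richberg regularization), and it is only at finite places that the approximants come from integral models $(\mathscr{X}_n, \mathscr{L}_i^{(n)})$ with $\mathscr{L}_i^{(n)}$ nef. You should also make explicit that both $\mathscr{L}_1^{(n)}$ and $\mathscr{L}_2^{(n)}$ live on a common regular model $\mathscr{X}_n$ (achieved by dominating the two models and resolving), so that the difference is $\mathcal{O}(V_n)$ for a vertical divisor $V_n$, whose self-intersection decomposes fiber by fiber and is non-positive by the geometric Hodge index theorem on the special fibers; the archimedean contribution is then $\sum_v N_v \int \phi_v \, \mathrm{dd}^c\phi_v \le 0$ as you say. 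Finally, the passage to the limit requires more than pointwise continuity of the pairing: it works because the convergence is uniform simultaneously at all places, the metrics at almost all places are unchanged, and the intersection numbers stay bounded across the bounded family of models — this is part of what Zhang's semi-positive class is designed to ensure.
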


When $f:\mathbb{P}^k\to\mathbb{P}^k$ is a degree $d\geq2$ endomorphism defined over a number field $\mathbb{K}$, one can define a canonical metrization on $\mathcal{O}_{\mathbb{P}^k}(1)$ as follows: let $\{\|\cdot\|_{0,v}\}_{v\in M_\mathbb{K}}$ be the naive metrization, i.e. the metrization inducing the naive height. For any $v\in M_\mathbb{K}$, if we let $\|\cdot\|_{n,v}:=((f^n)^*\|\cdot\|_{0,v})^{1/d^n}$, then the sequence
\[g_n:=\log\left( \frac{\|\cdot\|_{n,v}}{\|\cdot\|_{n-1,v}}\right)\]
is a uniform Cauchy sequence, whence $\|\cdot\|_{n,v}$ onverges uniformly to a metric $\|\cdot\|_{f,v}$ on $\mathcal{O}_{\mathbb{P}^k}(1)$ which satisfies $d^{-1}f^*\|\cdot\|_{f,v}=\|\cdot\|_{f,v}$. Moreover, if $g_f:=\sum_ng_n$, then $\|\cdot\|_{f,v}=e^{-g_f}\|\cdot\|_{0,v}$. The induced metrized line bundle $\bar{L}_f$ is adelic and semi-positive and satisfies $\hat{h}_f=h_{\bar{L}_f}$.
%

\subsection{The mutual energy pairing}\label{sec:energy}
\subsubsection{The global mutual energy}
Pick integers  $d_1,d_2\geq2$, pick rational maps $f_1\in \mathrm{Rat}_{d_1}(\bar{\mathbb{Q}})$ and $f_2\in \mathrm{Rat}_{d_2}(\bar{\mathbb{Q}})$ and let $\mathbb{K}$ be a number field such that $f_1$ and $f_2$ are both defined over $\mathbb{K}$.

Let $\bar{L}:=\frac{1}{2}(\bar{L}_{f_1}+\bar{L}_{f_2})$, where $\bar{L}_{f_i}$ is the canonical metric of $f_i$ on $\mathcal{O}_{\mathbb{P}^1}(1)$. 
\begin{lemma}\label{lm:heightpairing}
For any $x\in \mathbb{P}^1(\bar{\mathbb{Q}})$, we have $h_{\bar{L}}(x)=\frac{1}{2}(\hat{h}_{f_1}(x)+\hat{h}_{f_2}(x))$. Moreover, 
\[h_{\bar{L}}(\mathbb{P}^1)=\frac{1}{2}\langle f_1,f_2\rangle.\]
\end{lemma}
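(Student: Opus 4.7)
The plan is to derive both identities from multilinearity of the arithmetic intersection pairing together with the identity $h_{\bar{L}_{f_i}}=\hat{h}_{f_i}$ recalled earlier in the text.

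For the first identity, I would simply use that the height of a closed point is linear in the $\mathbb{Q}$-line bundle structure. Since all three of $\bar{L}, \bar{L}_{f_1}, \bar{L}_{f_2}$ have the same underlying line bundle $\mathcal{O}_{\mathbb{P}^1}(1)$, a single non-vanishing section $s\in H^0(\mathbb{P}^1,\mathcal{O}(1))$ can be used to compute all three heights; the metric on $\bar{L}$ satisfies $\log\|s\|_{\bar{L},v}=\tfrac{1}{2}\log\|s\|_{\bar{L}_{f_1},v}+\tfrac{1}{2}\log\|s\|_{\bar{L}_{f_2},v}$, and summing $-\log\|s(\sigma(x))\|_v$ over places $v$ and over Galois conjugates $\sigma(x)$ yields $h_{\bar{L}}(x)=\tfrac{1}{2}(\hat{h}_{f_1}(x)+\hat{h}_{f_2}(x))$ at once.

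For the second identity, the main preliminary step is to establish the vanishing $(\bar{L}_{f_i})^2=0$ for $i=1,2$. Preperiodic points of $f_i$ form an infinite, hence Zariski dense, subset of $\mathbb{P}^1(\bar{\mathbb{Q}})$ on which $\hat{h}_{f_i}$ vanishes (using the scaling $\hat{h}_{f_i}\circ f_i = d_i\hat{h}_{f_i}$); combined with $\hat{h}_{f_i}\geq 0$, this gives both $\inf_y h_{\bar{L}_{f_i}}(y)=0$ and $e(\bar{L}_{f_i})=0$. Applying Lemma~\ref{ineg-Zhang-below} then forces $h_{\bar{L}_{f_i}}(\mathbb{P}^1)=0$, which is equivalent to $(\bar{L}_{f_i})^2=0$ by unwinding the definition of the height of the variety.

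With this vanishing in hand, the rest is bilinear bookkeeping. Expanding
\[(\bar{L})^2 = \tfrac{1}{4}\bigl((\bar{L}_{f_1})^2+2\,\bar{L}_{f_1}\cdot\bar{L}_{f_2}+(\bar{L}_{f_2})^2\bigr)\]
collapses to $\tfrac{1}{2}\,\bar{L}_{f_1}\cdot\bar{L}_{f_2}$; expanding the defining formula $\langle f_1,f_2\rangle=-\tfrac{1}{2[\mathbb{K}:\mathbb{Q}]}(\bar{L}_{f_1}-\bar{L}_{f_2})^2$ similarly gives $\tfrac{1}{[\mathbb{K}:\mathbb{Q}]}\bar{L}_{f_1}\cdot\bar{L}_{f_2}$. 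Substituting into Zhang's normalization $h_{\bar{L}}(\mathbb{P}^1)=(\bar{L})^2/(2[\mathbb{K}:\mathbb{Q}]\,\mathrm{vol}(L))$, with $\mathrm{vol}(L)=\deg\mathcal{O}_{\mathbb{P}^1}(1)=1$, completes the proof. The only non-routine step is the density-plus-Zhang argument establishing $(\bar{L}_{f_i})^2=0$; everything else is formal.
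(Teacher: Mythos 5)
Your proof is correct and follows essentially the same route as the paper's: linearity of heights for the pointwise identity, then bilinear expansion of $(\bar{L})^2$ and $(\bar{L}_{f_1}-\bar{L}_{f_2})^2$ combined with the vanishing $h_{\bar{L}_{f_i}}(\mathbb{P}^1)=0$ obtained from Zhang's inequalities. The only difference is cosmetic: you spell out the density-of-preperiodic-points argument that underlies $e(\bar{L}_{f_i})=0$, whereas the paper invokes Lemma~\ref{ineg-Zhang-below} tersely and keeps the $h_{\bar{L}_{f_i}}(\mathbb{P}^1)$ terms symbolic until the final step.
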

\begin{proof}
The first equality comes from directly from the definition of $\bar{L}$. Indeed, if $x\in\mathbb{P}^1(\bar{\mathbb{Q}})$, then $h_{\bar{L}}(x)=h_{\frac{1}{2}(\bar{L}_{f_1}+\bar{L}_{f_2})}(x)=\frac{1}{2}h_{\bar{L}_{f_1}}(x)+\frac{1}{2}h_{\bar{L}_{f_2}}(x)=\frac{1}{2}(\hat{h}_{f_1}(x)+\hat{h}_{f_2}(x))$.
For the second equality, we can compute
\[h_{\bar{L}}(\mathbb{P}^1)=\frac{(\bar{L})^2}{2[\mathbb{K}:\mathbb{Q}]}=\frac{(\bar{L}_{f_1}+\bar{L}_{f_2})^2}{8[\mathbb{K}:\mathbb{Q}]}=\frac{1}{8[\mathbb{K}:\mathbb{Q}]}\left((\bar{L}_{f_1})^2+(\bar{L}_{f_2})^2+2\left(\bar{L}_{f_1}\cdot\bar{L}_{f_2}\right)\right).\]
We thus have proved that $h_{\bar{L}}(\mathbb{P}^1)=\frac{1}{4}\left(h_{\bar{L}_{f_1}}(\mathbb{P}^1)+h_{\bar{L}_{f_2}}(\mathbb{P}^1)\right)+\frac{\left(\bar{L}_{f_1}\cdot\bar{L}_{f_2}\right)}{4[\mathbb{K}:\mathbb{Q}]}$.
Similarly, by definition of Zhang's pairing, we have
\[\langle f_1,f_2\rangle =-\frac{(\bar{L}_{f_1}-\bar{L}_{f_2})^2}{2[\mathbb{K}:\mathbb{Q}]}=\frac{\left(\bar{L}_{f_1}\cdot\bar{L}_{f_2}\right)}{[\mathbb{K}:\mathbb{Q}]}-\frac{1}{2}\left(h_{\bar{L}_{f_1}}(\mathbb{P}^1)+h_{\bar{L}_{f_2}}(\mathbb{P}^1)\right).\]
By Zhang's inequalities (see, e.g., Lemma~\ref{ineg-Zhang-below}), we have $\hat{h}_{f_1}(\mathbb{P}^1)=\hat{h}_{f_2}(\mathbb{P}^1)=0$. This concludes the proof.
\end{proof}

\begin{remark}\normalfont
Usually, the mutual energy is not described as above, but rather in terms of local contributions, see e.g.~\cite[Formula~(2)]{szpiro}. We will use properties of the archimedean contributions in the proof of Theorem~\ref{tm:uniform}.
\end{remark}
\subsubsection{The complex mutual energy}
The mutual energy of two signed measures $\rho,\nu$ of mass $0$ with continuous potentials on $\mathbb{P}^{1}(\mathbb{C})$ is defined in \cite{FRL} by
\[(\rho,\nu):=-\int_{\mathbb{C}^2\setminus\Delta}\log|z-w|\mathrm{d}\rho(z)\otimes \mathrm{d}\nu(w).\]
It is known that $(\rho,\rho)\geq0$ and $(\rho,\rho)=0$ if and only if $\rho=0$ (see~\cite[Propositions~2.6]{FRL}). By the classification of rational maps sharing their maximal entropy measure~\cite{Levin-Przytycki}, we get
\begin{lemma}\label{lm:samemu}
Pick two integers $d_1,d_2\geq 2$. Then 
\begin{enumerate}
\item If $d_1$ and $d_2$ are multiplicatively independent, then $(\mu_{f_1}-\mu_{f_2},\mu_{f_1}-\mu_{f_2})>0$, unless there exists $\varphi\in \mathrm{PLG}(2,\mathbb{C})$ such that both $\varphi\circ f_1\circ \varphi^{-1}$ and $\varphi\circ f_2\circ \varphi^{-1}$ are monomials, Chebychev polynomials of Latt\`es maps.
\item if $d_1$ and $d_2$ are multiplicatively dependent, the set of pairs $(f_1,f_2)\in\mathrm{Rat}_{d_1}\times\mathrm{Rat}_{d_2}(\mathbb{C})$ such that $(\mu_{f_1}-\mu_{f_2},\mu_{f_1}-\mu_{f_2})=0$ is contained in a pluripolar set.
\end{enumerate}
\end{lemma}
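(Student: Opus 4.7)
\emph{Plan.} Both parts are governed by the positive-definiteness of the mutual energy recalled just above the lemma: $(\mu_{f_1}-\mu_{f_2},\mu_{f_1}-\mu_{f_2})=0$ is equivalent to $\mu_{f_1}=\mu_{f_2}$. So both assertions reduce to the analysis of the locus
\[
Z:=\bigl\{(f_1,f_2)\in\mathrm{Rat}_{d_1}\times\mathrm{Rat}_{d_2}(\mathbb{C}) : \mu_{f_1}=\mu_{f_2}\bigr\}.
\]

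For (1), I would invoke the classical classification of pairs of rational maps with equal measure of maximal entropy, due to Levin and Przytycki and subsequently refined by Ye and others: if $\mu_{f_1}=\mu_{f_2}$ with $\deg f_i\geq 2$, then, up to a common conjugation by an element of $\mathrm{PGL}(2,\mathbb{C})$, either (a) the two maps share an iterate, i.e.\ $f_1^m=f_2^n$ for some $m,n\geq 1$, or (b) both $f_1$ and $f_2$ lie in one of the exceptional classes (monomials, Chebyshev, or Lattès). Case (a) forces $d_1^m=d_2^n$ and is therefore ruled out by the multiplicative independence of $d_1$ and $d_2$, so (b) holds, which is precisely the statement of (1).

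For (2), I would reduce to the equal-degree case already treated by DeMarco and Mavraki~\cite{DeMarco_Mavraki_2024}. Writing $d_1^a=d_2^b=d$ for positive integers $a,b$, consider the algebraic morphism
\[
\Phi:\mathrm{Rat}_{d_1}\times\mathrm{Rat}_{d_2}\longrightarrow\mathrm{Rat}_d\times\mathrm{Rat}_d,\qquad (f_1,f_2)\longmapsto(f_1^a,f_2^b).
\]
Since $\mu_{f^k}=\mu_f$ for every $k\geq 1$, we have $Z=\Phi^{-1}(Z')$, where $Z'\subseteq\mathrm{Rat}_d\times\mathrm{Rat}_d(\mathbb{C})$ is the analogous vanishing locus in equal degree $d$. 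By DeMarco-Mavraki there exists a plurisubharmonic function $u$ on $\mathrm{Rat}_d\times\mathrm{Rat}_d$, not identically $-\infty$, with $Z'\subseteq\{u=-\infty\}$. Then $u\circ\Phi$ is plurisubharmonic; it is not identically $-\infty$ because the image of $\Phi$ is not contained in $\{u=-\infty\}$, as witnessed by $\Phi(z^{d_1},g)=(z^{d_1^a},g^b)$ for any $g\in\mathrm{Rat}_{d_2}(\mathbb{C})$ whose Julia set differs from the unit circle (for such $g$, $\mu_{z^{d_1^a}}\neq\mu_{g^b}$). Hence $Z\subseteq\{u\circ\Phi=-\infty\}$ is pluripolar.

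The principal obstacle is the classification theorem invoked in (1): this is a substantial result, and care is needed to state and cite it correctly in the setting of rational maps of possibly different degrees. Once that input is accepted, (2) is a formal reduction to the equal-degree theorem of DeMarco-Mavraki, using only that the preimage of a pluripolar set under a non-degenerate algebraic morphism is pluripolar.
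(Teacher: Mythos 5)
Your proposal for part~(1) matches the paper's intended route: the paper gives no proof beyond citing ``the classification of rational maps sharing their maximal entropy measure'' (the citation key is actually left blank in the source), so the argument is exactly what you write --- reduce to $\mu_{f_1}=\mu_{f_2}$ via positive-definiteness of the mutual energy, then invoke Levin--Przytycki/Ye. Be a little careful with the statement of that theorem: in the non-exceptional case the conclusion is $\sigma\circ f_1^m=f_2^n$ for some $m,n\geq 1$ and a measure-preserving M\"obius transformation $\sigma$, not the literal identity $f_1^m=f_2^n$; this does not affect your argument because one still gets $\deg(\sigma\circ f_1^m)=d_1^m=d_2^n$, contradicting multiplicative independence.

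For part~(2) there is a genuine gap in the detour through the equal-degree case. You set $\Phi(f_1,f_2)=(f_1^a,f_2^b)$, take a plurisubharmonic $u$ with $Z'\subseteq\{u=-\infty\}$ furnished by DeMarco--Mavraki, and then need to show $u\circ\Phi\not\equiv-\infty$. Your witness point $(z^{d_1},g)$ only shows $\Phi(z^{d_1},g)\notin Z'$; it does \emph{not} show $u\bigl(\Phi(z^{d_1},g)\bigr)>-\infty$, since the polar set $\{u=-\infty\}$ may be strictly larger than $Z'$. This is not a vacuous worry: the image of $\Phi$ is a proper (low-dimensional) analytic subvariety of $\mathrm{Rat}_d\times\mathrm{Rat}_d$, and a positive-dimensional subvariety can very well lie inside the polar set of a plurisubharmonic function (e.g.\ $\{z_1=0\}\subset\{\log|z_1|=-\infty\}$). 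To close the gap you would need to know more about $u$ --- for instance that $\{u=-\infty\}$ is exactly the countable union of proper algebraic subvarieties coming from the classification --- but at that point the reduction through $\Phi$ becomes superfluous: the classification applies directly to $(f_1,f_2)\in\mathrm{Rat}_{d_1}\times\mathrm{Rat}_{d_2}$ of possibly different degrees, and shows that $Z$ is contained in the union of the exceptional locus (a proper algebraic subvariety) with the countable family of proper algebraic subvarieties $\{(f_1,f_2):\sigma\circ f_1^m=f_2^n\text{ for some symmetry }\sigma\}$ indexed by pairs $(m,n)$ with $d_1^m=d_2^n$; such a countable union of proper subvarieties is pluripolar, which is the desired conclusion. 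The direct argument, parallel to your treatment of part~(1), is what the paper's (unwritten) proof amounts to.
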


\begin{proof}
First remark that if $(\mu_{f_1}-\mu_{f_2},\mu_{f_1}-\mu_{f_2})=0$, then $\mu_{f_1}=\mu_{f_2}$ by \cite[Proposition~2.6]{FRL}. The lemma then follows Theorem A from~\cite{Levin-Przytycki}: it states that
\begin{enumerate}
\item[$(a)$] either there is $\varphi\in \mathrm{PGL}(2,\C)$ such that $\varphi\circ f_1\circ \varphi^{-1}$ and $\varphi\circ f_2\circ \varphi^{-1}$ are monomials, Chebychev polynomials of Latt\`es maps,
\item[$(b)$] or there are iterates $F_1$ and $F_2$ of $f_1$ and $f_2$ respectively and integers $N,M\geq1$ such that
\[(F_1^{-1}\circ F_1)\circ F_1^N = (F_2^{-1}\circ F_2)\circ F_2^M\]
where $(F_i^{-1}\circ F_i)$ is defined as a correspondence. In particular, $d_1=\deg(f_1)$ and $d_2=\deg(f_2)$ are multiplicatively dependent in this case.
\end{enumerate}
When $d_1$ and $d_2$ are multiplicatively independent, we thus are in the case $(a)$ above. This concludes the proof of the first point.

To prove the second point, we can remark that for any given $p,q,N,M\geq1$, the set of pairs $(f_1,f_2)\in \mathrm{Rat}_{d_1}\times\mathrm{Rat}_{d_2}(\mathbb{C})$ that satisfy
\[(f_1^{-p}\circ f_1^p)\circ f_1^N=(f_2^{-q}\circ f_2^q)\circ f_2^M\]
is contained in a strict closed subvariety $Z_{p,q}^{N,M}$ of $\mathrm{Rat}_{d_1}\times\mathrm{Rat}_{d_2}(\mathbb{C})$. Whence we can write
\[\{(f_1,f_2)\in \mathrm{Rat}_{d_1}\times\mathrm{Rat}_{d_2}(\mathbb{C})\, : \ (\mu_{f_1}-\mu_{f_2},\mu_{f_1}-\mu_{f_2})=0\}\subset \bigcup_{p,q,N,M\geq1}Z_{p,q}^{N,M}\cup E,\]
where $E$ is the union of the orbits of pairs of monomial maps, pairs of Chebychev polynomials and pairs of Latt\`es maps, under the action by simultaneous conjugacy. In particular $E$ is a proper closed subvariety of $\mathrm{Rat}_{d_1}\times\mathrm{Rat}_{d_2}(\mathbb{C})$ and $\bigcup_{p,q,N,M\geq1}Z_{p,q}^{N,M}\cup E$ is pluripolar.
\end{proof}

\section{H\"older estimates for the dynamical Green functions}\label{sec:Holder}
In this section, we let $(K,|\cdot|)$ be a complete and algebraically closed field of characteristic $0$ and $k\geq1$ be an integer. Our main aim here is to show that, when varying in an algebraic family, the Green function of an endomorphism of $\mathbb{P}^k_K$ has explicitly controlled H\"older constant and exponent with the parameter.

\medskip

Pick a degree $d$ endomorphism $f:\mathbb{P}^k_{K}\to\mathbb{P}^k_{K}$ defined over $K$ and let $F:\mathbb{A}^{k+1}_{K}\to\mathbb{A}^{k+1}_{K}$ be a homogeneous polynomial lift of $f$. For any $z\in\mathbb{P}^k(K)$ and any $x\in K^{k+1}\setminus\{0\}$ with $\pi(x)=z$, we let
\[u_F(z):=\frac{1}{d}\log\|F(x)\|-\log\|x\|.\]
Note that it depends on a choice of lift $F$ we have $u_{\alpha F}=u_{F}+\frac{1}{d}\log|\alpha|$ for all $\alpha\in K^*$. We also define the \emph{Green function} of $F$ as \[g_F:=\sum_{j\geq0}d^{-j}u_F\circ f^j.\]
It also depends on a choice of lift $F$: we have $g_{\alpha F}=g_F+\frac{1}{d-1}\log|\alpha|$ for any $\alpha\in K^*$.

\medskip

As the map $F$ is a polynomial map, it can be identified with a point in $\mathbb{A}_K^{N+1}$ where $N+1=(k+1)\frac{(d+k)!}{k!d!}$ via its coefficients. We let
\[\|F\|:=\max\{|a|\, : \ a \ \text{coefficient of} \ F\}.\]

This section is devoted to the proof of the following.

\begin{theorem}\label{tm-Green-Holder-adelic}
The Green function $g_F$ is H\"older continuous. More precisely, there are constants $C_1,C_2\geq1$ such that $C_1=C_2=1$ if $K$ is non-archimedean and $C_1$ and $C_2$ depend only on $d$ and $k$ if $K$ is archimedean and a constant $C_3\geq1$ depending only on $k$ and $d$ such that
\[|g_F(z)-g_F(w)|\leq C_3\left(C_1+\log^+\|F\|+\log^+\frac{\|F\|^{(k+1)d^k}}{|\mathrm{Res}(F)|}\right)\cdot\mathrm{dist}(z,w)^{\alpha},\]
for all $z,w\in\mathbb{P}^k(K)$, where, if $(K,|\cdot|)$ is archimedean
\[\alpha:=\displaystyle\log d/\left(C_2+\log\max\left\{2d,\frac{\|F\|^{(k+1)d^k}}{|\mathrm{Res}(F)|}\right\}\right),\]
and if $(K,|\cdot|)$ is non-archimedean, $\alpha$ satisfies
\[\alpha=\left\{\begin{array}{ll}
1 & \text{if} \ \|F\|^{(k+1)d^k}=|\mathrm{Res}(F)| \ \text{and},\\
\displaystyle\log d/2\log\left(\frac{\|F\|^{(k+1)d^k}}{|\mathrm{Res}(F)|}\right)  & \text{otherwise}.
\end{array}\right.\]
\end{theorem}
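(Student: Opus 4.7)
The plan is to analyze the telescoping series $g_F = \sum_{j\geq 0} d^{-j} u_F \circ f^j$ using three ingredients, each with explicit dependence on $\|F\|_v$ and $|\mathrm{Res}(F)|_v$: a uniform oscillation bound on $u_F$, a Lipschitz bound on $u_F$, and a Lipschitz (expansion) bound on $f$ itself.

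First, I would recall the classical comparison inequalities for lifts (see, e.g., \cite{BB1}), which produce constants $c_v(d,k)\leq 1 \leq C_v(d,k)$, equal to $1$ at every non-archimedean place, such that for all $x\in\mathbb{C}_v^{k+1}$,
\[
c_v \cdot \frac{|\mathrm{Res}(F)|_v}{\|F\|_v^{(k+1)d^k - 1}} \cdot \|x\|_v^d \;\leq\; \|F(x)\|_v \;\leq\; C_v \cdot \|F\|_v \cdot \|x\|_v^d.
\]
Taking logarithms and subtracting yields a uniform oscillation bound on $u_F$ of the form $M_v \leq \tfrac{1}{d}(C' + \log^+\|F\|_v + \log^+(\|F\|_v^{(k+1)d^k}/|\mathrm{Res}(F)|_v))$. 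Next, by choosing unit-norm representatives and differentiating $\log\|F(x)\|_v$ (archimedean) or exploiting ultrametric cancellations together with a bound on the partial derivatives (non-archimedean), the lower size estimate prevents blow-up and gives a Lipschitz bound $|u_F(z)-u_F(w)|\leq L_v\cdot \mathrm{dist}_v(z,w)$ with $L_v \lesssim \|F\|_v^{(k+1)d^k}/|\mathrm{Res}(F)|_v$. The same style of computation applied to $f$ itself furnishes a Lipschitz constant $A_v$ for $f$ on $(\mathbb{P}^k(\mathbb{C}_v),\mathrm{dist}_v)$ of the same order.

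With these three ingredients in hand, for every $j\geq 0$ one has the two-sided estimate
\[
|u_F(f^j(z)) - u_F(f^j(w))| \;\leq\; \min\!\bigl( 2 M_v,\; L_v \, A_v^j\, \mathrm{dist}_v(z,w)\bigr).
\]
I would split the defining series of $g_F$ at the index $j_0$ where the two bounds balance, $L_v A_v^{j_0}\mathrm{dist}_v(z,w)\asymp M_v$, and sum the two resulting geometric series, getting roughly
\[
|g_F(z) - g_F(w)| \;\lesssim\; M_v\cdot\bigl(L_v\,\mathrm{dist}_v(z,w)\bigr)^{\log d/\log A_v}.
\]
Since $L_v$ is bounded by a polynomial in $A_v$, the factor $L_v^{\log d/\log A_v}$ absorbs into a constant depending only on $d$ and $k$, producing a Hölder constant of the announced form $C_3\cdot(C_{1,v}+\log^+\|F\|_v+\log^+(\|F\|_v^{(k+1)d^k}/|\mathrm{Res}(F)|_v))$ and the exponent $\alpha_v\asymp \log d/\log A_v$. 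The case-split in the non-archimedean statement is then natural: when $\|F\|_v^{(k+1)d^k}=|\mathrm{Res}(F)|_v$ (good reduction) the size inequalities force $u_F$, hence $g_F$, to be constant on $\mathbb{P}^k(\mathbb{C}_v)$ and the bound is trivial with $\alpha_v=1$; otherwise one must keep track of both $L_v$ and $A_v$, whose logarithms are each bounded by $\log(\|F\|_v^{(k+1)d^k}/|\mathrm{Res}(F)|_v)$, which accounts for the factor $2$ appearing in the non-archimedean exponent.

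The main obstacle is the Lipschitz estimate on $u_F$ with the precise dependence on $\|F\|_v^{(k+1)d^k}/|\mathrm{Res}(F)|_v$, and, relatedly, tracking every constant through both the archimedean and non-archimedean settings in a uniform way. In the archimedean case this calls for a careful $\partial\bar{\partial}$-style mean-value argument where the resultant lower bound is used to stay away from the indeterminacy locus of $\log\|F\|_v$; in the non-archimedean case the corresponding argument proceeds through the model-metric description of $u_F$, where good reduction collapses the bound to its trivial form. Once these inputs are in place, the remaining telescoping optimization is the classical Briend--Duval/Dinh--Sibony style argument adapted to keep all adelic constants explicit.
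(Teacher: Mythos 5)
Your proposal reproduces the paper's strategy: the three estimates you identify (a supremum bound and a Lipschitz bound on $u_F$, and a Lipschitz expansion bound on $f$, all controlled by $\|F\|_v^{(k+1)d^k}/|\mathrm{Res}(F)|_v$ via the MacCaulay resultant comparison inequality) are exactly Lemmas~\ref{lm:supuF}, \ref{lm:lipuF} and~\ref{lm:lipf}, and the balancing of the telescoping series $g_F=\sum_j d^{-j}u_F\circ f^j$ at an optimal truncation index, together with the replacement of the Lipschitz constant of $f$ by $\max\{2d,\cdot\}$ to keep the exponent well-defined, is precisely the argument of Proposition~\ref{lm:Holder-explicit}. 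The one place the paper is a touch sharper is the non-archimedean case, where it invokes the ultrametric fact that $u_F(x)=u_F(y)$ \emph{exactly} whenever $\mathrm{dist}(x,y)\leq|\mathrm{Res}(F)|_v/\|F\|_v^{(k+1)d^k}$, so the head of the series vanishes identically and only the tail survives, rather than balancing two error terms as in your sketch; this changes no exponent and is consistent with your proposal.
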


\subsection{Estimates for building blocks of the Green function}
From now on, unless specified, $K$ is an algebraically closed field of characteristic zero which is complete with respect to a non-trivial norm $|\cdot|$. The following is a consequence of basic properties of McCaulay resultant.
\begin{lemma}\label{lm:supuF}
There is a constant $C_1(K,d,k)\geq1$ such that $C_1(K,d,k)=1$ if $K$ is non-archimedean and $C_1(K,d,k)$ depends only on $d$ and $k$ when $K$ is archimedean, and such that for any $F$ as above, 
\[\sup_{z\in\mathbb{P}^k(K)}|u_F(z)|\leq \log\max\{\|F\|,1\}+\log C_1(K,d,k).\]
\end{lemma}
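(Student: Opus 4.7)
The plan is to estimate $u_F$ directly via a coordinate expansion of $F$. Write $F = (F_1, \ldots, F_{k+1})$ with each $F_i(X) = \sum_{|\alpha|=d} a_{i,\alpha} X^\alpha$ a homogeneous polynomial of degree $d$ in $X_1, \ldots, X_{k+1}$. The number of multi-indices of weight $d$ is $N_{d,k} := \binom{d+k}{k}$, and every coefficient satisfies $|a_{i,\alpha}| \leq \|F\|$ by the definition of $\|F\|$.

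For the upper estimate on $u_F$, pick $x \in K^{k+1} \setminus \{0\}$ with $\pi(x) = z$ and $\|x\| = \max_j |x_j|$, so that $|x^\alpha| \leq \|x\|^d$ for every weight-$d$ multi-index $\alpha$. Applying the triangle inequality (archimedean case) or the ultrametric inequality (non-archimedean case) to $F_i(x) = \sum_\alpha a_{i,\alpha} x^\alpha$ yields
\[|F_i(x)| \leq \lambda_K \|F\| \|x\|^d, \qquad \lambda_K := \begin{cases} N_{d,k}, & K \text{ archimedean}, \\ 1, & K \text{ non-archimedean}. \end{cases}\]
Taking the maximum over $i$ gives $\|F(x)\| \leq \lambda_K \|F\| \|x\|^d$, and substituting into the definition of $u_F$ produces
\[u_F(z) \leq \tfrac{1}{d}\log \|F\| + \tfrac{1}{d}\log \lambda_K \leq \log\max\{\|F\|,1\} + \log C_1(K,d,k),\]
with $C_1(K,d,k) := \lambda_K^{1/d} \geq 1$, equal to $1$ in the non-archimedean case and depending only on $d$ and $k$ otherwise. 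The last step uses $\tfrac{1}{d}\log\|F\| \leq \log\max\{\|F\|,1\}$, which is trivial if $\|F\| \geq 1$ and automatic if $\|F\| < 1$ (left-hand side then negative, right-hand side zero).

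For the matching lower estimate $-u_F(z) \leq \log\max\{\|F\|,1\} + \log C_1(K,d,k)$, which furnishes the remaining half of $|u_F|$, one needs a lower bound on $\|F(x)\|$ in terms of $\|x\|^d$. This is where the announced \emph{basic properties of the Macaulay resultant} enter: since $F$ has no nontrivial common zero, $\mathrm{Res}(F) \neq 0$, so the Hurwitz-type identity
\[X_j^{D} \cdot \mathrm{Res}(F) = \sum_{i=1}^{k+1} g_{i,j}(X) F_i(X), \qquad D := (k+1)(d-1)+1,\]
(with cofactors $g_{i,j}$ whose coefficients are universal polynomial expressions in the $a_{i,\alpha}$) yields, after evaluation at $x$ and a triangle or ultrametric estimate on the $g_{i,j}$, a reverse bound $\|F(x)\| \geq c(K,d,k) \|x\|^d$ once the resultant-dependent prefactor is absorbed into the $\log\max\{\|F\|,1\}$ term via the rescaling ambiguity $F \mapsto \alpha F$ (which shifts $u_F$ by $\tfrac{1}{d}\log|\alpha|$). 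The main obstacle is precisely this organization: one must verify that the universal combinatorial data in the Hurwitz identity depend only on $d$ and $k$, so that the final $C_1(K,d,k)$ carries no residual dependence on $|\mathrm{Res}(F)|$ or $\|F\|$ beyond what is already captured by $\log\max\{\|F\|,1\}$.
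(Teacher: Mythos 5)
The upper half of your argument is correct and closes cleanly: the triangle (resp.\ ultrametric) inequality gives $\|F(x)\|\leq \lambda_K\|F\|\,\|x\|^d$ with $\lambda_K=\binom{d+k}{k}$ (resp.\ $1$), hence $u_F(z)\leq d^{-1}\log\|F\|+d^{-1}\log\lambda_K\leq\log\max\{\|F\|,1\}+\log\lambda_K^{1/d}$, and the last step is exactly as you say.

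The lower half has a genuine gap, which you half-acknowledge in your final sentence but do not close. The Hurwitz--Macaulay identity does give a reverse bound, but it takes the form
\[\|F(x)\|\geq \frac{|\mathrm{Res}(F)|}{C(K,d,k)\,\|F\|^{(k+1)d^k-1}}\,\|x\|^d,\]
so that $-u_F(z)\leq d^{-1}\log\bigl(C(K,d,k)\,\|F\|^{(k+1)d^k-1}/|\mathrm{Res}(F)|\bigr)$. You propose to absorb the prefactor into $\log\max\{\|F\|,1\}$ by rescaling $F\mapsto\alpha F$, but the quantity $\|F\|^{(k+1)d^k}/|\mathrm{Res}(F)|$ is \emph{invariant} under that rescaling (because $\mathrm{Res}$ is homogeneous of degree $(k+1)d^k$), so no choice of $\alpha$ removes it. In fact the lemma cannot hold as stated without a term recording proximity to the resultant locus: take $k=1$, $K=\mathbb{C}$, and $F_\varepsilon=(X_0^d,\varepsilon X_1^d)$ with $|\varepsilon|\leq 1$; then $\|F_\varepsilon\|=1$, so the right-hand side is a constant, yet $u_{F_\varepsilon}([0:1])=d^{-1}\log|\varepsilon|\to-\infty$ as $\varepsilon\to0$. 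A correct statement needs an extra term such as $\log^+\bigl(\|F\|^{(k+1)d^k}/|\mathrm{Res}(F)|\bigr)$, which is precisely what appears in the bounds of Theorem~\ref{tm-Green-Holder-adelic} and Proposition~\ref{lm:Holder-explicit}. For what it is worth, the paper's own proof has the same defect: in the displayed estimate for $|u_F(z)|$ it writes $\log\bigl(|\mathrm{Res}(F)|/\|F\|^{(k+1)d^k-1}\bigr)$ where the lower bound actually produces $\log\bigl(\|F\|^{(k+1)d^k-1}/|\mathrm{Res}(F)|\bigr)$, and the subsequent ``$\mathrm{Res}$ is a polynomial of degree $(k+1)d^k$'' step bounds the wrong quantity. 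So the gap is inherited from the source text; but your rescaling device does not repair it and should not be presented as if it might.
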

\begin{proof}
An application of the MacCaulay resultant gives a constant $C(K)\geq1$ which is $1$ whenever $K$ is non-archimedean and depending only on $d$ and $k$ when $K$ is archimedean, such that
\begin{align}
\frac{|\mathrm{Res}(F)|}{C(K)\|F\|^{(k+1)d^k-1}}\|x\|^d\leq \|F(x)\|\leq C(K)\|F\|\cdot\|x\|^d, \quad x\in K^{k+1}\setminus\{0\}.\label{resultant}
\end{align}
This rewrites, for $z\in\mathbb{P}^k(K)$ and $x\in K^{k+1}$ with $\pi(x)=z$, as
\begin{align*}
|u_F(z)| & =\left|\frac{1}{d}\log\frac{\|F(x)\|}{\|x\|^d}\right|\\
 & \leq  \max\left\{0,\log\|F\|+\log C(K),\log\frac{|\mathrm{Res}(F)|}{\|F\|^{(k+1)d^k-1}}+\log C(K)\right\}.
\end{align*}
As the resultant map $\mathrm{Res}:\mathbb{A}^{N+1}\to \mathbb{A}^1$ is homogeneous with degree $(k+1)d^k$ and is defined over $\mathbb{Z}$, we have
\[\log\frac{|\mathrm{Res}(F)|}{\|F\|^{(k+1)d^k-1}}\leq \log\|F\|+C'(K),\]
where $C'(K)\geq0$ and $C'(K)=0$ if $K$ is non-archimedean and depending only on $d$ and $k$ when $K$ is archimedean. This concludes the proof. 
\end{proof}

Following the strategy of \cite[Theorem~13]{Kawaguchi-Silverman-Green}, we control the lipschitz constant of $u_F$.
\begin{lemma}\label{lm:lipuF}
There is a constant $C_2(K,d,k)\geq1$ such that $C_2(K,d,k)=1$ if $K$ is non-archimedean and $C_2(K,d,k)$ depends only on $d$ and $k$ when $K$ is archimedean, and such that for any $F$ as above, 
\[|u_F(z)-u_F(w)|\leq \frac{C_2(K,d,k)^2\|F\|^{(k+1)d^k}}{d|\mathrm{Res}(F)|}\cdot 
\log\left(\frac{C_2(K,d,k)^2\|F\|^{(k+1)d^k}}{|\mathrm{Res}(F)|}\right)\cdot\mathrm{dist}(z,w).\]
Moreover, if $(K,|\cdot|)$ is non-archimedean and $\mathrm{dist}(x,y)\leq|\mathrm{Res}(F)|/\|F\|^{(k+1)d^k}$, then $u_F(x)=u_F(y)$.
\end{lemma}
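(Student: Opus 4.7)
Both assertions reduce to two ingredients that are already visible in the proof of Lemma~\ref{lm:supuF}: the polynomial Lipschitz behavior of $F$ on the unit ball of $K^{k+1}$, and the resultant lower bound \eqref{resultant}. I would pick lifts $x,y\in K^{k+1}$ of $z,w$ with $\|x\|=\|y\|=1$. Non-archimedeanly, rescaling $y$ by the unit $x_{i_0}/y_{i_0}$ (for an index $i_0$ where $|x_{i_0}|=1$) gives exactly $\|x-y\|=\mathrm{dist}(z,w)$; archimedeanly, rotating $y$ by a phase matching $\arg(x_{i_0})$ yields $\|x-y\|\leq C(K,k)\cdot\mathrm{dist}(z,w)$ for a universal archimedean constant. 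Expanding each monomial of $F$ as a telescoping difference and bounding coefficients by $\|F\|$ then produces the polynomial Lipschitz estimate
\[
\|F(x)-F(y)\|\leq c(K,d,k)\cdot\|F\|\cdot\|x-y\|,
\]
with $c(K,d,k)=1$ non-archimedeanly and a universal archimedean constant otherwise.

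\textbf{Non-archimedean rigidity.} Under the hypothesis $\mathrm{dist}(z,w)\leq |\mathrm{Res}(F)|/\|F\|^{(k+1)d^k}$, combining the previous estimate with $\|F(x)\|\geq|\mathrm{Res}(F)|/\|F\|^{(k+1)d^k-1}$ (which is \eqref{resultant} with $C(K)=1$) yields $\|F(x)-F(y)\|<\|F(x)\|$. Ultrametricity forces $\|F(x)\|=\|F(y)\|$, and dividing by $d$ gives $u_F(z)=u_F(w)$.

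\textbf{Lipschitz bound and main obstacle.} Under the normalization, $u_F(z)-u_F(w)=\tfrac{1}{d}\bigl(\log\|F(x)\|-\log\|F(y)\|\bigr)$ because $\|x\|=\|y\|=1$. I would apply the mean value inequality $|\log a-\log b|\leq|a-b|/\min(a,b)$: the numerator is controlled by the polynomial Lipschitz bound above, while the denominator is bounded below by $|\mathrm{Res}(F)|/(C(K)\|F\|^{(k+1)d^k-1})$ from \eqref{resultant}. This produces a Lipschitz constant of order $\|F\|^{(k+1)d^k}/(d|\mathrm{Res}(F)|)$ times $\mathrm{dist}(z,w)$. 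Since the ratio $\|F\|^{(k+1)d^k}/|\mathrm{Res}(F)|$ is bounded below by a $(K,d,k)$-constant (the MacCaulay resultant being of degree $(k+1)d^k$ in the coefficients of $F$), one may enlarge $C_2$ so that $C_2^2\|F\|^{(k+1)d^k}/|\mathrm{Res}(F)|\geq e$ and freely insert its logarithm into the bound, recovering exactly the displayed form. The only real work is the careful tracking of the universal constant $C_2(K,d,k)$ so that it uniformly controls the polynomial Lipschitz constant, the normalization loss in passing from $\|x-y\|$ to $\mathrm{dist}(z,w)$, and the resultant constant $C(K)$; the logarithmic factor itself is a deliberate loosening, convenient for the subsequent Hölder exponent computation of Theorem~\ref{tm-Green-Holder-adelic}, where $\alpha_v$ appears in the form $\log d/\log(\cdots)$.
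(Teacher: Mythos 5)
Your plan is structurally sensible — normalize lifts, control $\|F(x)-F(y)\|$ by a polynomial Lipschitz bound, bound $\min(\|F(x)\|,\|F(y)\|)$ from below using \eqref{resultant}, then apply a mean-value estimate for the logarithm — and in the archimedean case it does go through: there $C_2$ can absorb $c(K,d,k)$, $C(K)$, and the chordal-to-affine normalization loss, and the product $C_2^2\|F\|^{(k+1)d^k}/|\mathrm{Res}(F)|$ can then be forced $\geq e$ so that the logarithm is $\geq 1$ and inserting it is harmless. But the last step of your plan, ``the logarithmic factor itself is a deliberate loosening,'' is \emph{false in the non-archimedean case}, and this is a genuine gap. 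Non-archimedeanly the lemma requires $C_2=1$, so the Lipschitz constant you must recover is $\frac{1}{d}\frac{\|F\|^{(k+1)d^k}}{|\mathrm{Res}(F)|}\log\frac{\|F\|^{(k+1)d^k}}{|\mathrm{Res}(F)|}$. When $R:=|\mathrm{Res}(F)|/\|F\|^{(k+1)d^k}$ is close to $1$ this tends to $0$, whereas your mean-value estimate produces the strictly larger constant $\frac{1}{d}\cdot\frac{1}{R}$, which tends to $\frac{1}{d}$. So your log-free bound is \emph{weaker}, not stronger, than the stated inequality in this regime, and you cannot ``insert'' a logarithm that is strictly less than $1$. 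You have no room to enlarge $C_2$ since the lemma demands $C_2=1$ there.

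The fix — and this is where the paper's argument is genuinely different from yours — is to treat the two regimes of $\mathrm{dist}(z,w)$ separately, and to obtain the log factor \emph{structurally} rather than by loosening. For $\mathrm{dist}(z,w)<R$ the ultrametric rigidity you already observed gives $u_F(z)=u_F(w)$ exactly, so nothing to prove. For $\mathrm{dist}(z,w)\geq R$, do \emph{not} use the mean-value inequality: instead use the two-sided bound from \eqref{resultant} directly, which gives $R\leq\frac{\|F(X)\|\|Y\|^d}{\|F(Y)\|\|X\|^d}\leq 1/R$, hence $|u_F(z)-u_F(w)|\leq\frac{1}{d}\log(1/R)$; then divide by $\mathrm{dist}(z,w)\geq R$ to obtain the Lipschitz form $\frac{1}{dR}\log(1/R)\,\mathrm{dist}(z,w)$. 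The logarithm is the price of passing from the sup-oscillation bound to a Lipschitz bound, and it vanishes exactly when the oscillation of $u_F$ vanishes. One minor additional remark: in your rigidity argument the chain of inequalities yields $\|F(x)-F(y)\|\leq\|F(x)\|$, not a strict inequality, so the ultrametric triangle inequality does not quite force $\|F(x)\|=\|F(y)\|$ at the boundary $\mathrm{dist}(z,w)=R$; either strengthen to $<R$ (as the paper's proof in fact does) or note that the boundary case is handled by the Lipschitz bound above.
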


\begin{proof}
Let $x,y\in \mathbb{P}^k(K)$ and $X,Y\in K^{k+1}$ with $\pi(X)=x$ and $\pi(Y)=y$. First, if 
\[\mathrm{dist}(x,y)\geq R:=\frac{|\mathrm{Res}(F)|}{C(K)^2\|F\|^{(k+1)d^k}},\]
the formula \eqref{resultant} gives
\begin{align*}
|u_F(x)-u_F(y)| & =\left|\frac{1}{d}\log\frac{\|F(X)\|\|Y\|^d}{\|F(Y)\|\|X\|^d}\right|\leq \frac{1}{d}\log \left(\frac{1}{R}\right)\\
& \leq \frac{1}{d}\frac{1}{R}\log \left(\frac{1}{R}\right)\cdot\mathrm{dist}(x,y).
\end{align*}
We now assume $\mathrm{dist}(x,y)<R\leq 1$.  Pick $X,Y\in K^{k+1}$ such that $\pi(X)=x$ and $\pi(Y)=y$ with $\|X\|=\|Y\|=1$. If $K$ is non-archimedean, following the proof of \cite[Theorem~13]{}, but we reproduce the argument here for the sake of completeness. By the strong triangle inequality, we can assume $|x_k|=|y_k|=1$. Write 
\[F(X+h)=F(X)+\sum_{j=0}^kh_iB_i(X,h),\]
where $B_i\in K[X,h]$ and its coefficients are linear combinations of coefficients of $F$. Since $|y_k|=1$ and $F$ is homogeneous, we can compute
\begin{align*}
\|F(X)\| & =\|F(x_kY+y_kX-x_kY)\|\\
& =\|F(x_kY)+\sum_{j=0}^k(y_kx_j-x_ky_j)B_j(x_yY,y_kX-x_kY)\|.
\end{align*}
By our assumption, we have $R\leq \frac{\|F(X)\|}{\|F(Y)\|}\leq R^{-1}$ and
\[\left\|\sum_{j=0}^k(y_kx_j-x_ky_j)B_j(x_yY,y_kX-x_kY)\right\|<R\|F(X)\| \leq\|F(x_kY)\|=\|F(Y)\|.\]
This implies $\|F(X)\|=\|F(Y)\|$ and
\begin{align*}
u_F(x)-u_F(y)=\frac{1}{d}\log\frac{\|F(X)\|}{\|F(Y)\|} & =0,
\end{align*}
and the proof is complete in this case.

If $K$ is archimedean, we can assume there is $j$ such that $y_j=1$ and there is $C\geq1$ depending only on $d$ and $k$ such that
\begin{align*}
|x_j|^d\frac{\|F(Y)\|}{\|F(X)\|} & =\frac{\|F(x_jY)\|}{\|F(X)\|}=\frac{\left\|F(X)+D_XF(x_jY-y_jX)\right\|}{\|F(X)\|}\\
& \leq 1+\frac{\|D_XF(x_jY-y_jX)\|}{\|F(X)\|}\leq 1+C \|F\|\frac{\max_\ell|x_jy_\ell-y_jx_\ell|}{\|F(X)\|}\\
& \leq 1+C\frac{1}{R}\mathrm{dist}(x,y).
\end{align*}
Up to increasing $C$ and up to replacing $C(K)^2$ by $C\cdot C(K)^2$ and $R$ by $R/C$, we have $\log(R)\leq -1$, whence, taking the supremum over $j$ gives
\begin{align*}
\frac{\|F(Y)\|}{\|F(X)\|} \leq 1+\frac{1}{R}\log\left(\frac{1}{R}\right)\mathrm{dist}(x,y).
\end{align*}
We now use that $log(1+t)\leq t$ for all $t\geq0$ and get
\[u_F(x)-u_F(y)\leq \frac{1}{d}\frac{1}{R}\log\left(\frac{1}{R}\right)\mathrm{dist}(x,y).\]
We conclude reversing the roles of $x$ and $y$.
\end{proof}

Finally, we control the lipschitz constant of $F$, relying on McCaulay resultant.
\begin{lemma}\label{lm:lipf}
There is a constant $C_3(K,d,k)\geq1$ such that $C_3(K,d,k)=1$ if $K$ is non-archimedean and $C_3(K,d,k)$ depends only on $d$ and $k$ when $K$ is archimedean, and such that for any $F$ as above, and any $x,y\in \mathbb{P}^k(K)$,
\[\mathrm{dist}(f(x),f(y)) \leq C_3(K,d,k)\left(\frac{\|F\|^{(k+1)d^k}}{|\mathrm{Res}(F)|}\right)^2\cdot\mathrm{dist}(x,y).\]
\end{lemma}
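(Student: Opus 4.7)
The plan is to work directly with the formula
\[\mathrm{dist}(f(x),f(y))=\frac{\max_{i,j}|F_i(X)F_j(Y)-F_j(X)F_i(Y)|}{\|F(X)\|\cdot\|F(Y)\|},\]
valid for any lifts $X,Y\in K^{k+1}\setminus\{0\}$ of $x,y$, and to estimate the numerator and the denominator separately. The denominator is the easy piece: applying the MacCaulay lower bound \eqref{resultant} to both $X$ and $Y$ yields
\[\|F(X)\|\cdot\|F(Y)\|\;\ge\;\frac{|\mathrm{Res}(F)|^{2}}{C(K)^{2}\|F\|^{2((k+1)d^{k}-1)}}\,\|X\|^{d}\|Y\|^{d},\]
which already accounts for all the $\|F\|^{(k+1)d^{k}}/|\mathrm{Res}(F)|$ factors once we cancel $\|X\|^d\|Y\|^d$ with the numerator bound.

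The core of the argument is the numerator. For each pair $(i,j)$ the bi-homogeneous polynomial $G_{ij}(X,Y):=F_i(X)F_j(Y)-F_j(X)F_i(Y)$ vanishes whenever $Y$ is proportional to $X$, hence belongs to the ideal generated by the Plücker forms $\Delta_{a,b}(X,Y):=X_aY_b-X_bY_a$. To make this quantitative I would expand $F_i,F_j$ in the monomial basis and decompose each difference $X^{\alpha}Y^{\beta}-X^{\beta}Y^{\alpha}$, for multi-indices $\alpha,\beta$ of length $d$ (viewed as non-decreasing sequences $\alpha(\cdot),\beta(\cdot)$), by the bubble-sort telescoping
\[X^{\alpha}Y^{\beta}-X^{\beta}Y^{\alpha}=\sum_{s=1}^{d} M_{s}(X,Y)\,\Delta_{\alpha(s),\beta(s)}(X,Y),\]
where the $M_{s}$ are monomials of bidegree $(d-1,d-1)$ and the coefficients are integers. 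Feeding this back into $G_{ij}$ and estimating term by term gives
\[|G_{ij}(X,Y)|\;\le\; C(K,d,k)\,\|F\|^{2}\,\|X\|^{d-1}\|Y\|^{d-1}\,\max_{a,b}|\Delta_{a,b}(X,Y)|\;=\;C(K,d,k)\,\|F\|^{2}\,\|X\|^{d}\|Y\|^{d}\,\mathrm{dist}(x,y),\]
where $C(K,d,k)=1$ in the non-archimedean case (by the strong triangle inequality, since the telescoping identity has integer coefficients) and depends only on $d,k$ in the archimedean case (through the number of monomials of degree $d$ in $k+1$ variables).

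Combining the numerator and denominator bounds and cancelling $\|X\|^{d}\|Y\|^{d}$ gives
\[\mathrm{dist}(f(x),f(y))\;\le\;C(K,d,k)\,C(K)^{2}\,\frac{\|F\|^{2(k+1)d^{k}}}{|\mathrm{Res}(F)|^{2}}\,\mathrm{dist}(x,y),\]
which is the claimed inequality with $C_{3}(K,d,k):=C(K,d,k)\,C(K)^{2}$. The only slightly delicate step is justifying the quantitative Plücker decomposition with integer coefficients, so that ultrametricity yields $C_{3}(K,d,k)=1$ in the non-archimedean case; once that explicit telescoping is in hand, the rest of the proof is a routine resultant computation.
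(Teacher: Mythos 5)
Your proposal is correct and follows the same strategy as the paper: estimate the numerator and denominator of the explicit distance formula separately, bounding the denominator via the MacCaulay resultant inequality \eqref{resultant} and the numerator by exhibiting $G_{ij}(X,Y)=F_i(X)F_j(Y)-F_j(X)F_i(Y)$ as an element of the ideal generated by the Pl\"ucker forms $\Delta_{a,b}$. One point worth flagging: the paper asserts that for each $(i,j)$ there is a single pair $(m,\ell)$ such that $\Delta_{m,\ell}$ \emph{divides} $G_{ij}$; this is correct when $k=1$ (the diagonal is then a hypersurface), but it is false for $k\geq 2$ (for instance $F_1=X_1+X_2$, $F_2=X_2+X_3$ in degree $1$ gives $G_{12}=\Delta_{12}+\Delta_{13}+\Delta_{23}$, which no single $\Delta_{m,\ell}$ divides). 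What is actually needed, and what the paper's displayed inequality uses, is precisely what you prove: $G_{ij}$ is a $\Z$-linear combination of the $\Delta_{a,b}$ with monomial coefficients of bidegree $(d-1,d-1)$. Your bubble-sort telescoping $X^{\alpha}Y^{\beta}-X^{\beta}Y^{\alpha}=\sum_{s=1}^{d} M_s\,\Delta_{\alpha(s),\beta(s)}$ with monic monomials $M_s$ supplies a correct quantitative justification, and integrality of the coefficients is exactly what is needed to obtain $C_3(K,d,k)=1$ in the non-archimedean case via the ultrametric inequality. So your proof is not merely equivalent to the paper's; it repairs a small imprecision in the way the key algebraic identity is stated.
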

\begin{proof}
Pick $x,y\in \mathbb{P}^k(K)$. For $X,Y\in K^{k+1}\setminus\{0\}$, if $x=\pi(X)$ and $y=\pi(Y)$, we have
\begin{align*}
\frac{\mathrm{dist}(f(x),f(y))}{\mathrm{dist}(x,y)} & =\frac{\|X\|\cdot\|Y\|\cdot\max_{1\leq i, j\leq k+1}|F_i(X)F_j(Y)-F_i(Y)F_j(X)|}{\|F(X)\|\cdot\|F(Y)\|\cdot\max_{1\leq i, j\leq k+1}|X_iY_j-X_jY_i|}\\
& \leq C(K)\cdot\frac{\|F\|^{2(k+1)d^k-2}}{|\mathrm{Res}(F)|^2}\cdot \frac{\max_{1\leq i, j\leq k+1}|F_i(X)F_j(Y)-F_i(Y)F_j(X)|}{(\|X\|\cdot\|Y\|)^{d-1}\cdot\max_{1\leq i, j\leq k+1}|X_iY_j-X_jY_i|}.
\end{align*}
We now use that for any $i,j$, there is $\ell,m$ such that $X_mY_\ell-X_\ell Y_m$ divides $F_i(X)F_j(Y)-F_i(Y)F_j(X)$ as a polynomial in the variables $X_1,\ldots,X_{k+1},Y_1,\ldots,Y_{k+1}$. Since $F_j$ and $F_i$ are homogeneous of degree $d$, for any $i,j$ we have
\[|F_i(X)F_j(Y)-F_i(Y)F_j(X)|\leq C'(K)\|F\|^2\cdot (\|X\|\cdot\|Y\|)^{d-1}\cdot\max_{m,\ell}|X_mY_\ell-X_\ell Y_m|,\]
where $C'(K)\geq1$ depends only on $d$ and $k$ if $K$ is archimedean and $C'(K)=1$ if $K$ is non-archimedean. This gives the wanted inequality.
\end{proof}

%

\subsection{H\"older regularity of the Green function: proof of Theorem~\ref{tm-Green-Holder-adelic}}

Recall that we defined the Green function of the lift $F$ as $g_F:=\sum_{j\geq0}d^{-j}u_F\circ f^j$.
%
%
The proof consists in first giving a H\"older estimate involving the next three constants depending on $F$, and then estimating those constants. Define
\begin{align*}
\left\{\begin{array}{lll}
\displaystyle C_1(F) & := & \sup_{z\in\mathbb{P}^k(K)}|u_F(z)|,\\
\displaystyle C_2(F) & := & \inf\{L\geq0\, : \ \forall z,w\in\mathbb{P}^k(K), \ |u_F(z)-u_f(w)|\leq L \cdot\mathrm{dist}(z,w)\},\\
C_3(F) & := & \inf\{L\geq0\, : \ \forall z,w\in\mathbb{P}^k(K), \ \mathrm{dist}(f(z),f(w))\leq L \cdot\mathrm{dist}(z,w)\}.
\end{array}
\right.
\end{align*}
We follow a classical argument from complex dynamics (see,~e.g.~\cite{dinhsibony2}): pick $z,w\in\mathbb{P}^k(K)$ and  fix an integer $N\geq1$. First assume $(K,|\cdot|)$ is archimedean. By definition of $g_F$,
\begin{align*}
|g_F(z)-g_F(w)| & \leq \sum_{n\geq0}\frac{1}{d^n}|u_F(f^n(z))-u_F(f^n(w))|\\
& \leq \sum_{n=0}^{N-1}\frac{1}{d^n}|u_F(f^n(z))-u_F(f^n(w))| +2C_1(F)\sum_{n\geq N}d^{-n}\\
& \leq C_2(F)\sum_{n=0}^{N-1}\frac{1}{d^n}\mathrm{dist}
(f^n(z),f^n(w)) +2C_1(F)\frac{d^{-N}}{d-1}\\
& \leq C_2(F)\sum_{n=0}^{N-1}\left(\frac{C_3(F)}{d}\right)^n\mathrm{dist}(z,w) +2C_1(F)\frac{d^{-N}}{d-1}.
\end{align*}
We now use Lemmas~\ref{lm:supuF},~\ref{lm:lipuF} and~\ref{lm:lipf}. We have
 \begin{enumerate}
 \item $C_1(F)\leq  \log\max\{\|F\|,1\}+\log C_4$,
 \item $C_2(F)\leq C_4^2\displaystyle\frac{\|F\|^{(k+1)d^k}}{d|\mathrm{Res}(F)|} 
\log\left(\frac{C_4^2\|F\|^{(k+1)d^k}}{|\mathrm{Res}(F)|}\right)$, and
 \item $C_3(F)\leq C_4\displaystyle \left(\frac{\|F\|^{(k+1)d^k}}{|\mathrm{Res}(F)|}\right)^2$,
 \end{enumerate}
 where $C_4=C_4(K,d,k):=\max\{C_1(K,d,k),C_2(K,d,k),C_3(K,d,k)\}\geq1$ is a constant depending only on $d$ and $k$ when $K$ is archimedean and $C_4=1$ when $K$ is non-archimedean.
We first treat the case where $(K,|\cdot|)$ is archimedean. We then can replace $C_3(F)$ by $\max\{2d,C_3(F)\}\geq d+2$ and the above gives
\[|g_F(z)-g_F(w)| \leq C_2(F)\left(\frac{\max\{2d,C_3(F)\}}{d}\right)^{N}\mathrm{dist}(z,w) +C_1(F)\cdot d^{-N}.\]
This in particular implies 
\begin{align*}
C_2(F)\leq \ & \frac{C_4^2}{d}\displaystyle\max\left\{2d,C_4\left(\frac{\|F\|^{(k+1)d^k}}{|\mathrm{Res}(F)|}\right)^2\right\} \\
 & \ \ \ \times \log\left(C_4\frac{\|F\|^{(k+1)d^k}}{|\mathrm{Res}(F)|}\right),
\end{align*}
so that the above rewrites as
 \begin{align*}
 |g_F(z)-g_F(w)|\leq \  2C_4^2 &\log\max\left(C_4\frac{\|F\|^{(k+1)d^k}}{|\mathrm{Res}(F)|}\right)\\
 & \times \left(\frac{C_4}{d}\cdot\max\left\{2d,\frac{\|F\|^{2(k+1)d^k}}{|\mathrm{Res}(F)|^2}\right\}\right)^{N+1}\\ 
& \ \ \ \ \times \mathrm{dist}(z,w) + \frac{\log^+\|F\|+\log C_4}{d^N}.
 \end{align*}
Recall that $\mathrm{dist}(z,w)\leq 1$ by definition.
We now choose $N\geq0$ so that
\[N+1\geq -\log \mathrm{dist}(z,w)/\log\left(C_4\max\left\{2d,\frac{\|F\|^{2(k+1)d^k}}{|\mathrm{Res}(F)|^2}\right\}\right)\geq N.\] 
Then $d^{-(N+1)}\leq \mathrm{dist}(z,w)^{\log d/\log\left(C_4\max\left\{2d,\frac{\|F\|^{2(k+1)d^k}}{|\mathrm{Res}(F)|^2}\right\}\right)}$ and the above gives
 \begin{align*}
 |g_F(z)- & g_F(w)|\leq  C_4^2 \left(2\log\left(\frac{\|F\|^{(k+1)d^k}}{|\mathrm{Res}(F)|}\right)+2d\log C_4^2+d\log^+\|F\|\right) \times \mathrm{dist}(z,w)^\alpha,
 \end{align*}
 where $\alpha=\log d/\left(\log C_4+\log\max\left\{2d,\frac{\|F\|^{2(k+1)d^k}}{|\mathrm{Res}(F)|^2}\right\}\right)$, ending the proof in this case.
 
 \bigskip
 
\par\noindent We now assume $(K,|\cdot|)$ is non-archimedean. Then we have
 \begin{enumerate}
 \item $C_1(F)\leq  \log ^+\|F\|$,
 \item $C_2(F)\leq \displaystyle\frac{\|F\|^{(k+1)d^k}}{d|\mathrm{Res}(F)|} 
\log\left(\frac{\|F\|^{(k+1)d^k}}{|\mathrm{Res}(F)|}\right)$, and
 \item $C_3(F)\leq\displaystyle \left(\frac{\|F\|^{(k+1)d^k}}{|\mathrm{Res}(F)|}\right)^2$,
 \end{enumerate}
Let $R:=|\mathrm{Res}(F)|/\|F\|^{(k+1)d^k}\leq 1$. By Lemma~\ref{lm:lipuF} and by definition of $g_F$, if $R=1$, then $g_F\equiv0$ and we can set $\alpha:=1$.
We thus assume $R<1$ and pick $x,y\in\mathbb{P}^k(K)$. If $\mathrm{dist}(f^{N}(x),f^{N}(y))\leq R^2\leq R\leq 1$ for any $N\geq0$, we have $g_F(x)=g_F$ by construction and Lemma~\ref{lm:lipuF}. In particular, this applies if $x=y$. Otherwise, there is $N$ minimal such that $\mathrm{dist}(f^{N}(x),f^{N}(y))>R$. As $C_3(F)\leq R^{-2}$ and $N$ is minimal, this implies 
$R^2\leq R^{-2N}\mathrm{dist}(x,y)$ and $R^2>R^{-2(N-1)}\mathrm{dist}(x,y)$, i.e. 
\[R^{2(N+1)}\leq \mathrm{dist}(x,y)\leq R^{2N}.\]
By Lemma~\ref{lm:lipuF}, we deduce that
\begin{align*}
|g_F(z)-g_F(w)| & \leq\sum_{n\geq0}\frac{1}{d^n}|u_F(f^n(z))-u_F(f^n(w))|\\
& \leq\sum_{n\geq N}\frac{1}{d^n}|u_F(f^n(z))-u_F(f^n(w))|\\
& \leq 2C_1(F)\sum_{n\geq N}d^{-n}\leq 2C_1(F)\frac{d^{-N}}{d-1}.
\end{align*}
By definition of $N$, one has $2N\leq \frac{\log \mathrm{dist}(x,y)}{\log R}\leq 2(N+1)$ and the above gives
\[|g_F(x)-g_F(y)|\leq 2C_1(F)\frac{d^{-\frac{\log\mathrm{dist}(x,y)}{2\log R}}}{d-1}=2C_1(F)\frac{\mathrm{dist}(x,y)^{-\frac{\log d}{2\log R}}}{d-1}.\]
By definition of $R$, this concludes the proof.

\section{H\"older estimates and an inequality for the mutual energy}

\subsection{A height estimate for H\"older adelic line bundle}
As above, we use the notation $L=\mathcal{O}_{\mathbb{P}^1}(1)$.
We say $\bar{L}=(L,\{\|\cdot\|_v\}_{v\in M_\mathbb{K}})$ is an \emph{adelically H\"older line bundle} if $\bar{L}$ is semi-positive continuous line bundle in the sense of Zhang and if there exists 
\begin{itemize}
\item an adelic constant $C:=\{C_v\}_{v\in M_\mathbb{K}}$ with $C_v\geq e$ at archimedean places, and
\item an adelic constant $\alpha:=\{\alpha_v\}_{v\in M_\mathbb{K}}$ which is small at archimedean places,
\end{itemize}
such that for any $v\in M_\mathbb{K}$, if $g_v:=\log\|\cdot\|_v/\|\cdot\|_{v,0}$,then any $z,w\in \mathbb{P}^1(\mathbb{C}_v)$, we have
\[|g_v(z)-g_v(w)|\leq \log(C_v) \mathrm{dist}_v(z,w)^{\alpha_v}.\]
Here $\mathrm{dist}_v$ denotes the chordal distance on $\mathbb{P}_{\mathbb{C}_v}^1$ defined in \S\ref{sec:basics}.

\medskip

We prove here the following.

\begin{theorem}\label{tm:holder}
 Assume $\bar{L}$ is an adelically H\"older metrized line bundle with adelic constants $C=\{C_v\}_{v\in M_\mathbb{K}}$ and $\alpha=\{\alpha_v\}_{v\in M_\mathbb{K}}$. Let $\varepsilon=\{\varepsilon_v\}_{v\in M_\mathbb{K}}$ be a small adelic constant and let $E\subset \mathbb{A}^1(\bar{\mathbb{K}})$ be a $\mathrm{Gal}(\bar{\mathbb{K}}/\mathbb{K})$-invariant set. Then
\begin{align*}
h_{\bar{L}}(\mathbb{P}^1)\leq & \sum_{v\in M_\mathbb{K}}\frac{N_v}{[\mathbb{K}:\mathbb{Q}]}\left(2\log(C_v)\varepsilon_v^{\alpha_v}-\frac{\log(\varepsilon_v)}{2\# E}\right)+\frac{1}{\# E}\cdot\sum_{x\in E}h_{\bar{L}}(x).
\end{align*}
\end{theorem}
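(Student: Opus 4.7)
My strategy is to apply the arithmetic Hodge Index Theorem (Theorem~\ref{tm:Hodge}) to the integrable difference $\bar L-\bar L_\varepsilon$, where $\bar L_\varepsilon$ is a new semi-positive continuous adelic metric on $L=\mathcal O(1)$ built by a $v$-adic smoothing of the discrete probability measure $\nu_v^E:=\frac{1}{\#E}\sum_{x\in E}\delta_x$ at scale $\varepsilon_v$. This follows in spirit the Favre--Rivera-Letelier approximation argument used in \cite{DKY1,DKY2,FRL}, adapted to the present adelic Hölder setting.

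\textbf{Construction of $\bar L_\varepsilon$.} For each place $v\in M_\mathbb K$, replace each Dirac $\delta_x$ ($x\in E$) by a natural model probability measure $\rho_{x,\varepsilon_v}$ supported in the $v$-adic ball of radius $\varepsilon_v$ around $x$: normalized Haar measure on the sphere $\partial B_v(x,\varepsilon_v)$ at archimedean places, and the Dirac mass at the Gauss point of $B_v(x,\varepsilon_v)$ at non-archimedean places. Set $\mu_{v,\varepsilon}:=\frac{1}{\#E}\sum_{x\in E}\rho_{x,\varepsilon_v}$, a probability measure of mass $1=\deg L$ with continuous potential on $\mathbb P^{1,\mathrm{an}}_v$. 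Since $\varepsilon_v=1$ for all but finitely many $v$, the $\mu_{v,\varepsilon}$'s assemble into a semi-positive continuous adelic metric $\bar L_\varepsilon$ on $L$ with $c_1(\bar L_\varepsilon)_v=\mu_{v,\varepsilon}$ (agreeing with the naive/Fubini--Study metric at almost every place).

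\textbf{Hodge and decomposition.} The difference $\bar L-\bar L_\varepsilon$ is an integrable adelic line bundle on the trivial bundle $\mathcal O_{\mathbb P^1}$, so Theorem~\ref{tm:Hodge} yields $(\bar L-\bar L_\varepsilon)^2\leq 0$, equivalent to
\[
h_{\bar L}(\mathbb P^1)\leq \frac{(\bar L\cdot\bar L_\varepsilon)}{[\mathbb K:\mathbb Q]}-\frac{(\bar L_\varepsilon)^2}{2[\mathbb K:\mathbb Q]}.
\]
Now I expand both intersection numbers using the intersection formula from \S\ref{sec:basics} applied to a global section $s\in H^0(\mathbb P^1,L^{\otimes\#E})$ with $\mathrm{div}(s)=D:=\sum_{x\in E}[x]$. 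This produces four $v$-adic integrals of $\log\|s\|_v^{-1}$ against $\mu_{\bar L,v}$ and $\mu_{v,\varepsilon}$ (with $\|\cdot\|_v$ computed both in $\bar L$ and in $\bar L_\varepsilon$). Combining with $(\bar L|_D)=[\mathbb K:\mathbb Q]\sum_{x\in E}h_{\bar L}(x)$ and collecting terms, one obtains
\[
h_{\bar L}(\mathbb P^1) \leq h_{\bar L}(E) + (\text{H\"older cross-errors}) + (\text{self-energy correction}).
\]

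\textbf{Estimating the errors and main obstacle.} The adelic Hölder hypothesis on $\bar L$ gives
\[
\Bigl|\int g_v^{\bar L}\,d(\mu_{v,\varepsilon}-\nu_v^E)\Bigr|\leq \log(C_v)\,\varepsilon_v^{\alpha_v},
\]
since $\rho_{x,\varepsilon_v}$ sits within $v$-adic distance $\varepsilon_v$ of $x$; this replacement occurs twice in the decomposition (once in the cross-term $(\bar L\cdot\bar L_\varepsilon)$ and once when passing from $h_{\bar L_\varepsilon}(E)$ to $h_{\bar L}(E)$), producing the coefficient $2\log(C_v)\varepsilon_v^{\alpha_v}$. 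The self-energy contribution is
\[
\tfrac{1}{2}\iint\log|z-w|_v^{-1}\,d\mu_{v,\varepsilon}(z)\,d\mu_{v,\varepsilon}(w) = -\tfrac{\log\varepsilon_v}{2\#E}+O(1),
\]
where the diagonal pairings $\rho_{x,\varepsilon_v}\otimes\rho_{x,\varepsilon_v}$ each contribute $-\log\varepsilon_v$, with the $\frac{1}{(\#E)^2}\cdot\#E=\frac{1}{\#E}$ normalization and the factor $\frac12$ from $(\bar L_\varepsilon)^2/2$ in the Hodge expansion, while the off-diagonal cross-terms are bounded and absorbable into $h_{\bar L}(E)$ via Zhang's inequality (Lemma~\ref{ineg-Zhang-below}) applied to $\bar L_\varepsilon$. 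The principal obstacle is precisely this bookkeeping: carefully expanding the four $v$-adic integrals produced by the intersection formula, pinning down the exact constants $2$ and $\frac{1}{2\#E}$, and confirming that the off-diagonal self-energy cross-terms are actually absorbed by positivity rather than appearing uncontrollably.
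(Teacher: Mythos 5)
Your framework matches the paper's exactly: smooth the Dirac masses at adelic scale $\varepsilon_v$ (the paper's potential is $g_{\varepsilon,v}(z)=\frac{1}{\#E}\sum_{x\in E}\log\max\{|z-x|_v,\varepsilon_v\}$, whose curvature is precisely your $\mu_{v,\varepsilon}$), then apply Theorem~\ref{tm:Hodge} to $\bar L-\bar L_\varepsilon$ to get $(\bar L)^2\leq2(\bar L\cdot\bar L_\varepsilon)-(\bar L_\varepsilon)^2$. The Hölder bound on the cross-term is also right. But two pieces of your bookkeeping as written do not work.

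The off-diagonal contribution $\frac{1}{(\#E)^2}\sum_{x\neq y}\log|x-y|_v$ to $(\bar L_\varepsilon)^2$ is \emph{not} ``bounded and absorbable via Zhang's inequality'': at a fixed place it can be arbitrarily negative when two points of $E$ are $v$-adically close. The correct mechanism is global cancellation via the product formula, $\sum_v N_v\log|x-y|_v=0$ for $x\neq y$ in $E$; combined with the local lower bound $\int g_{\varepsilon,v}\,\ddc g_{\varepsilon,v}\geq\frac{1}{\#E}\log\varepsilon_v+\frac{1}{(\#E)^2}\sum_{x\neq y}\log|x-y|_v$ from Fili and Favre--Rivera-Letelier, this is exactly the content of the paper's Lemma~\ref{lm:energy}, and without it your ``$O(1)$'' is not controlled. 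Second, the coefficient $2\log(C_v)\varepsilon_v^{\alpha_v}$ does not arise from ``replacing $h_{\bar L_\varepsilon}(E)$ by $h_{\bar L}(E)$'': the Hölder hypothesis concerns $g_v$ at nearby points of $\mathbb P^1$, not the comparison of $g_{\varepsilon,v}(x)$ with $g_v(x)$. In the paper, $(\bar L\cdot\bar L_\varepsilon)$ is computed with a section $s$ of $L$ vanishing only at $\infty$, so $(\bar L_\varepsilon|_{\mathrm{div}(s)})=0$, and $\int\log\|s\|_v^{-1}c_1(\bar L_\varepsilon)_v$ splits into the height term $\frac{1}{\#E}\sum_x\log\|s(x)\|_v^{-1}$ plus a Hölder error $\frac{1}{\#E}\sum_x\int(g_v(x)-g_v)\,\delta_{\zeta_{x,\varepsilon_v}}\leq\log(C_v)\varepsilon_v^{\alpha_v}$ plus a naive-metric error bounded by $\varepsilon_v$ at archimedean $v$ and $0$ otherwise (Lemma~\ref{lm:facile}), which is $\leq\log(C_v)\varepsilon_v^{\alpha_v}$ because $C_v\geq e$ and $\alpha_v\leq1$. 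Your alternative choice of a section of $L^{\otimes\#E}$ with divisor $\sum_{x\in E}[x]$ also forces you to evaluate $\bar L_\varepsilon$ at the very points of $E$, re-introducing a $\frac{\log\varepsilon_v}{\#E}$ term into $(\bar L_\varepsilon|_D)$ and re-entangling the self-energy you were trying to separate; the paper's choice of divisor $[\infty]$ avoids this.
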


\begin{proof}
Let $s\in H^0(\mathbb{P}^1,L)$ be non-zero with $\mathrm{div}(s)=[\infty]$. We let $\bar{L}_0:=(L,\{\|\cdot\|_{v,0}\}_{v\in M_\mathbb{K}})$ be the standard (naive) adelic metrization on $L$. 

For a given a finite $\mathrm{Gal}(\bar{\mathbb{K}}/\mathbb{K})-$invariant set $E\subset \mathbb{A}^1(\bar{\mathbb{K}})$ and a given small adelic constant $\varepsilon:=\{\varepsilon_v\}_{v\in M_{\mathbb{K}}}$, define a metrization $\{\|\cdot\|_{\varepsilon_v,v}\}_{v\in M_\mathbb{K}}$ on $L$ as follows: on $\mathbb{A}^1_{\mathbb{C}_v}$, let 
\[g_{\varepsilon,v}(z):=\frac{1}{\# E}\sum_{x\in E}\log\max\{|z-x|_v,\varepsilon_v\}, \quad z\in\mathbb{A}^1(\mathbb{C}_v).\]
The function $g_{\varepsilon,v}$ satisfies $g_{\varepsilon,v}(z)=\log|z|_v$ for all $z\in \mathbb{A}^1(\mathbb{C}_v)$ with $|z|_v$ large enough. In particular,
if for for any degree one polynomial function $\sigma$, we set
\[\|\sigma(z)\|_{\varepsilon,v}:=|\sigma(z)|_ve^{-g_{\varepsilon,v}(z)}, \quad z\in\mathbb{A}^1(\mathbb{C}_v),\]
we define a metric $\|\cdot\|_{\varepsilon,v}$ on $L$ which is continuous. Denote by $\bar{L}_\varepsilon$ the adelic line bundle $\bar{L}_\varepsilon:=(L,\{\|\cdot\|_{\varepsilon,v}\}_{v\in M_\mathbb{K}})$.
The adelic line bundle $\bar{L}-\bar{L}_\varepsilon$ is an integrable line bundle which underlying line bundle is the trivial bundle $\mathcal{O}_{\mathbb{P}^1}$. We thus can apply the Arithmetic Hodge Index Theorem of Yuan and Zhang (see Theorem~\ref{tm:Hodge}): we have
\begin{align}
(\bar{L})^2-2(\bar{L}\cdot\bar{L}_\varepsilon)+(\bar{L}_\varepsilon)^2=(\bar{L}-\bar{L}_\varepsilon)^2\leq0.\label{ineg-Hodge-0}
\end{align}
We can easily deduce the following from \cite{FRL} and \cite{Fili}:
\begin{lemma}\label{lm:energy}
We have $(\bar{L}_\varepsilon)^2\geq \displaystyle\frac{1}{\# E}\sum_{v\in M_\mathbb{K}}N_v\log(\varepsilon_v)$.
\end{lemma}
Take Lemma~\ref{lm:energy} for granted. Combining \eqref{ineg-Hodge-0} with Lemma~\ref{lm:energy}, we find
\begin{align}
(\bar{L})^2\leq 2(\bar{L}\cdot\bar{L}_\varepsilon)-(\bar{L}_\varepsilon)^2\leq 2(\bar{L}\cdot\bar{L}_\varepsilon)-\frac{1}{\# E}\sum_{v\in M_\mathbb{K}}N_v\log(\varepsilon_v).\label{ineg:Arithmetic-Hodge}
\end{align}
To conclude the proof, we need to estimate $(\bar{L}\cdot\bar{L}_\varepsilon)$ from above.
Up to replacing $\mathbb{K}$ by an extension $\mathbb{L}$ and to normalizing the computations by $1/[\mathbb{L}:\mathbb{K}]$, we can assume $E\subset \mathbb{A}^1(\mathbb{K})$.
As above, let $s$ be the section with $\mathrm{div}(s)=[\infty]$. Then 
\[(\bar{L}\cdot\bar{L}_\varepsilon)=(\bar{L}_\varepsilon|\mathrm{div}(s))+\sum_{v\in M_\mathbb{K}}N_v\int_{\mathbb{A}_v^{1,\mathrm{an}}}\log\|s\|_v^{-1}c_1(\bar{L}_\varepsilon)_v.\]
By construction, $(\bar{L}_\varepsilon|\mathrm{div}(s))=(\bar{L}_0|\mathrm{div}(s))=0$ and, as $s$ does not vanish on $E$, we have
\[\frac{[\mathbb{K}:\mathbb{Q}]}{\# E}\sum_{x\in E}h_{\bar{L}}(x)=\frac{1}{\# E}\sum_{x\in E}\sum_{v\in M_\mathbb{K}}N_v\log\|s(x)\|_v^{-1}.\]
By definition of $\bar{L}_\varepsilon$,  for any $v\in M_\mathbb{K}$ one can write 
$c_1(\bar{L}_\varepsilon)_v=\frac{1}{\# E}\sum_{x\in E}\delta_{\zeta_{x,\varepsilon_v}}$ as measures on $\mathbb{A}^{1,\mathrm{an}}_v$. In particular, if we let $\zeta_{x,\varepsilon_v}$ be point of $\mathbb{A}^{1,\mathrm{an}}_v$ corresponding to the seminorm $\sup_{\bar{B}(x,\varepsilon_v)}|\cdot|$ on $\C_v[T]$, we find
\begin{align*}
\int_{\mathbb{A}_v^{1,\mathrm{an}}}\log\|s\|_v^{-1}c_1(\bar{L}_\varepsilon)_v & =\frac{1}{\# E}\sum_{x\in E}\int_{\mathbb{A}_v^{1,\mathrm{an}}}\log\|s\|_v^{-1}\delta_{\zeta_{x,\varepsilon_v}}\\
& = \frac{1}{\# E}\sum_{x\in E}\int_{\mathbb{A}_v^{1,\mathrm{an}}}\log\left(\frac{\|s(x)\|_v}{\|s\|_v}\right)\delta_{\zeta_{x,\varepsilon_v}} +\frac{1}{\# E}\sum_{x\in E}\log\|s(x)\|_v^{-1}.
\end{align*}
Denote as above $g_v=\log\|\cdot\|_v/\|\cdot\|_{v,0}$. The function $g_v$ satisfies 
\[|g_v(z)-g_v(w)|\leq \log(C_v)\mathrm{dist}_v(z,w)^{\alpha_v}\leq \log(C_v)\|z-w\|_v^{\alpha_v}.\]
for all $z,w\in \mathbb{A}^{1}(\mathbb{C}_v)$. The above gives
\begin{align*}
\int_{\mathbb{A}_v^{1,\mathrm{an}}}\log\|s\|_v^{-1}c_1(\bar{L}_\varepsilon)_v & = \frac{1}{\# E}\sum_{x\in E}\int_{\mathbb{A}_v^{1,\mathrm{an}}}(g_v(x)-g_v)\delta_{\zeta_{x,\varepsilon_v}}\\
& \quad +\frac{1}{\# E}\sum_{x\in E}\int_{\mathbb{A}_v^{1,\mathrm{an}}}\log\left(\frac{\|s(x)\|_{v,0}}{\|s\|_{v,0}}\right)\delta_{\zeta_{x,\varepsilon_v}}\\
& \quad \quad +\frac{1}{\# E}\sum_{x\in E}\log\|s(x)\|_v^{-1}.
\end{align*}
By assumption, we deduce
\begin{align*}
\int_{\mathbb{A}_v^{1,\mathrm{an}}}\log\|s\|_v^{-1}c_1(\bar{L}_\varepsilon)_v & \leq \log(C_v)\varepsilon_v^{\alpha_v}+\frac{[\mathbb{K}:\mathbb{Q}]}{\# E}\sum_{x\in E}\log\|s(x)\|_v^{-1}\\
& \quad +\frac{1}{\# E}\sum_{x\in E}\int_{\mathbb{A}_v^{1,\mathrm{an}}}\log\left(\frac{\|s(x)\|_{v,0}}{\|s\|_{v,0}}\right)\delta_{\zeta_{x,\varepsilon_v}}.
\end{align*}
An easy computation gives the next lemma
\begin{lemma}\label{lm:facile}
$\displaystyle\int_{\mathbb{A}_v^{1,\mathrm{an}}}\log\left(\frac{\|s(x)\|_{v,0}}{\|s\|_{v,0}}\right)\delta_{\zeta_{x,\varepsilon_v}}
 \leq \left\{\begin{array}{ll}
\varepsilon & \text{if} \ v \ \text{is archimedean},\\
0 & \text{if} \ v \ \text{is non-archimedean},
\end{array}\right.$
\end{lemma}

For $v$ archimedean, since $C_v\geq e$ and $\alpha_v\leq 1$, we have $\varepsilon_v\leq \log(C_v)\varepsilon_v^{\alpha_v}$. Pick again $v\in M_\mathbb{K}$. 
This implies
\begin{align}
\int_{\mathbb{A}_v^{1,\mathrm{an}}}\log\|s\|_v^{-1}c_1(\bar{L}_\varepsilon)_v & \leq 2\log(C_v)\varepsilon_v^{\alpha_v}+\frac{[\mathbb{K}:\mathbb{Q}]}{\# E}\sum_{x\in E}\log\|s(x)\|_v^{-1}.\label{ineg:place-by-place}
\end{align}
Since $h_{\bar{L}}(\mathbb{P}^1)=\frac{(\bar{L})^2}{2[\mathbb{K}:\mathbb{Q}]}$, the conclusion follows from summing \eqref{ineg:place-by-place} over all places $v\in M_\mathbb{K}$ and using \eqref{ineg:Arithmetic-Hodge}.
\end{proof}

\begin{proof}[Proof of Lemma~\ref{lm:energy}]
By definition, we have
\[(\bar{L}_\varepsilon^2)=(\bar{L}_\varepsilon| \mathrm{div}(s))+\sum_{v\in M_\mathbb{K}}N_v\int_{\mathbb{A}^{1,\mathrm{an}}_v}\log\|\cdot\|^{-1}_{\varepsilon,v}c_1(\bar{L}_\varepsilon)_v,\]
where $s$ is a section of $L$ with divisor $[\infty]$.
By construction of $\bar{L}_\varepsilon$, we have $(\bar{L}_\varepsilon|C\cap H_\infty)=(\bar{L}_0|C\cap H_\infty)=0$. Pick now $v\in M_\mathbb{K}$, then $c_1(\bar{L}_\varepsilon)_v=dd^c(g_{\varepsilon,v})$, so that
\begin{align*}
(\bar{L}_\varepsilon^2) & =(\bar{L}_0|C\cap H_\infty)+\sum_{v\in M_\mathbb{K}}N_v\int_{C^{\mathrm{an}}_v}g_{\varepsilon,v}dd^cg_{\varepsilon,v}.
\end{align*}
Fix $v\in M_\mathbb{K}$. Lemma~12 from \cite{Fili} and lemma~4.11 from \cite{FRL} rewrite as
\begin{align*}
\int_{C^{\mathrm{an}}_v}g_{\varepsilon,v}dd^cg_{\varepsilon,v}\geq \frac{1}{\# E}\log(\varepsilon_v)+\frac{1}{(\# E)^2}\sum_{x\neq y\in E}\log|x-y|_v
\end{align*}
and the product formula implies 
\[\sum_{v\in M_\mathbb{K}}N_v\log|x-y|_v=0.\]
This concludes the proof.
\end{proof}

\begin{proof}[Proof of Lemma~\ref{lm:facile}]
By the triangle inequality,
\begin{align*}
\int_{\mathbb{A}_v^{1,\mathrm{an}}}\log\left(\frac{\|s(x)\|_{v,0}}{\|s\|_{v,0}}\right)\delta_{\zeta_{x,\varepsilon_v}}
& \leq \left\{\begin{array}{ll}
\log^+\left(\|z-x\|_v+\|x\|_v\right)-\log^+\|x\|_v & \text{if} \ v \ \text{archimedean},\\
\log^+\max\{\|z-x\|_v,\|x\|_v\}-\log^+\|x\|_v & \text{otherwise},
\end{array}\right.
\end{align*}
For $z$ lying in the support of $dd^c_z\log\max\{\|z-x\|_v,\varepsilon_v\}$, this gives
\begin{align*}
\log\left(\frac{\|s(x)\|_{v,0}}{\|s(z)\|_{v,0}}\right) & 
 \leq \left\{\begin{array}{ll}
\log(1+\varepsilon_v)\leq \varepsilon_v & \text{if} \ v \ \text{is archimedean},\\
0 & \text{if} \ v \ \text{is non-archimedean},
\end{array}\right.
\end{align*}
and the proof is complete.
\end{proof}

\subsection{The mutual energy - height of points relation: proof of Theorem~\ref{tm:split}}\label{sec:split}

Pick integers  $d_1,d_2\geq2$, pick rational maps $f_1\in \mathrm{Rat}_{d_1}(\bar{\mathbb{Q}})$ and $f_2\in \mathrm{Rat}_{d_2}(\bar{\mathbb{Q}})$ and let $\mathbb{K}$ be a number field such that $f_1$ and $f_2$ are both defined over $\mathbb{K}$.

Let $\bar{L}:=\frac{1}{2}(\bar{L}_{f_1}+\bar{L}_{f_2})$, where $\bar{L}_{f_i}$ is the canonical metric of $f_i$ on $\mathcal{O}_{\mathbb{P}^1}(1)$. 

\medskip

Fix $i=1,2$ and apply Theorem~\ref{tm-Green-Holder-adelic}: Given a polynomial lift $F_i$ of $f_i$, defined over a number field $\mathbb{K}$, we have $\mathrm{Res}(F_i)\in \mathbb{K}^\times$. Up to replacing $\mathbb{K}$ with an extension, we can assume there is $\alpha_i\in\mathbb{K}^\times$ such that $\mathrm{Res}(F_i)=\alpha^{2d_i}_i$. We thus may replace $F_i$ by $\alpha_i^{-1}F_i$ to get $\mathrm{Res}(F_i)=1$. For a given $v\in M_\mathbb{K}$, if $X,Y\in \mathbb{A}^{2}(\bar{\mathbb{Q}})\setminus\{0\}$ with $\|X\|_v=\|Y\|_v=1$, by \eqref{resultant}, we have
\[\frac{C_v(d_i)}{\|F_i\|_v^{2d_i}}\leq \frac{\|F_i(X)\|}{\|F_i(Y)\|}\leq C_v(d_i)\|F_i\|_v^{2d_i},\]
with $C_v(d_i)=1$ if $v$ is non-archimedean and $C_v(d_i)\geq1$ depending only on $d$ if $v$ is archimedean. In particular, we have $\|F_i\|_v\geq1$ if $v$ is non-archimedean and $\|F_i\|_v\geq 1/\tilde{C}(d_i)$ 
 when $v$ is non-archimedean, where $\tilde{C}(d_i)^{2d_i}=C(d_i)$.
 
For any $v\in M_\mathbb{K}$, we denote by $u_{F,v}$, $g_{F,v}$ and $\|F\|_v$ the objects defined in Section~\ref{sec:Holder} using the $v$-adic norm.

We let $C_v(F_i)$ and $\alpha_v(F_i)$ be the constants of the H\"older property of $\bar{L}_{f_i}$ at place $v$. By Theorem~\ref{tm-Green-Holder-adelic}, if $v$ is archimedean we have
\[\left\{\begin{array}{ll}
C_v(F_i)=C_3\left(C_{1}+(2d_i+1)\log\|F_i\|_v\right),  &   \text{and} \\ 
\alpha_v(F_i)^{-1}=\frac{1}{\log d_i}\left( C_{2}+\max\{\log(2d_i),2d_i\log\|F_i\|_v\}\right),&
\end{array}\right.\]
where $C_{1},C_{2},C_3\geq1$ depend only on $d$ and, when $v$ is non-archimedean,
\[\left\{\begin{array}{ll}
C_v(F_i)=C_3(2d_i+1)\log\|F_i\|_v,  &   \text{and} \\ 
\alpha_v(F_i)^{-1}=\left\{\begin{array}{ll}
1 & \text{if} \ \|F_i\|_v=1,\\
4d_i\log\|F_i\|_v/\log (d_i) & \text{otherwise}.
\end{array}\right.
&\end{array}\right.\]
For any $v\in M_\mathbb{K}$ with $C_v(F_i)=0$, we have $\alpha_v(F_i)=1$ and $g_{F_i,v}\equiv0$ in this case.

\medskip

By this discussion, the adelic line bundle $\bar{L}$ is adelically H\"older with constants $\{C_v(\bar{L})\}_{v\in M_\mathbb{K}}$ and $\{\alpha_v(\bar{L})\}_{v\in M_\mathbb{K}}$ satisfying
\[\left\{\begin{array}{ll}
1\leq C_v(\bar{L})\leq C_3\left(C_{1}+(2d_1+1)\log\|F_1\|_v+(2d_2+1)\log\|F_2\|_v\right),  &   \text{and} \\ 
\alpha_v(\bar{L})^{-1}\leq\sum_{i=1}^2 \frac{1}{\log d_i}\left( C_{2}+\max\{\log(2d_i),2d_i\log\|F_i\|_v\}\right),&
\end{array}\right.\]
where $C_{1},C_{2},C_3\geq1$ depend only on $d$ when $v$ is archimedean, and
\[\left\{\begin{array}{ll}
1\leq  C_v(\bar{L})\leq C_3((2d_1+1)\log\|F_1\|_v+(2d_2+1)\log\|F_2\|_v),  &   \text{and} \\ 
\alpha_v(\bar{L})^{-1}\leq \left\{\begin{array}{ll}
1 & \text{if} \ \|F_1\|_v=\|F_2\|_v=1,\\
\sum_{i=1}^24d_i\log\|F_i\|_v/\log (d_i) & \text{otherwise},
\end{array}\right.
&\end{array}\right.\]
when $v$ is non-archimedean.

\medskip

Choose a finite Galois-invariant subset $E\subset\mathbb{A}^1(\bar{\mathbb{Q}})$ and a small adelic constant $\{\varepsilon_v\}_{v\in M_\mathbb{K}}$. Combined with Theorem~\ref{tm:holder} and Lemma~\ref{lm:heightpairing}, the above gives
\begin{align}
\langle f_1,f_2\rangle\leq & \frac{1}{\# E}\sum_{x\in E}(\hat{h}_{f_1}(x)+\hat{h}_{f_2}(x))+ \sum_{v\in M_\mathbb{K}}\frac{N_v}{[\mathbb{K}:\mathbb{Q}]}\left(4\log C_v(\bar{L})\varepsilon_v^{\alpha_v(\bar{L})}-\frac{\log(\varepsilon_v)}{\# E}\right).\label{almost-there}
\end{align}
We now choose the small adelic constant $\{\varepsilon_v\}_{v\in M_\mathbb{K}}$.
Fix $0<\delta<1$ and $v\in M_\mathbb{K}$ and set
\[\varepsilon_v:=\left\{\begin{array}{ll}
\delta^{1/\alpha_v(\bar{L})} & \text{if} \ \log C_v(\bar{L})> 0,\\
1 & \text{otherwise}.
\end{array}\right.\]
To conclude the proof, we use \eqref{almost-there} and the choice of $\{\varepsilon_v\}_v$ to get:
\begin{align*}
\mathscr{E}  := &\langle f_1,f_2\rangle -\frac{1}{\# E}\sum_{x\in E}(\hat{h}_{f_1}(x)+\hat{h}_{f_1}(x))\\
& \leq \sum_{v\in M_\mathbb{K}}\frac{N_v}{[\mathbb{K}:\mathbb{Q}]}\left(4C_v(\bar{L})\varepsilon_v^{\alpha_v(\bar{L})}-\frac{\log(\varepsilon_v)}{2\# E}\right)\\
& \leq \sum_{v\in M_\mathbb{K}}\frac{N_v}{[\mathbb{K}:\mathbb{Q}]}4C_v(\bar{L})\delta-\sum_{v\in M_\mathbb{K}, \  C_v(\bar{L})\neq 0}\frac{N_v}{[\mathbb{L}:\mathbb{Q}]}\frac{\log(\delta)}{\alpha_v(\bar{L})\# E}.
\end{align*}
Let $M_\mathbb{K}^\infty$ denote the set of archimedean places of $\mathbb{K}$. The above implies
\begin{align*}
\sum_{v\in M_\mathbb{K}}\frac{N_v}{[\mathbb{K}:\mathbb{Q}]}4C_v(\bar{L}) \leq & A\sum_{v\in M_\mathbb{K}}\frac{N_v}{[\mathbb{L}:\mathbb{Q}]}(\log\|F_1\|_v+\log\|F_2\|_v)\\
&+4C_3\sum_{v\in M_\mathbb{K}^\infty}\frac{N_v}{[\mathbb{K}:\mathbb{Q}]}\log(C_{1})\\
\leq & A\cdot\left( h_{\mathrm{Rat}_{d_1}}(f_1)+h_{\mathrm{Rat}_{d_2}}(f_2)\right)+B,
\end{align*}
where $A=4C_3(d_1+d_2+1)$ and $B:=4C_3\log(C_{1})$. Similarly, there are $A_1\geq A$ anb $B_1\geq B$ depending only on $d_1$ and $d_2$ such that
\begin{align*}
\sum_{v\in M_\mathbb{K}\atop C_v(\bar{L})\neq 0}\frac{N_v}{[\mathbb{K}:\mathbb{Q}]}\frac{1}{\alpha_v(\bar{L})} \leq & \sum_{v\in M_\mathbb{K}\atop C_v(\bar{L})\neq 0}\frac{N_v}{[\mathbb{K}:\mathbb{Q}]}A_1\cdot (\log\|F_1\|_v+\log\|F_2\|_v)\\
&+B_1\sum_{v\in M_\mathbb{K}^\infty}\frac{N_v}{[\mathbb{K}:\mathbb{Q}]}\log(C_{2})\\
\leq & A_1\cdot\left( h_{\mathrm{Rat}_{d_1}}(f_1)+h_{\mathrm{Rat}_{d_2}}(f_2)\right)+B',
\end{align*}
where $B'=B_1\log(C_{2})$. This concludes the proof.

\section{A current and a measure on parameter spaces}

\subsection{The complex pairing and a bifurcation current}\label{sec:mubif}
For the whole section, we fix two integers $d_1,d_2\geq2$ and we consider families of pairs of rational maps $(f_{1,t},f_{2,t})$ of degrees $d_1$ and $d_2$ parametrized by a complex quasi-projective variety $S$. Such a family is given by a morphism
\[\mathcal{F}:S\times \mathbb{P}^1\times\mathbb{P}^1\longrightarrow S\times \mathbb{P}^1\times\mathbb{P}^1\]
such that, for any $t\in S$, $(f_{1,t},f_{2,t})=\mathcal{F}(t,\cdot,\cdot)$ is a pair of rational maps of respective degrees $d_1$ and $d_2$. In what follows, we denote by 
\begin{itemize}
\item $\pi:S\times\mathbb{P}^1\times\mathbb{P}^1\to S$ the canonical projection and 
\item $\pi_i:S\times\mathbb{P}^1\times\mathbb{P}^1\to S\times\mathbb{P}^1$ the projection onto $S$ and the $i$-th factor $\mathbb{P}^1$.
\end{itemize}
For $i=1,2$, following \cite{GV_Northcott}, we let $\widehat{T}_{\mathfrak{f}_i}$ is the fibered Green current of the family $\mathfrak{f}_i:S\times \mathbb{P}^1\to S\times\mathbb{P}^1$ of degree $d$ rational maps induced by $\mathcal{F}$, we let $\widehat{T}_i:=\pi_i^*(\widehat{T}_{\mathfrak{f}_i})$. Let also $\Delta\subset\mathbb{P}^1\times\mathbb{P}^1$ be the diagonal. As in \cite{DeMarco_Mavraki_2024} we define:
\begin{definition}
The \emph{pairwise-bifurcation current} of the family $\mathcal{F}$ is
\[T_{\mathcal{F},\Delta}:=\pi_*\left(\left(\widehat{T}_{1}+\widehat{T}_{2}\right)^{\wedge2}\wedge[S(\C)\times \Delta]\right).\]
\end{definition}

 We also define a function $U:S(\C)\longrightarrow\mathbb{R}_+$ by
\[U(f_{1,t},f_{2,t}):=(\mu_{f_{1,t}}-\mu_{f_{2,t}},\mu_{f_{1,t}}-\mu_{f_{2,t}}), \quad t\in S(\mathbb{C}).\]
\begin{lemma}\label{lm:potentialmu}
The function $U$ is plurisubharmonic and continuous and satisfies
\[T_{\mathcal{F},\Delta}=2\ddc U.\]
\end{lemma}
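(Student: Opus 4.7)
The plan is to compute $\ddc_t U$ via the Bedford--Taylor identity $(\ddc\Phi)^{\wedge 2}=\ddc(\Phi\,\ddc\Phi)$ applied to a suitable continuous psh potential $\Phi$, and to match the result with $T_{\mathcal F,\Delta}$ after push-forward. The key input is joint continuity (in fact Hölder regularity) of the dynamical Green function in the parameter, which is Proposition~\ref{lm:Holder-explicit}. First I shrink $S$ to a simply connected open $V\subseteq S$ on which holomorphic lifts $F_{i,t}\colon\mathbb{A}^2\to\mathbb{A}^2$ of $f_{i,t}$ exist, and set
\[\phi_i(t,z):=g_{F_{i,t}}(z,1)-\tfrac12\log(1+|z|^2),\]
extended continuously across $\infty$, normalized fiberwise so that $\int_{\mathbb{P}^1}\phi_i(t,\cdot)\,d\omega_{FS}=0$. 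By Proposition~\ref{lm:Holder-explicit}, $\phi_i$ is continuous on $V\times\mathbb{P}^1(\C)$, plurisubharmonic in $(t,z)$ as a decreasing limit of psh iterates, and satisfies $\widehat{T}_{\mathfrak f_i}|_V=\omega_{FS}+\ddc_{(t,z)}\phi_i$, where $\omega_{FS}$ denotes the pull-back of the Fubini--Study form from the $\mathbb{P}^1$-factor.

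Next, I rewrite $U$ as a fiberwise Dirichlet energy. With $\Phi:=\phi_1-\phi_2$ continuous on $V\times\mathbb{P}^1(\C)$ and $\mu_{1,t}-\mu_{2,t}=\ddc_z\Phi(t,\cdot)$ of total mass zero, the Fubini identity on $\mathbb{P}^1$ gives
\[U(t)=\int_{\mathbb{P}^1(\C)}\Phi(t,\cdot)\,d\bigl(\mu_{1,t}-\mu_{2,t}\bigr)=\int_{\mathbb{P}^1(\C)}\Phi\,\ddc_z\Phi.\]
Continuity of $U$ on $S(\C)$ then follows from joint continuity of $\Phi$ and weak-$*$ continuity of $t\mapsto\mu_{i,t}$, whose continuous potentials depend uniformly on $t$.

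For the identity $T_{\mathcal F,\Delta}=2\ddc U$, I restrict $\widehat{T}_i=\pi_i^*\widehat{T}_{\mathfrak f_i}$ to the diagonal $S\times\Delta\simeq S\times\mathbb{P}^1$: this restriction equals $\widehat{T}_{\mathfrak f_i}$ on $S\times\mathbb{P}^1$, so the definition reduces to $\pi_*\bigl((\widehat{T}_{\mathfrak f_1}+\widehat{T}_{\mathfrak f_2})^{\wedge 2}\bigr)$ along $\pi\colon S\times\mathbb{P}^1\to S$. Expanding $\widehat{T}_{\mathfrak f_i}=\omega_{FS}+\ddc\phi_i$, the terms involving $\omega_{FS}^{\wedge 2}$ vanish on the surface $S\times\mathbb{P}^1$ since $\omega_{FS}$ is pulled back from the $1$-dimensional fiber, and the cross terms $\omega_{FS}\wedge\ddc\phi_i$ are annihilated by the push-forward thanks to the normalization $\int\phi_i\,d\omega_{FS}=0$. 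The computation thus reduces to $\pi_*(\ddc\phi_i\wedge\ddc\phi_j)$; by the Bedford--Taylor identity $\ddc\phi_i\wedge\ddc\phi_j=\ddc(\phi_i\,\ddc\phi_j)$ (valid by continuity of the $\phi_i$'s) and commutation of $\ddc$ with the proper push-forward,
\[\pi_*(\ddc\phi_i\wedge\ddc\phi_j)=\ddc_t\!\int_{\mathbb{P}^1(\C)}\phi_i\,d\mu_{j,t}.\]
Reassembling the bilinear pieces according to the symmetric expansion of $(\widehat{T}_{\mathfrak f_1}+\widehat{T}_{\mathfrak f_2})^{\wedge 2}$, and invoking the bilinearity of the mutual energy, produces $2\ddc_t U(t)$. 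Plurisubharmonicity of $U$ is then immediate: $T_{\mathcal F,\Delta}$ is a positive $(1,1)$-current on $S(\C)$ as the proper push-forward of a closed positive $(2,2)$-current (wedge square of closed positive currents with continuous potentials) restricted to a smooth subvariety, so $\ddc U=\tfrac12 T_{\mathcal F,\Delta}\geq 0$ combines with the continuity shown above to give the claim.

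The main obstacle is the bookkeeping in the third paragraph: one has to verify that, after the $\omega_{FS}$-contributions are killed by the normalization, the full expansion of $(\widehat{T}_{\mathfrak f_1}+\widehat{T}_{\mathfrak f_2})^{\wedge 2}$ regroups through the Bedford--Taylor formula into precisely twice the Dirichlet energy of $\Phi=\phi_1-\phi_2$, and not some other linear combination of the pairings $(\mu_{i,t},\mu_{j,t})$.
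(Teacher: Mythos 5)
Your overall strategy mirrors the paper's: localize to a simply connected chart where lifts exist, write $U$ as a fiberwise Dirichlet energy via the FRL formula, and compute $\pi_*\bigl((\widehat{T}_1+\widehat{T}_2)^{\wedge 2}\wedge[S\times\Delta]\bigr)$ by expanding the Green currents and pushing forward with Bedford--Taylor and slicing. The continuity argument is also the same in spirit, though you invoke Proposition~\ref{lm:Holder-explicit} where the paper simply cites joint continuity of $(t,z)\mapsto g_{f_{i,t}}(z)$. The main difference is the normalization of the potentials: you normalize $\int\phi_i\,\omega_{FS}=0$, while the paper normalizes $\int g_{F_{i,t}}\,d\mu_{f_{i,t}}=0$. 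This is what makes your bookkeeping come out differently.

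The concrete gap is exactly in the ``regrouping'' you flag as the main obstacle, and it does not close the way you hope. After discarding $\omega_{FS}^{\wedge 2}$ and $\omega_{FS}\wedge\ddc\phi_i$ by your normalization, the push-forward of the remaining terms is
\[
\pi_*\bigl((\ddc\phi_1+\ddc\phi_2)^{\wedge 2}\bigr)=\ddc_t\left(\int\phi_1\,d\mu_{1,t}+2\int\phi_1\,d\mu_{2,t}+\int\phi_2\,d\mu_{2,t}\right),
\]
whereas the Dirichlet energy of $\Phi=\phi_1-\phi_2$ is
\[
U(t)=-\int\Phi\,\ddc_z\Phi=-\int\phi_1\,d\mu_{1,t}+2\int\phi_1\,d\mu_{2,t}-\int\phi_2\,d\mu_{2,t}.
\]
The two ``diagonal'' contributions $\int\phi_i\,d\mu_{i,t}$ enter with opposite signs, so the expansion does not regroup into $2\ddc_t U$ as claimed. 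Your normalization $\int\phi_i\,\omega_{FS}=0$ kills the $\omega_{FS}\wedge\ddc\phi_i$ terms but does nothing to make $\ddc_t\int\phi_i\,d\mu_{i,t}$ vanish; this quantity is $\pi_*(\widehat{T}_{\mathfrak f_i}^{\wedge 2})$, which is precisely the term the paper disposes of by asserting $\widehat{T}_{\mathfrak f_i}^{\wedge 2}=0$ (an input you never invoke). Without that vanishing (or some equivalent argument showing the diagonal contributions are pluriharmonic in $t$), your computation produces a different linear combination of the pairings $(\mu_{i,t},\mu_{j,t})$ than the one appearing in $U$, so the identity $T_{\mathcal F,\Delta}=2\ddc U$ is not established. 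You correctly suspected this was the weak point; the fix is to justify the vanishing of the diagonal squared terms (as the paper does, or by a cohomological/Stokes argument on $S\times\mathbb{P}^1$), not to rely on the normalization of $\phi_i$ against $\omega_{FS}$ alone.
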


%

\begin{proof}
Let $\Omega\subset S(\mathbb{C})$ be a small simply connected open subset. This defines two analytic families of rational maps $f_{1,t}$ and $f_{2,t}$ of degree $d_1$ and $d_2$ respectively. Up to reducing $\Omega$, we can define holomorphic families $F_{1,t}$ and $F_{2,t}$ of polynomial homogeneous lifts.
By definition, on has $\mu_{f_{1,t}}-\mu_{f_{2,t}}=\ddc (g_{F_{1,t}}-g_{F_{2,t}})$, so that \cite[\S2]{FRL} gives
\begin{align*}
(\mu_{f_{1,t}}-\mu_{f_{2,t}},\mu_{f_{1,t}}-\mu_{f_{2,t}})& = -\int_{\mathbb{P}^1(\mathbb{C})}(g_{F_{1,t}}-g_{F_{2,t}})\ddc(g_{F_{1,t}}-g_{F_{2,t}}).
\end{align*}
The continuity of $(z,t)\mapsto g_{F_{1,t}}(z)$ and $(z,t)\mapsto g_{F_{2,t}}(z)$ implies that $U$ is continuous on $\Omega$.

We now prove that $U$ is psh and that $\ddc U=\pi_*((\widehat{T}_{1}+\widehat{T}_{2})^{\wedge2}\wedge[S(\C)\times \Delta])$. Let $\omega$ be the standard Fubini-Study form on $\mathbb{P}^1(\mathbb{C})$ and $p_1:S\times\mathbb{P}^1\to S$ and $p_2:S\times\mathbb{P}^1\to\mathbb{P}^1$ be the canonical projections. Recall that $\widehat{T} _i=\lim_n d_i^{-n}(f_i^n)^*(p_i^*\omega)$ and that the local ppotentials converge uniformly. As $(p_i^*\omega)^{\wedge2})=0$ for $i=1,2$, this implies $\widehat{T}_{1}^{\wedge2}=\widehat{T}_{2}^{\wedge2}=0$. Denote by $w(t,z):=g_{F_{1,t}}(z)-g_{F_{2,t}}(z)$ and define a $(1,1)$-current $T_\Omega$ on $\Omega\times \mathbb{P}^1$ by
\[T_\Omega:=- w(\widehat{T}_1-\widehat{T}_2).\]
By definition, since $\widehat{T}_1^2=\widehat{T}_2^2=0$,  we have $\mathrm{dd}^cT_\Omega=\widehat{T}_1\wedge\widehat{T}_2$. By \cite[Proposition 4.3]{BB1},
\[(T_\Omega)_t=-(g_{F_{1,t}}(z)-g_{F_{2,t}}(z))\mathrm{dd}^c(g_{F_{1,t}}-g_{F_{2,t}}),\]
which in turn implies
\[(p_1)_*(T_\Omega)=-\int_{\mathbb{P}^1(\mathbb{C})}(g_{F_{t,1}}-g_{F_{2,t}})\mathrm{dd}^c(g_{F_{1,t}}-g_{F_{2,t}})=(\mu_{f_{1,t}}-\mu_{f_{2,t}},\mu_{f_{1,t}}-\mu_{f_{2,t}}).\]
We thus have $(p_1)_*(T_\Omega)=U$ and for a test form $\psi$ on $\Omega$,
\[\langle\ddc U,\psi\rangle=\int_{\Omega}U\ddc\psi=\int_{\Omega}(p_1)_*(T_\Omega)\ddc\psi=\int_{\Omega}\ddc(p_1)_*(T_\Omega)\wedge\psi,\]
and if $T=(\widehat{T}_{1}+\widehat{T}_{2})^{\wedge2}\wedge[S(\C)\times \Delta]$, we have
\begin{align*}
\langle \pi_*T,\psi\rangle & = \langle T,\pi^*\psi\rangle =2\int_{\Omega}\widehat{T}_{1}\wedge\widehat{T}_2\wedge p_1^*\psi\\
& = 2\int_{\Omega}(p_1)_*(\mathrm{dd}^cT_\Omega)\wedge \psi=2\int_{\Omega}\mathrm{dd}^c(p_1)_*(T_\Omega)\wedge \psi=2\langle\ddc U,\psi\rangle.
\end{align*}
Since $T$ is a positive current, $\ddc U$ is positive. This concludes the proof.
\end{proof}

Assume now that $S=\mathrm{Rat}_{d_1}\times\mathrm{Rat}_{d_2}$. The next lemma is important in what follows.
\begin{lemma}\label{lm:conjug}
For any $(f,g)\in\mathrm{Rat}_{d_1}\times\mathrm{Rat}_{d_2}(\C)$ and any $\varphi\in\mathrm{PGL}(2,\C)$,
\[U(f,g)=U(\varphi^{-1}\circ f\circ\varphi,\varphi^{-1}\circ g\circ\varphi).\]
\end{lemma}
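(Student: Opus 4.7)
My plan is to reduce the invariance to a direct computation using the potential-theoretic formula from the proof of Lemma~\ref{lm:potentialmu}. Fix a linear lift $\Phi\in\mathrm{GL}_2(\mathbb{C})$ of $\varphi$, and homogeneous polynomial lifts $F,G$ of $f,g$. Then $\tilde F:=\Phi^{-1}\circ F\circ \Phi$ and $\tilde G:=\Phi^{-1}\circ G\circ \Phi$ are lifts of $\tilde f:=\varphi^{-1}\circ f\circ \varphi$ and $\tilde g:=\varphi^{-1}\circ g\circ \varphi$. The key auxiliary object is the continuous function $h_\Phi:\mathbb{P}^1(\mathbb{C})\to\mathbb{R}$ defined by $h_\Phi(\pi(x)):=\log\|\Phi(x)\|-\log\|x\|$; it is well-defined by $\mathbb{C}^*$-invariance of the numerator minus denominator.

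First I will establish the transformation identity
\[
g_{\tilde F}(z)=g_F(\varphi(z))+h_\Phi(z),
\]
and the analogous one for $g_{\tilde G}$. Unpacking the definition of $u_F$, a one-line computation gives $u_{\tilde F}(z)=u_F(\varphi(z))+h_\Phi(z)-\frac{1}{d_1}h_\Phi(\tilde f(z))$; substituting into $g_{\tilde F}=\sum_{n\geq 0}d_1^{-n}u_{\tilde F}\circ\tilde f^n$ and using the intertwining $\varphi\circ \tilde f^n=f^n\circ\varphi$, the $h_\Phi$-terms telescope and leave exactly $g_F\circ\varphi+h_\Phi$.

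Next I will apply the identity from the proof of Lemma~\ref{lm:potentialmu}: with lifts normalized so that $\int g_F\, d\mu_f=\int g_G\, d\mu_g=0$, one has $U(f,g)=\int g_F\, d\mu_g+\int g_G\, d\mu_f$. The conjugated lifts $\tilde F,\tilde G$ will not in general satisfy the corresponding normalization; setting $c_1:=\int h_\Phi\, d\mu_{\tilde f}$ and $c_2:=\int h_\Phi\, d\mu_{\tilde g}$, I rescale $\tilde F,\tilde G$ by appropriate scalars so that their Green functions become $g_{\tilde F}-c_1$ and $g_{\tilde G}-c_2$, which then vanish against $\mu_{\tilde f}$ and $\mu_{\tilde g}$ respectively. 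Using the standard conjugation identity $\mu_{\tilde f}=(\varphi^{-1})_*\mu_f$ and $\mu_{\tilde g}=(\varphi^{-1})_*\mu_g$ (coming from $J_{\tilde f}=\varphi^{-1}(J_f)$), I get $\int (g_F\circ\varphi)\, d\mu_{\tilde g}=\int g_F\, d\mu_g$ and similarly for $g_G$. Substituting into the formula yields
\[
U(\tilde f,\tilde g)=\int g_F\, d\mu_g+(c_2-c_1)+\int g_G\, d\mu_f+(c_1-c_2)=U(f,g).
\]

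No step presents a genuine analytic obstacle: the content is the bookkeeping of normalizations. The natural conjugated lift $\Phi^{-1}F\Phi$ does not satisfy the normalization required by Lemma~\ref{lm:potentialmu}, and the necessary scalar correction introduces constants $c_1,c_2$ whose cancellation in pairs is precisely the reason the identity holds. Equivalently, this reflects the $\mathrm{PGL}(2,\mathbb{C})$-invariance of the logarithmic energy pairing $(\rho,\nu)=-\iint\log|z-w|\,d\rho(z)\,d\nu(w)$ on signed measures of mass zero, which one could alternatively check directly via $\varphi^{-1}(\zeta)-\varphi^{-1}(\eta)=(ad-bc)(\zeta-\eta)/[(c\zeta+d)(c\eta+d)]$ and the fact that $\mu_f,\mu_g$ are non-atomic with continuous logarithmic potentials.
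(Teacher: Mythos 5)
Your proof is correct, and the underlying idea is the same as the paper's: both proceed by pulling the conjugation through the Green functions and the measures (in effect, $\mu_{\varphi^{-1}f\varphi}=\varphi^*\mu_f$) and then computing. The difference is which realization of the pairing you apply. The paper writes $(\mu_{f^\varphi}-\mu_{g^\varphi},\mu_{f^\varphi}-\mu_{g^\varphi}) = -\int (g_F\circ\varphi-g_G\circ\varphi)\,\ddc(g_F\circ\varphi-g_G\circ\varphi)$ and applies the change-of-variable formula under $\varphi$; since $\ddc(g_F-g_G)$ has total mass zero, this expression is insensitive to the additive normalization of the Green functions, so the whole argument is a single line. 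You instead invoke the cross-term formula $U(f,g)=\int g_F\,d\mu_g+\int g_G\,d\mu_f$, which is only valid for normalized lifts, and therefore must (a) derive the transformation rule $g_{\tilde F}=g_F\circ\varphi+h_\Phi$ by telescoping, and (b) introduce the correction constants $c_1,c_2$ and check they cancel. Both arguments are legitimate; yours is a bit longer but makes the normalization bookkeeping explicit, while the paper's choice of formula makes that bookkeeping evaporate. Your closing remark that the invariance is really that of the logarithmic energy under M\"obius maps acting on mass-zero measures is exactly the abstract content of the paper's one-line change-of-variable step.
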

\begin{proof}
Pick $(f,g)\in \mathrm{Rat}_{d_1}\times \mathrm{Rat}_{d_2}(\C)$ and $\varphi\in \mathrm{PGL}(2,\C)$. We use the notations $f^\varphi:=\varphi^{-1}\circ f \circ \varphi$ and  $g^\varphi:=\varphi^{-1}\circ g \circ \varphi$ in this proof. By construction, we have
\begin{align*}
\mu_{f^\varphi}=\lim_{n\to\infty}d^{-n}((f^\varphi)^n)^*\omega=\lim_{n\to\infty}d^{-n}\varphi^*(f^n)^*((\varphi^{-1})^*\omega)=\varphi^*\mu_f.
\end{align*}
In particular, for any choice of lifts $F$ of $f$ and $G$ of $g$ respectively, we have $\mu_{f^\varphi}=\varphi^*(\omega)+\ddc (g_F\circ \varphi)$ and $\mu_{g^\varphi}=\varphi^*(\omega)+\ddc (g_G\circ \varphi)$, whence $\mu_{f^\varphi}-\mu_{g^\varphi}=\ddc (g_F\circ \varphi-g_G\circ \varphi)$. This in turn implies
\begin{align*}
(\mu_{f^\varphi}-\mu_{g^\varphi},\mu_{f^\varphi}-\mu_{g^\varphi}) &= -\int_{\mathbb{P}^1(\mathbb{C})}\varphi^*\left((g_{F}-g_{G})\ddc(g_{F}-g_{G})\right)\\
&= - \int_{\mathbb{P}^1(\mathbb{C})}(g_{F}-g_{G})\ddc(g_{F}-g_{G})\\
& =(\mu_f-\mu_g,\mu_f-\mu_g),
\end{align*}
by the change of variable formula.
\end{proof}

\subsection{The measure on the quotient space}

Pick two integers $d_1,d_2\geq2$. Define $X_{d_1,d_2}:=(\mathrm{Rat}_{d_1}\times \mathrm{Rat}_{d_2})/\mathrm{PGL}(2)$, where the action of $\mathrm{PGL}(2)$ is by simultaneous conjugacy. The variety $X_{d_1,d_2}$ is qusiprojective, irreducible of dimension $2(d_1+d_2)-1$ and defined over $\mathbb{Q}$, see \cite{DeMarco_Mavraki_2024} for more details on the construction of such a space. As the function $U$ is $\mathrm{PGL}(2)$-invariant, it descends to a function
\[u:X_{d_1,d_2}(\C)\longrightarrow\mathbb{R}_+\]
which is psh and continuous on $X_{d_1,d_2}(\mathbb{C})$.
\begin{definition}
The \emph{bifurcation measure} of the family $X_{d_1,d_2}$ is
\[\mu_{d_1,d_2}:=(\ddc u)^{\wedge(2(d_1+d_2)-1)}.\]
\end{definition}
Denote by $\Pi:\mathrm{Rat}_{d-1}\times\mathrm{Rat}_{d_2}\to X_{d_1,d_2}$ the canonical projection. Let also 
\[\mathcal{F}:\mathrm{Rat}_{d_1}\times\mathrm{Rat}_{d_2}\times \mathbb{P}^1\times\mathbb{P}^1\to\mathrm{Rat}_{d_1}\times\mathrm{Rat}_{d_2}\times \mathbb{P}^1\times\mathbb{P}^1\]
be the universal family of pairs of rational maps.

\begin{lemma}\label{lm:munonzero}
For any $d_1,d_2\geq2$, the measure $\mu_{d_1,d_2}$ is non-zero and satisfies
\[\Pi^*(\mu_{d_1,d_2})=\left(\frac{1}{2}T_{\mathcal{F},\Delta}\right)^{\wedge(2(d_1+d_2)-1)}.\]
\end{lemma}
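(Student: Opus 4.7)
The pullback identity is essentially formal. By Lemma~\ref{lm:conjug}, $U$ is invariant under the simultaneous conjugation action of $\mathrm{PGL}(2)$ on $\mathrm{Rat}_{d_1}\times\mathrm{Rat}_{d_2}$, so it descends to a continuous plurisubharmonic function $u$ on $X_{d_1,d_2}$ with $U=u\circ\Pi$. Over the stable locus (outside a proper Zariski-closed subset of $\mathrm{Rat}_{d_1}\times\mathrm{Rat}_{d_2}$, which is negligible for our currents) the projection $\Pi$ is a smooth submersion, so Bedford--Taylor theory for continuous psh functions gives $\Pi^*(dd^c u)=dd^c U$ and, by iterating, $\Pi^*\bigl((dd^c u)^{\wedge N}\bigr)=(dd^c U)^{\wedge N}$ with $N=2(d_1+d_2)-1$. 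Combining this with $dd^c U=\tfrac12 T_{\mathcal F,\Delta}$ from Lemma~\ref{lm:potentialmu} yields the stated formula.

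For the non-vanishing statement, the plan is a case analysis on the arithmetic relation between $d_1$ and $d_2$. When $d_1=d_2$, this is exactly the result of DeMarco and Mavraki~\cite{DeMarco_Mavraki_2024}. When $d_1$ and $d_2$ are multiplicatively dependent, say $d_1=m^a$ and $d_2=m^b$, I would use the invariance $\mu_{f^k}=\mu_f$ to transfer the DeMarco--Mavraki non-vanishing on $X_{m^{ab},m^{ab}}$ to $X_{d_1,d_2}$: the map $(f_1,f_2)\mapsto (f_1^b,f_2^a)$ pulls back the mutual energy potential $u_{m^{ab},m^{ab}}$ to a positive multiple of $u_{d_1,d_2}$, and the non-triviality of the Monge--Amp\`ere mass transfers along this embedding after checking dimensions. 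In the multiplicatively independent case, Lemma~\ref{lm:samemu}(1) says the bad locus where $u=0$ is contained in the proper Zariski-closed subvariety of pairs conjugate to monomial, Chebyshev or Latt\`es maps on both coordinates, so $u>0$ on a Zariski open set; combined with the explicit archimedean local formula for $\langle f_1,f_2\rangle$ (which controls the growth of $u$ near the boundary of $X_{d_1,d_2}$ and detects the asymmetry coming from $\log d_1/\log d_2 \notin\mathbb{Q}$), one obtains that $u$ is not maximal, hence $(dd^c u)^{\wedge N}\neq 0$.

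The main obstacle is the non-vanishing in the multiplicatively dependent but unequal degree case, because the iterate trick shifts to a parameter space of much larger dimension and one must argue that the measure on the larger space restricts non-trivially to the image; alternatively one argues directly on $X_{d_1,d_2}$ using that the pluripolar exceptional set of Lemma~\ref{lm:samemu}(2) cannot support a measure of dimension $N$. The pullback formula itself is routine once invariance and continuity of $u$ are established, and the multiplicatively independent case reduces to checking that a continuous psh function which is strictly positive on a Zariski open set and has prescribed asymptotic behavior at infinity (governed by the archimedean local Arakelov--Zhang pairing) has non-vanishing top Monge--Amp\`ere mass.
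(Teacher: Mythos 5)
Your treatment of the pullback identity is correct and essentially the same as the paper's: invariance under simultaneous conjugation (Lemma~\ref{lm:conjug}) gives $U=u\circ\Pi$, the identification $\ddc U=\tfrac12 T_{\mathcal{F},\Delta}$ is Lemma~\ref{lm:potentialmu}, and the Monge--Amp\`ere power passes through $\Pi$. The paper does this by a direct computation against test forms rather than invoking Bedford--Taylor on the submersion locus, but both are fine.

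The non-vanishing part is where the real work is, and your proposal has two genuine gaps. First, in the multiplicatively \emph{dependent} case with $d_1\neq d_2$, you correctly set up the iterate map $\iota$ into $X_{D,D}$ and note that $u_{d_1,d_2}=\iota^*u_{D,D}$, but you explicitly flag the jump in dimension as "the main obstacle" without resolving it. This is a real issue: non-vanishing of $(\ddc u_{D,D})^{\wedge\dim X_{D,D}}$ on the ambient space does \emph{not} by itself imply non-vanishing of $(\ddc(u_{D,D}\circ\iota))^{\wedge\dim X_{d_1,d_2}}$ on the image, since a positive top-degree measure on a big variety can certainly vanish on a proper subvariety. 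The paper sidesteps this: since $\iota$ is proper and finite of the correct codimension, it pulls back the \emph{intermediate} power $\tilde T=\pi_*\bigl((\widehat T_{1,D}+\widehat T_{2,D})^{\wedge2}\wedge[X_{D,D}\times\Delta]\bigr)^{\wedge(2(d_1+d_2)-1)}$ (a $(2(d_1+d_2)-1,2(d_1+d_2)-1)$-current on $X_{D,D}$, not the top Monge--Amp\`ere measure) to something proportional to $\mu_{d_1,d_2}$, and then invokes a separate non-vanishing criterion for $\tilde T$ from~\cite{DeMarco_Mavraki_2024} and~\cite{sparsity}. Your plan needs to be replaced by this (or an equivalent) comparison of currents of matching bidegree.

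Second, in the multiplicatively \emph{independent} case your argument is not sound as stated. The claim that $u>0$ on a Zariski open set plus "growth near the boundary" forces non-maximality is not a valid inference: a continuous psh function can be strictly positive outside a hypersurface and still be maximal (e.g.\ $|z_1|^2$ on $\mathbb{C}^n$, $n\geq2$), and you do not explain how the archimedean local pairing rules this out. The paper uses a different, sharper observation from Lemma~\ref{lm:samemu}(1): the zero set of $u$ decomposes into the conjugacy classes of monomial, Chebyshev, and Latt\`es pairs, and the point $t_0$ corresponding to $(T_{d_1},T_{d_2})$ is an \emph{isolated} zero, hence an isolated local minimum of the continuous psh function $u$. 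An isolated local minimum of a continuous psh function always lies in the support of its top Monge--Amp\`ere measure (otherwise $u$ would be maximal near $t_0$, and the comparison principle applied to the constant $\min_{\partial B_\rho}u$ would force $u\geq\min_{\partial B_\rho}u$ on $B_\rho$, contradicting $u(t_0)<\min_{\partial B_\rho}u$). This is the missing idea in your write-up, and it makes the argument completely local and self-contained.
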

%

\begin{proof}
We first compute $\Pi^*(\mu_{d_1,d_2})$: for a test $(3,3)$-form$\psi$ on $V:=\mathrm{Rat}_{d_1}\times\mathrm{Rat}_{d_2}(\C)$,
\begin{align*}
\langle \Pi^*(\mu_{d_1,d_2}),\psi\rangle & =\int_{V}(\ddc u\circ \Pi)^{\wedge(2(d_1+d_2)-1)}\wedge\psi=\int_{V}(\ddc U)^{\wedge(2(d_1+d_2)-1)}\wedge\psi\\
& = \int_{V}\left(\frac{1}{2}T_{\mathcal{F},\Delta}\right)^{\wedge(2(d_1+d_2)-1)}\wedge \psi,
\end{align*} 
where we used Lemma~\ref{lm:conjug} and Lemma~\ref{lm:potentialmu} successively.

We now show $\mu_{d_1,d_2}$ is non-zero. First, assume $d_1$ and $d_2$ are multiplicatively independent. Let $t_0\in X_{d_1,d_2}(\mathbb{C})$ be the equivalence class of the $(T_{d_1},T_{d_2})$, where $T_{d_i}$ is the Chebychev degree $d_i$ polynomial. Note that the point $t_0$ is a smooth point if $X_{d_1,d_2}$. Indeed, as $X_{d_1,d_2}$ is the geometric quotient $(\mathrm{Rat}_{d_1}\times\mathrm{Rat}_{d_2})//\mathrm{PGL}(2)$, the singularities of $X_{d_1,d_2}$ arise from points in $\mathrm{Rat}_{d_1}\times\mathrm{Rat}_{d_2}$ with non-trivial stabilizer, i.e. paire $(f,g)\in \mathrm{Rat}_{d_1}\times\mathrm{Rat}_{d_2}$ with an automorphism $\varphi\in \mathrm{PGL}(2)\setminus\{\mathrm{id}\}$ with $\varphi \circ f=f\circ \varphi$ and $\varphi \circ g=g \circ \varphi$. Now, the group of automorphisms $\psi\in \mathrm{PGL}(2)$ with $\psi\circ T_{d_i}=T_{d_i}\circ \psi$ is reduced to the identity (see~e.g.~\cite[\S~3.1]{book-unlikely}), and we proved that $X_{d_1,d_2}$ is smooth at $t_0$.
By the first point of Lemma~\ref{lm:samemu}, the function $u$ has an isolated minimum at $t_0$. As $u$ is psh continuous and $\mu_{d_1,d_2}=(2\ddc u)^{\wedge(2(d_1+d_2)-1)}$, by Lemma~\ref{lm:potentialmu}, $t_0\in \mathrm{supp}(\mu_{d_1,d_2})$.

We now assume there are $n,m\geq1$ such that $d_1^n=d_2^m$. If $n=m=1$, this is proved in \cite{DeMarco_Mavraki_2024}. We thus assume $nm>1$ and let $D:=d^n_1=d_2^m$. The map
\[\iota:\{(f_1,f_2)\}\in X_{d_1,d_2}\longmapsto \{(f_1^n,f_2^m)\}\in X_{D,D}\]
is proper and finite by \cite{DeMarco-boundary}.  Whence $\mu_{d_1,d_2}$ is proportional to $\iota^*(\tilde T)$, where
\[\tilde{T}:=\pi_*\left(\left(\widehat{T}_{1,D}+\widehat{T}_{2,D}\right)^{\wedge2}\wedge[X_{D,D}\times \Delta]\right)^{\wedge(2(d_1+d_2)-1)}\]
and it vanishes if and only if $\tilde{T}$ does.
By \cite[Theorem~7.1]{DeMarco_Mavraki_2024} and the criterion established in \cite{DeMarco_Mavraki_2024} and \cite{sparsity}, we have $\tilde{T}\neq0$. Indeed, DeMarco and Mavraki provide a family of pairs of rational maps parametrized by a quasiprojective variety $S_D$ such that the canonical projection $p_D:S_D\to X_{D,D}$ is finite-to-one onto a dense Zariski open subset $V\subset X_{D,D}$. Moreover, they show that the pairwise-bifurcation measure $\nu$ of this family is non-zero. As $\nu$ has continuous potential, its support is not contained in a proper subvariety of $S_D$ and $p_D^*(\tilde T)=\nu$, whence $\tilde T$ is non-zero.
The proof is complete.
\end{proof}

\section{Common preperiodic points for rational maps}

\subsection{Height inequalities and uniformity}
In this paragraph, we let $X$ and $S$ be quasi-projective varieties defined over a number field $\mathbb{K}$. We assume $\pi:X\to S$ is a family of curves defined over $\mathbb{K}$, i.e. $\pi$ is a projective and flat morphism with relative dimension $1$ whose fibers are geometrically connected. Pick an embedding $\iota:X\hookrightarrow \mathbb{P}^N\times S$ so that $\iota|_{X_t}$ is an embedding of $X_t$ into $\mathbb{P}^N$. Let $D$ be the divisor $D:=\iota^{-1}(H_\infty)$ and assume that $L=\mathcal{O}(D)$ Let $D_t:=D\cap X_t$ for $t\in S$. The line bundle $L$ is relatively ample and we endow $L$ with an semi-positive continuous metrization $\{\|\cdot\|_v\}_{v\in M_\mathbb{K}}$ and denote $\bar{L}=(L,\{\|\cdot\|_v\}_{v\in M_\mathbb{K}})$. We assume that
\begin{enumerate}
\item[$(a)$] for any $t\in S(\bar{\mathbb{K}})$, the metrized line bundle $\bar{L}_t:=\bar{L}|_{X_t}$ is adelic,
\item[$(b)$] $h_{\bar{L}}\geq0$ on $X(\bar{\mathbb{K}})$.
\end{enumerate}
The following is inspired by DeMarco, Krieger and Ye ~\cite{DKY1,DKY2}.

\begin{lemma}\label{lm:uniform-follows-from-ineg}
Let $B$ be a projective model of $S$ and $M$ be an ample line bundle on $B$. Assume there are constants $C>0$ and $C'\geq0$ such that 
\begin{align}
 h_{\bar{L}_t}(X_t)\geq C h_{B,M}(t)-C', \quad \text{for all} \ t\in S(\bar{\mathbb{K}})\label{fundamental-ineg}
\end{align}
and that there are constant $C_1\geq1$ and $C_2\geq0$ such that for any finite $\mathrm{Gal}(\bar{\mathbb{K}}/\mathbb{K})$-invariant set $E\subset X_t(\bar{\mathbb{K}})\setminus\mathrm{supp}(D_t)$, and any $0<\delta<1$, then
\begin{align}
h_{\bar{L}_t}(X_t)\leq C_1\left(\delta-\frac{\log(\delta)}{\# E}\right)\left(h_{B,M}(t)+C_2\right)+\frac{1}{\# E}\sum_{x\in E}h_{\bar{L}_t}(x).\label{energy-cardinal-ineg}
\end{align}
Then the following properties are equivalent:
\begin{enumerate}
\item there exists $\varepsilon>0$ such that for all $t\in S(\bar{\mathbb{K}})$, we have $h_{\bar{L}_t}(X_t)\geq\varepsilon$,
\item there exist $\epsilon>0$ and an integer $N\geq1$ such that, for all $t\in S(\bar{\mathbb{K}})$, 
\[\# \{ x\in X_t(\bar{\mathbb{K}})\, : \ h_{\bar{L}_t}(x)\leq \epsilon\}\leq N.\]
\end{enumerate}
\end{lemma}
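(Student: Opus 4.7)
The plan is to prove the two implications separately: direction $(2)\Rightarrow(1)$ is immediate from Zhang's inequalities, while $(1)\Rightarrow(2)$ will follow by a contradiction argument combining \eqref{fundamental-ineg} and \eqref{energy-cardinal-ineg} with an appropriate choice of the adelic radius $\delta$.

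First I would prove $(2)\Rightarrow(1)$. Fix $\epsilon>0$ and $N\geq 1$ as in (2). For any $t\in S(\bar{\mathbb{K}})$, let $Z_t\subset X_t$ be the finite Galois-invariant set of closed points with $h_{\bar{L}_t}\leq \epsilon$; since $\#Z_t\leq N$ and $X_t$ is a curve, $Z_t$ is a proper Zariski-closed subset defined over a finite extension of $\mathbb{K}$. By construction $\inf_{x\in(X_t\setminus Z_t)(\bar{\mathbb{K}})}h_{\bar{L}_t}(x)\geq \epsilon$, hence $e(\bar{L}_t)\geq \epsilon$, and Lemma~\ref{ineg-Zhang-below} applied with $k=1$ together with assumption $(b)$ (which ensures $\inf_y h_{\bar{L}_t}(y)\geq 0$) yields $h_{\bar{L}_t}(X_t)\geq e(\bar{L}_t)/2\geq \epsilon/2$. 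Thus (1) holds with $\varepsilon=\epsilon/2$.

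For the converse $(1)\Rightarrow(2)$, suppose $h_{\bar{L}_t}(X_t)\geq \varepsilon$ for every $t\in S(\bar{\mathbb{K}})$ and set $\epsilon:=\varepsilon/2$. Assume for contradiction that (2) fails for this $\epsilon$: there exists a sequence $t_n\in S(\bar{\mathbb{K}})$ and Galois-invariant sets $E_n\subset X_{t_n}(\bar{\mathbb{K}})$ with $\#E_n\to\infty$ and $h_{\bar{L}_{t_n}}(x)\leq \epsilon$ for all $x\in E_n$. Since $\#\mathrm{supp}(D_{t_n})$ is uniformly bounded by the relative degree of $L$, after removing those finitely many points we may still assume $E_n\cap\mathrm{supp}(D_{t_n})=\emptyset$ while $\#E_n\to\infty$. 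Choose $\delta_n:=1/\#E_n$ and set $a_n:=\delta_n-\log(\delta_n)/\#E_n=(1+\log\#E_n)/\#E_n$, so that $a_n\to 0$. Applying \eqref{energy-cardinal-ineg} to $(E_n,\delta_n)$ gives $h_{\bar{L}_{t_n}}(X_{t_n})\leq C_1 a_n(h_{B,M}(t_n)+C_2)+\epsilon$, and combining with the bound $h_{B,M}(t_n)\leq (h_{\bar{L}_{t_n}}(X_{t_n})+C')/C$ derived from \eqref{fundamental-ineg} yields, after rearranging,
\[h_{\bar{L}_{t_n}}(X_{t_n})\left(1-\tfrac{C_1 a_n}{C}\right)\leq C_1 a_n\left(\tfrac{C'}{C}+C_2\right)+\epsilon.\]

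As $a_n\to 0$, the right-hand side converges to $\epsilon=\varepsilon/2$ while the coefficient on the left tends to $1$, so for $n$ large enough one has $h_{\bar{L}_{t_n}}(X_{t_n})<\varepsilon$, contradicting (1). The main obstacle here is precisely the elimination of the parameter-space height $h_{B,M}(t_n)$, which is a priori unbounded as $n$ varies; the crucial point is that \eqref{fundamental-ineg} also provides an upper bound on $h_{B,M}(t)$ in terms of $h_{\bar{L}_t}(X_t)$, which makes the cancellation effective as soon as $C_1 a_n<C$, and this is what allows us to play $\#E_n$ and $\delta_n$ against each other to force the contradiction.
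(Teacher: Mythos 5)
Your proof is correct, and the $(2)\Rightarrow(1)$ direction is essentially identical to the paper's (Zhang's inequality plus nonnegativity of $h_{\bar L}$). The interesting divergence is in $(1)\Rightarrow(2)$. You argue by contradiction with a cardinality-dependent choice $\delta_n = 1/\# E_n$, which makes $a_n=\delta_n-\log(\delta_n)/\#E_n\to 0$, and you then eliminate the parameter height $h_{B,M}(t_n)$ by the upper bound $h_{B,M}(t_n)\le (h_{\bar L_{t_n}}(X_{t_n})+C')/C$ coming from \eqref{fundamental-ineg}; this isolates $h_{\bar L_{t_n}}(X_{t_n})$ on the left with a coefficient tending to $1$, forcing it below $\varepsilon$ for $n$ large. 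The paper instead fixes $\delta$ (either $C/(2C_1)$ or $\varepsilon/4B$) and splits into two regimes according to whether $h_{B,M}(t)$ is large or small, producing in each regime an explicit bound on $\#E$, and hence an explicit value of $N$ in terms of $C,C',C_1,C_2,\varepsilon$ and $\deg_{L_t}(X_t)$. Your argument is shorter and avoids the case split entirely, at the cost of being non-effective: it proves that a bound $N$ exists without exhibiting one. The paper's version buys explicitness, which is in the spirit of the uniformity theme of the article, though the lemma as stated only requires existence, so both are perfectly valid. One small detail you elide but should state: after removing $\mathrm{supp}(D_{t_n})$, the remaining small-height locus is still Galois-invariant because $D_{t_n}$ is defined over $\mathbb{K}$, and a finite Galois-invariant subset $E_n$ of arbitrarily large cardinality exists because Galois orbits of closed points are finite; this is what legitimizes passing from "the set of points with $h\le\epsilon$ has more than $N$ elements" to the existence of a valid $E_n$ to feed into \eqref{energy-cardinal-ineg}.
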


\begin{proof}
The implication $2.$ implies $1.$ follows from Zhang's inequalities: they give 
\[h_{\bar{L}_t}(X_t)\geq \frac{1}{2}\left(e_1(\bar{L}|_{X_t})+\inf_{x\in X_t(\bar{\mathbb{K}})}h_{\bar{L}}(x)\right),\]
if $e_1(\bar{L}|_{X_t})$ is the essential infimum of $h_{\bar{L}}$ on $X_t(\bar{\mathbb{K}})$.
If we assume there exists $\epsilon>0$ and $N\geq1$ such that $\# \{ x\in X_t(\bar{\mathbb{K}})\, : \ h_{\bar{L}_t}(x)\leq \varepsilon\}\leq N$ for all $t\in S(\bar{\mathbb{K}})$, since $h_{\bar{L}}\geq0$, this gives $h_{\bar{L}_t}(X_t)\geq \varepsilon:=\epsilon/2$, for all $t\in S(\bar{\mathbb{K}})$.

\medskip

To prove the converse implication, we may use the inequality \eqref{fundamental-ineg}. We assume there is a finite $\mathrm{Gal}(\bar{\mathbb{K}}/\mathbb{K})$-invariant set $E\subset X_t(\bar{\mathbb{K}})\setminus \mathrm{supp}(D_t)$ which is non-empty and such that $h_{\bar{L}}(x)\leq \varepsilon/2$ for all $x\in E$.  
First, we assume $h_{B,M}(t)$ is large. More precisely, we assume $h_{B,M}(t)\geq 4(C'+CC_2+\varepsilon/2)/C+C_2$. Combining \eqref{fundamental-ineg} with ~\eqref{energy-cardinal-ineg} yields
\[ C h_{B,M}(t)-C'\leq h_{\bar{L}_t}(X_t)\leq C_1\left(\delta-\frac{\log(\delta)}{\# E}\right)\left(h_{B,M}(t)+C_2\right)+\frac{\varepsilon}{2},\]
for any $0<\delta<1$. This rewrites as
\[ C-C_1\delta+C_1\frac{\log(\delta)}{\# E}\leq \frac{C'+CC_2+\varepsilon/2}{h_{B,M}(t)-C_2}.\]
Without loss of generalities, we can assume $C\leq 1$. For $\delta=C/(2C_1)<1$, this gives
\[ \frac{C}{2}+C_1\frac{\log(C/(2C_1))}{\# E}\leq \frac{C'+CC_2+\varepsilon/2}{h_{B,M}(t)-C_2}.\]
If we assume $h_{B,M}(t)\geq 4(C'+CC_2+\varepsilon/2)/C+C_2$, we deduce
\[\frac{C}{4}\leq -C_1\frac{\log(C/(2C_1))}{\# E}=C_1\frac{\log(2C_1/C)}{\# E}.\]
We thus have proved $\# E\leq 4C_1/C\cdot\log(2C_1/C)$.

\medskip

We now assume $h_{B,M}(t)\leq 4(C'+CC_2+\varepsilon/2)/C+C_2$. Then~\eqref{energy-cardinal-ineg} yields
\begin{align*}
\varepsilon\leq h_{\bar{L}_t}(X_t) & \leq C_1\left(\delta-\frac{\log(\delta)}{\# E}\right)\left(h_{B,M}(t)+C_2\right)+\frac{\varepsilon}{2}\\
& \leq  C_1\left(\delta-\frac{\log(\delta)}{\# E}\right)\left(\frac{4(C'+CC_2+\varepsilon/2)}{C}+2C_2\right)+\frac{\varepsilon}{2},
\end{align*}
for any $0<\delta<1$. Letting $B:=C_1(4(C'+CC_2+\varepsilon/2)/C+2C_2)$ and $\delta:=\varepsilon/4B$, we deduce
\[\# E\leq -\frac{\log(\delta)}{\varepsilon/2B-\delta}=\frac{4B\log(4B/\varepsilon)}{\varepsilon}.\]
Taking $N\geq \max\{4B/\varepsilon\cdot \log(4B/\varepsilon), 4C_1/C\cdot\log(2C_1/C)\}+\deg_{L_t}(X_t)$ and letting $\epsilon:=\varepsilon/2$, we conclude the proof.
\end{proof}

\subsection{The fundamental height inequality and equidistribution}
Pick any integers $d_1,d_2\geq2$. We say that a sequence $t_n=(f_n,g_n)\in X_{d_1,d_2}(\bar{\mathbb{Q}})$ is called
\begin{itemize}
\item \emph{generic} if for any strict subvariety $Z\subset X_{d_1,d_2}$ that is defined over $\mathbb{Q}$, there is $n_0$ such that $Z\cap (\mathrm{Gal}\cdot t_n)=\varnothing$ for all $n\geq n_0$.
\item \emph{small} if $\langle f_n,g_n\rangle\to0$, as $n\to\infty$.
\end{itemize}
Inspiring from the polarized case $d_1=d_2$, one can derive the following from \cite{YZ-adelic}.
\begin{theorem}\label{tm:equi}
The following hold:
\begin{enumerate}
\item for any ample divisor $N$ on a projective model of $X_{d_1,d_2}$, there are $C_1,C_2>0$ and Zariski open and dense subset $U\subseteq X_{d_1,d_2}$ such that
\[\langle f_1,f_2\rangle\geq C_1 \cdot h_{N}(t)-C_2, \quad (f_1,f_2)\in t\in U(\bar{\mathbb{Q}}).\]
\item If there is a generic and small sequence $(t_n)_n\in X_{d_1,d_2}(\bar{\mathbb{Q}})$, then the sequence
\[\mu_{t_n}=\frac{1}{\#\mathrm{Gal}\cdot t_n}\sum_{t\in \mathrm{Gal\cdot t_n}}\delta_t\]
converges to $\mu_{d_1,d_2}/\mu_{d_1,d_2}(X_{d_1,d_2}(\mathbb{C}))$ in the weak sense of measures on $X_{d_1,d_2}(\mathbb{C})$.
\end{enumerate}
\end{theorem}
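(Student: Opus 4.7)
The plan is to apply the theory of adelic line bundles on quasi-projective varieties of Yuan and Zhang~\cite{YZ-adelic}: I will construct a nef and big adelic $\mathbb{Q}$-line bundle $\bar{M}$ on $X_{d_1,d_2}$ whose height function is the pairing $\langle\cdot,\cdot\rangle$, and then invoke their arithmetic Siu-type height inequality for part~$(1)$ and their arithmetic equidistribution theorem for part~$(2)$.

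To construct $\bar{M}$, I first work on $S:=\mathrm{Rat}_{d_1}\times\mathrm{Rat}_{d_2}$, where the universal family $\mathcal{F}$ carries the relative semi-positive adelic line bundle $\bar{L}:=\tfrac{1}{2}(\widehat{L}_{\mathfrak{f}_1}+\widehat{L}_{\mathfrak{f}_2})$ built from the fibered canonical metrics. Pushing the fiberwise arithmetic self-intersection $\bar{L}^2$ along the relative $\mathbb{P}^1$-fibers in the Yuan--Zhang framework produces an integrable adelic $\mathbb{Q}$-line bundle $\bar{M}_S$ on $S$ satisfying $h_{\bar{M}_S}(f_1,f_2)=\langle f_1,f_2\rangle$ for all $(f_1,f_2)\in S(\bar{\mathbb{Q}})$ -- this is the relative, line-bundle-level version of Lemma~\ref{lm:heightpairing}. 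By Lemma~\ref{lm:conjug}, $\bar{M}_S$ is $\mathrm{PGL}(2)$-invariant, so it descends to an adelic $\mathbb{Q}$-line bundle $\bar{M}$ on the geometric quotient $X_{d_1,d_2}$.

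Next, I verify that $\bar{M}$ is nef and big. Nefness at the height level is $\langle f_1,f_2\rangle\geq 0$, a consequence of the Arithmetic Hodge Index Theorem~\ref{tm:Hodge}. Bigness amounts to $(\bar{M})^{2(d_1+d_2)}>0$, which, after identifying the top self-intersection number with the total mass of the top archimedean Monge-Amp\`ere measure attached to $\bar{M}$, reduces via Lemmas~\ref{lm:potentialmu} and~\ref{lm:conjug} (applied at the top exterior power, using $\widehat{T}_{\mathfrak{f}_i}^{\wedge 2}=0$ to collapse mixed products) to the non-vanishing of $\mu_{d_1,d_2}$, which is precisely Lemma~\ref{lm:munonzero}. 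With $\bar{M}$ nef and big, statement~$(1)$ is Yuan and Zhang's arithmetic height inequality for nef and big adelic line bundles: there exist a Zariski open dense $U\subseteq X_{d_1,d_2}$ and constants $C_1,C_2>0$ with $\langle f_1,f_2\rangle=h_{\bar{M}}(t)\geq C_1\,h_N(t)-C_2$ on $U(\bar{\mathbb{Q}})$. For statement~$(2)$, their arithmetic equidistribution theorem in the same framework yields weak convergence of the Galois orbit measures $\mu_{t_n}$ of any generic small sequence to the probability measure $c_1(\bar{M})^{\wedge(2(d_1+d_2)-1)}/(\bar{M})^{2(d_1+d_2)}$; by the identification carried out in the bigness step, this is exactly $\mu_{d_1,d_2}/\mu_{d_1,d_2}(X_{d_1,d_2}(\mathbb{C}))$.

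The hardest step will be the geometric set-up underlying the construction of $\bar{M}$: one must extend the fiberwise canonical metric on the non-proper variety $S\times\mathbb{P}^1$ to an honest adelic object on a projective model in the strong sense of Yuan and Zhang, and check that the fiberwise intersection of $\bar{L}^2$ descends to an integrable adelic $\mathbb{Q}$-line bundle on a projective compactification of $X_{d_1,d_2}$ with controlled behaviour at the boundary. Once this is in place, the identification of $c_1(\bar{M})^{\wedge(2(d_1+d_2)-1)}$ with a positive scalar multiple of $\mu_{d_1,d_2}$ is a direct computation iterating the identity $T_{\mathcal{F},\Delta}=2\ddc U$ and using the vanishing $\widehat{T}_{\mathfrak{f}_i}^{\wedge 2}=0$ of the top self-intersections of the individual fibered Green currents.
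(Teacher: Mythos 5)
Your proposal follows essentially the same route as the paper: construct a nef adelic $\mathbb{Q}$-line bundle $\bar M$ on $X_{d_1,d_2}$ via the Yuan--Zhang quasi-projective framework (Deligne pairing of the fiberwise canonical metrics along the relative $\mathbb{P}^1$), identify $h_{\bar M}$ with $\langle\cdot,\cdot\rangle$ and its equilibrium measure with a positive multiple of $\mu_{d_1,d_2}$, deduce non-degeneracy from Lemma~\ref{lm:munonzero}, and invoke the Yuan--Zhang height inequality and equidistribution theorems. This is exactly the paper's argument; the paper phrases the Deligne pairing through $S\times(\mathbb{P}^1)^2$ and the diagonal rather than directly on $S\times\mathbb{P}^1$, but the two set-ups are equivalent after restriction to $\Delta$.

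One small clarification: you attribute the nefness of $\bar M$ to the Arithmetic Hodge Index Theorem via $\langle f_1,f_2\rangle\geq 0$. In the Yuan--Zhang framework, nefness of an adelic line bundle is a structural property (a limit of nef model metrics), not just non-negativity of heights; the paper instead obtains it directly from \cite[Theorems 6.1.1 and 4.1.3]{YZ-adelic}, which say that the canonical metrization of a polarized dynamical system is nef and that Deligne pairing preserves nefness. The Hodge Index Theorem is what gives the non-negativity $\langle f_1,f_2\rangle\geq 0$ (used, e.g., in Lemma~\ref{lm:heightpairing}), but the direction of inference you wrote is backwards. Similarly, what the Yuan--Zhang theory actually requires is ``non-degeneracy'' ($\deg_{\bar M}(X_{d_1,d_2})>0$); for a nef bundle this is what bigness would give, so your usage is morally right but the clean citation is to their non-degeneracy hypothesis and to \cite[Lemma 5.4.4]{YZ-adelic} linking it to the non-vanishing of the equilibrium measure.
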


\begin{proof}
Pick $i=1,2$. By \cite[Theorem 6.1.1]{YZ-adelic}, the metrized line bundle $\bar{L}_{f_i}$ is a nef adelic line bundle on the quasi-projective variety $\mathbb{P}^1_{\mathrm{Rat}_{d_i}}$. Let $S:=\mathrm{Rat}_{d_1}\times \mathrm{Rat}_{d_2}$ and denote by $\bar{L}_i$ the nef adelic line bundle induced by $\bar{L}_{f_i}$ on $\mathbb{P}^1_S$ by pullback and let $\bar{L}:=\pi_1^*\bar{L}_1+\pi_2^*\bar{L}_2$, where $\pi_i:S\times(\mathbb{P}^1)^2\to S\times\mathbb{P}^1$ is as above. Then $\bar{L}$ is a nef adelic line bundle on $S\times(\mathbb{P}^1)^2$ and it induces a nef adelic line bundle on $X_{d_1,d_2}\times (\mathbb{P}^1)^2$. Denote by
\[\bar{M}:=\langle \bar{L}|X_{d_1,d_2}\times \Delta\rangle^2.\]
By \cite[Theorem~4.1.3]{YZ-adelic}, $\bar{M}$ is a nef adelic line bundle on $X_{d_1,d_2}$.
By Lemma~\ref{lm:potentialmu}, the equilibrium measure of the adelic line bundle is proportional to $\mu_{d_1,d_2}$ and is non-zero.
According to \cite[Lemma~5.4.4]{YZ-adelic}, we deduce that $\deg_{\widetilde{M}}(X_{d_1,d_2})>0$, i.e. $\bar{M}$  is non-degenerate in the sense of Yuan and Zhang. 

We thus can apply the second item of Theorem 5.3.5 from \cite{YZ-adelic} to get the existence, for any adelic line bundle $\bar{N}$ on $X_{d_1,d_2}$, of constants $C_1,C_2>0$ and a dense Zariski open subset $U\subseteq X_{d_1,d_2}$ such that
\[h_{\bar{M}}(t)\geq C_1 h_{\bar{N}}(t)-C_2, \quad t\in U(\bar{\mathbb{Q}}).\]
We also can apply Theorem 5.4.3 from \cite{YZ-adelic} to obtain that for any generic sequence $t_n$ with $h_{\bar{M}}(t_n)\to0$, then $\mu_{t_n}$ converges to $\mu_{d_1,d_2}/\mu_{d_1,d_2}(X_{d_1,d_2}(\mathbb{C}))$ in the weak sense of measures on $X_{d_1,d_2}(\mathbb{C})$.

Pick $\phi\in \mathrm{GL}_2(\bar{\mathbb{Q}})$ and let $(f,g)\in (\mathrm{Rat}_{d_1}\times\mathrm{Rat}_{d_2})(\bar{\mathbb{Q}})$. Proceeding as in the proof of Lemma~\ref{lm:conjug}, we find
\[\langle \phi^{-1}\circ f\circ\phi, \phi^{-1}\circ g\circ\phi\rangle=\langle f,g\rangle,\]
so that, by construction, for all $t=\{(f,g)\}\in X_{d_1,d_2}(\bar{\mathbb{Q}})$, define
\[h_{\bar{M}}(t)=h_{\bar{L}}(\Delta)=\frac{(\bar{L}_t^2|\Delta)}{2\deg_{L}(\Delta)}=\frac{((\bar{L}_{f}+\bar{L}_{g})^2|\Delta)}{4}=h_{\bar{L}_{f}+\bar{L}_{g}}(\mathbb{P}^1)=\langle f,g\rangle.\]
This concludes the proof.
\end{proof}
%
%
%
%
%
%
%

\subsection{Proof of Theorem~\ref{tm:uniform}}\label{sec:main_result}
We now conclude the proof of Theorem~\ref{tm:uniform}. Assume there is a Zariski dense sequence $t_n\in X_{d_1,d_2}(\bar{\mathbb{Q}})$ which is small. Up to extracting a subsequence, we can assume $(t_n)$ is actually generic. By the second point of Theorem~\ref{tm:equi}, for any continuous compactly supported function $\varphi\in\mathscr{C}^0_c(X_{d_1,d_2}(\C),\R)$, we have
\begin{align*}
\lim_{n\to\infty}\int_{X_{d_1,d_2}(\C)}\varphi\, \mathrm{d}\mu_{t_n}=\int_{X_{d_1,d_2}(\C)}\varphi \,\mathrm{d}\mu,
\end{align*}
where $\mu=\mu_{d_1,d_2}/\mu_{d_1,d_2}(X_{d_1,d_2}(\C))$ and $\mu_{t_n}=\frac{1}{\#\mathrm{Gal}\cdot t_n}\sum_{t\in \mathrm{Gal\cdot t_n}}\delta_t$. Take $t_0\in \mathrm{supp}(\mu)$ and pick a cut-off function $\psi\in\mathscr{C}^0_c(X_{d_1,d_2}(\C),\R_+)$ such that $0\leq \psi\leq 1$ and such that there is an euclidean neighborhood $\Omega\subset X_{d_1,d_2}(\C)$ with $\psi\equiv1$ on $\Omega$.  Recall that $u$ is the potential of $\mu$ defined in Section~\ref{sec:mubif} and that $u\geq0$ and set $\varphi:=u\cdot \psi$. Then $\varphi\in\mathscr{C}^0_c(X_{d_1,d_2}(\C),\R_+)$ and $\varphi=u$ on $\Omega$. By Lemma~\ref{lm:samemu}, the set $E=\{t\in X_{d_1,d_2}(\mathbb{C})$; $u(t)=0\}$ is contained in a pluripolar set. Now, as we chose $\Omega$ to be a neighborhood of $t_0\in \mathrm{supp}(\mu)$, we have 
\[\mu(\Omega\setminus E)=\mu(\Omega)>0,\]
since $\mu$ has contiunous potentials. Whence
\[\int_{X_{d_1,d_2}(\C)}\varphi \, \mathrm{d}\mu\geq\int_\Omega u \, \mathrm{d}\mu=\int_{\Omega\cap\{u>0\}} u\, \mathrm{d}\mu>0.\]
Now, as we chose $\Omega$ to be a neighborhood of $t_0\in \mathrm{supp}(\mu)$, we have $\mu(\Omega)>0$ and, as $\mu_{t_n}\to\mu$ and $\Omega$ is open, this implies we have $\mu_{t_n}(\Omega)\geq \mu(\Omega)/2>0$ for all $n$ large enough. 
Also, by non-negativity of the local mutual energy pairing (see~\cite{FRL}), we have
\[\int_{X_{d_1,d_2}(\C)}u\, \mathrm{d}\mu_{t_n}\leq\frac{1}{\#\mathrm{Gal}\cdot t_n}\sum\limits_{(f,g)\in\mathrm{Gal}\cdot t_n}\langle f,g\rangle=\langle f_n,g_n\rangle.\]
The hypothesis that the sequence $t_n$ is small implies that
\[\lim_{n\to\infty}\int_{X_{d_1,d_2}(\C)}u \, \mathrm{d}\mu_{t_n}=0.\]
As $0\leq \varphi\leq u$, this implies
\[\int_{X_{d_1,d_2}(\C)}\varphi \, \mathrm{d}\mu=\lim_{n\to\infty}\int_{X_{d_1,d_2}(\C)}\varphi \, \mathrm{d}\mu_{t_n}=0.\]
This is a contradiction and we have proved the existence of $\varepsilon>0$ and of a Zariski open an dense subset $U\subseteq X_{d_1,d_2}$ such that
\[\langle f,g\rangle\geq\varepsilon>0 , \quad (f,g)\in U(\bar{\mathbb{Q}}).\]
The conclusion follows from the combination of Theorem~\ref{tm:split} (combined with Lemma 17 from \cite{ingram-explicit}), Lemma~\ref{lm:heightpairing}, Lemma~\ref{lm:uniform-follows-from-ineg} and the first point of Theorem~\ref{tm:equi}.
\bibliographystyle{short} 
\bibliography{biblio}
\end{document}